\documentclass[a4paper,11pt]{amsart}

\usepackage{amsmath}
\usepackage{amssymb}
\usepackage{amsthm}
\usepackage{tikz}
\usepackage{verbatim}
\usepackage[justification=centering]{caption}

\textwidth 16cm
\oddsidemargin 0cm
\evensidemargin 0cm

\newtheorem{theorem}{Theorem}[section]
\newtheorem{proposition}[theorem]{Proposition}
\newtheorem{lemma}[theorem]{Lemma}
\newtheorem{corollary}[theorem]{Corollary}
\newtheorem*{theo}{Theorem}

\theoremstyle{definition}
\newtheorem{definition}[theorem]{Definition}

\newtheorem{remark}[theorem]{Remark}

\numberwithin{equation}{section}

\newcommand{\Q}{\mathbb{Q}}

\newcommand{\Z}{\mathbb{Z}}
\newcommand{\Zt}{\mathbb{Z}[t^{\pm1}]}
\newcommand{\K}{\mathbb{K}}
\newcommand{\F}{\mathbb{F}}
  
\newcommand{\Dd}{\mathcal{D}}

\newcommand{\Bb}{\mathcal{B}}
\newcommand{\Cc}{\mathcal{C}}
\newcommand{\Cb}{\mathcal{C}_{\scriptstyle{\partial }}}
\newcommand{\Ccl}{\overline{\mathcal{C}}}

\renewcommand{\L}{\mathcal{L}}
\newcommand{\J}{\mathcal{J}}
\newcommand{\abX}{\widehat{X}}

\newcommand{\Int}{\mathrm{Int}}

\definecolor{vert}{RGB}{0,205,0}
\definecolor{bviolet}{rgb}{0.54,0.17,0.89}
\definecolor{aqua}{rgb}{0.0,1.0,1.0}

\begin{document}

\title[The algebraic topology of $4$--manifolds multisections]{The algebraic topology of $4$--manifolds multisections}

\author{Delphine Moussard}
\author{Trenton Schirmer}

\begin{abstract}
 A multisection of a 4--manifold is a decomposition into 1--handlebodies intersecting pairwise along 3--dimensional handlebodies or along a central closed surface; this generalizes the Gay--Kirby trisections. We show how to compute the twisted absolute and relative homology, the torsion and the twisted intersection form of a 4--manifold from a multisection diagram. The homology and torsion are given by a complex of free modules defined by the diagram and the intersection form is expressed in terms of the intersection form on the central surface. We give efficient proofs, with very few computations, thanks to a retraction of the (possibly punctured) 4--manifold onto a CW--complex determined by the multisection diagram. Further, a multisection induces an open book decomposition on the boundary of the 4--manifold; we describe the action of the monodromy on the homology of the page from the multisection diagram.
 \smallskip
 
 \noindent \textbf{MSC 2020}: 57K40 57K41
\end{abstract}

\maketitle

\section{Introduction and main results}

A trisection is a type of combinatorial structure on 4--manifolds which was discovered by Gay and Kirby via Morse 2--functions \cite{GayKirby}. They proved that any smooth $4$--manifold, possibly with boundary, can be decomposed as the union of three $4$--dimensional $1$--handlebodies, with $3$--dimensional $1$--handlebodies as pairwise intersections and a compact surface as global intersection. Such a trisection can be described by a diagram, namely the central surface with collections of curves that define the $3$--dimensional pieces. A trisection diagram determines a smooth $4$--manifold up to diffeomorphism, so that one should be able to read topological invariants of the manifold on the diagram. In the setting of closed $4$--manifolds, Feller, Klug, Schirmer and Zemke \cite{FKSZ} provided a computation of the homology and intersection form of the manifold from a trisection diagram, and Florens and Moussard \cite{FM} derived the twisted homology and torsion, and the twisted intersection form. Following these papers, Tanimoto \cite{Tanimoto} computed the homology of $4$--manifolds with connected boundary. Here we recover and generalize these results, computing from a diagram the twisted absolute and relative homology and torsion and the twisted intersection form for any trisected $4$--manifold with boundary. Moreover, we work with ``multisections'' in the sense of Islambouli--Naylor \cite{IN}, namely a cyclic decomposition of the manifold into any number of $4$--dimensional $1$--handlebodies, where successive pieces meet along $3$--dimensional $1$--handlebodies while non-successive ones meet along the central surface. We propose a more efficient approach. While Feller--Klug--Schirmer--Zemke worked with a handle decomposition of the manifold underlying the trisection, Florens--Moussard directly used the datum of the trisection. This last method reduced the homological computations, but the computation of torsion was quite intricate. Here we consider a deformation-retraction of the (possibly punctured) manifold onto a CW--complex associated with the multisection diagram. This simplifies the computations and provides the torsion ``for free''. This retraction could be useful for further computations of homological or homotopical invariants.

A multisection of a $4$--manifold $X$ with boundary induces an open book decomposition on the boundary. The monodromy of this open book has been described algorithmically by Castro, Gay and Pinz\'on-Caicedo \cite{CGPC1} from a diagram. Here we derive the action of the monodromy on the homology of the page from which can be derived a computation of the homology of $\partial X$ as well as the Alexander module of the binding determined by the monodromy.
\medskip

For $4$--manifolds with boundary, the handlebodies of a multisection inherit (hyper) compression bodies structures related to the way they intersect the boundary of the manifold.

\begin{definition}\label{def:CompressionBody}
 A \emph{compression body} $C$ is a cobordism from a compact orientable surface $\partial_-C$ to a connected compact orientable surface $\partial_+C$ which is constructed using only $1$--handles. Likewise a \emph{hyper compression body} $V$ is a cobordism from a compact orientable $3$--manifold $\partial_-V$ to a connected compact orientable $3$--manifold $\partial_+V$ constructed using only $1$--handles. A \emph{lensed} (hyper) compression body is then obtained by collapsing the vertical boundary of the cobordism so that the boundary of $\partial_+C$ ($\partial_+V$) becomes identified with the boundary of $\partial_-C$ ($\partial_-V$). 
\end{definition}

In the case that $\partial_-C=\emptyset$, it is understood at $C$ is built using only $1$--handles attached to a single $0$--handle. A (lensed) compression body is \emph{trivial} if $\partial_-C\cong \partial_+C$.  This means it is just a thickened surface $S\times I$, or if lensed, it is obtained from $S\times I$ by collapsing the $I$--fibers of $\partial S\times I$. 

\begin{definition}\label{def:Multisection}
 A \emph{multisection} of a compact orientable $4$--manifold $X$ is a decomposition $X=X_1\cup\cdots \cup X_n$ into $4$--dimensional $1$--handlebodies $X_i$ with the following properties (all arithmetic involving indices is mod $n$):
 \begin{enumerate}
  \item each $X_i$ has a lensed hyper compression body structure such that $\partial_-X_i=X_i\cap \partial X$ and if $\partial X\neq \emptyset$, there is a fixed surface $\Sigma_\partial$ such that, for all $1\leq i\leq n$, $\partial_-X_i$ is diffeomorphic to the trivial lensed compression body obtained by pinching the vertical boundary of $\Sigma_\partial\times I$,
  \item $\displaystyle\Sigma=\bigcap_{i=1}^n X_i$ is a compact connected orientable surface,
  \item $C_i=X_i\cap X_{i+1}$ is a $3$--dimensional $1$--handlebody with a lensed compression body structure satisfying $\partial_+C_i=\Sigma$ and $\partial_-C_i=C_i\cap \partial X\cong \Sigma_\partial$ for all $i$,
  \item when $|i-j|>1$, $X_i\cap X_j=\Sigma$.
 \end{enumerate}
 A multisection is called a \emph{trisection} when $n=3$.
\end{definition}

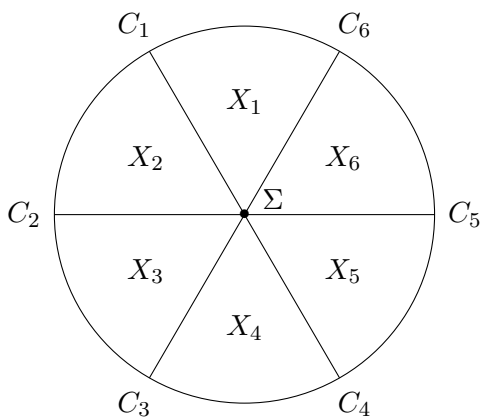
\begin{figure}[htb]
\begin{center}
\begin{tikzpicture} [scale=0.5]
 \draw (0,0) circle (5);
 \foreach \s in{1,...,5,6} {
 \draw[rotate=60*(\s+1)] (0,0) -- (5,0);
 \draw[rotate=60*(\s+1)] (5.8,0) node {$C_\s$};
 \draw[rotate=60*\s+30] (3,0) node {$X_\s$};}
 \draw (0,0) node {$\scriptstyle\bullet$} (0.2,-0.12) node[above right] {$\Sigma$};
\end{tikzpicture}
\end{center}
\caption{Schematic of a multisection}
\label{fig:multisection}
\end{figure}

The condition that the $C_i$ are $1$--handlebodies implies that $\Sigma$ is closed if and only if $X$ is closed, and $\Sigma_\partial$ has no closed component. 
This is the framework of most of the literature on trisections and within this framework a unified calculation of the algebraic topology is possible. The specific case where more general compression bodies are allowed, {\em ie} the case in which $\Sigma_\partial$ has closed components, was considered in the original paper of Gay and Kirby \cite{GayKirby}; however, the calculations become more delicate and require special treatment.  We postpone the homology computations in this case to a forthcoming publication in order to avoid the extra complications here. 

In the case that $\partial X\neq \emptyset$, it is also to be understood that for all $i$ mod $n$, $\partial_-X_i$ is parameterized as $\Sigma_\partial\times I/\sim$ in such a way that $\partial_-C_{i-1}=\Sigma_\partial\times \{0\}$ and $\partial_-C_i=\Sigma_\partial\times\{1\}$. Therefore, the multisection induces an open book decomposition on $\partial X$ with page $\Sigma_\partial$.

We fix once and for all a multisected manifold $X=\cup_{1\leq i\leq n}X_i$ and set $C_i=X_i\cap X_{i+1}$, $\Sigma=\cap_iX_i$. 

\begin{definition}
 Let $C$ be a compression body. A \emph{defining collection of disks} for $C$ is a collection $\Dd$ of disks properly embedded in $C$ such that $C\setminus\eta(\Dd)$ is a thickening of $\partial_-C$ (for instance the co-core disks of the 1--handles in the definition). The boundary $\partial\Dd\subset\partial_+C$ is a {\em defining collection of curves} for $C$.
\end{definition}

\begin{definition}
 A \emph{diagram} of the multisection $X=\cup_{1\leq i\leq n}X_i$ is a tuple $(\Sigma;c_1,\ldots , c_n)$ where~$c_i$ is a defining collection of curves for~$C_i$. 
\end{definition} 

A multisection diagram determines a unique smooth $4$--manifold \cite{CGPC2}. 
The structure of the $X_i$ gives some constraints on the curves of a multisection diagram. For each $i$, $X_i$ is obtained from a thickened $\partial_-X_i$ by attaching $1$-handles, so that $\partial_+X_i\cong (S^2\times S^1)^{\sharp k}\#(\#\partial_-X_i)$, where $k$ is the number of $1$-handles in excess of the minimum required to connect $\partial_-X_i$ and $\# \partial_-X_i$ is the connected sum of all components of $\partial_- X_i$. Now Definition~\ref{def:Multisection} implies that $C_{i-1}\cup_\Sigma C_i$ is a sutured Heegaard splitting of $\partial_+X_i$, so that the Heegaard diagram $(\Sigma;c_{i-1},c_i)$ is always handleslide-diffeomorphic to a standard diagram as represented in Figure~\ref{fig:StandardDiagram}.

\begin{figure}[htb]
\begin{center}
\begin{tikzpicture} [scale=0.3]
\newcommand{\trou}{
(2,0) ..controls +(0.5,-0.25) and +(-0.5,-0.25) .. (4,0)
(2.3,-0.1) ..controls +(0.6,0.2) and +(-0.6,0.2) .. (3.7,-0.1)}
\draw (0,0) ..controls +(0,1) and +(-2,1) .. (4,2);
\draw (4,2) ..controls +(1,-0.5) and +(-1,0) .. (6,1.25);
\draw[dashed] (6,1.25);
\draw (0,0) ..controls +(0,-1) and +(-2,-1) .. (4,-2);
\draw (4,-2) ..controls +(1,0.5) and +(-1,0) .. (6,-1.25);
\foreach \x/\y in {6/0,18/0,38/3} {
\begin{scope} [xshift=\x cm,yshift=\y cm]
\draw (0,1.25) ..controls +(1,0) and +(-2,1) .. (4,2);
\draw (4,2) ..controls +(2,-1) and +(-2,-1) .. (8,2);
\draw (8,2) ..controls +(2,1) and +(-1.2,0) .. (12,1.25);
\draw (0,-1.25) ..controls +(1,0) and +(-2,-1) .. (4,-2);
\draw (4,-2) ..controls +(2,1) and +(-2,1) .. (8,-2);
\draw (8,-2) ..controls +(2,-1) and +(-1.2,0) .. (12,-1.25);
\end{scope}}
\foreach \x in {0,6,12,18,24} {
\draw[xshift=\x cm] \trou;}
\foreach \x in {0,6} {
\draw[color=red,xshift=\x cm] (3,-0.2) ..controls +(0.2,-0.5) and +(0.2,0.5) .. (3,-2.3);
\draw[dashed,color=red,xshift=\x cm] (3,-0.2) ..controls +(-0.2,-0.5) and +(-0.2,0.5) .. (3,-2.3);
\draw[color=blue,xshift=\x cm] (3,0)ellipse(1.6 and 0.8);}
\foreach \x in {11.9,17.9,23.9} {
\draw[color=red,xshift=\x cm] (3,-0.2) ..controls +(0.2,-0.5) and +(0.2,0.5) .. (3,-2.3);
\draw[dashed,color=red,xshift=\x cm] (3,-0.2) ..controls +(-0.2,-0.5) and +(-0.2,0.5) .. (3,-2.3);}
\foreach \x in {12.2,18.2,24.2} {
\draw[color=blue,xshift=\x cm] (3,-0.2) ..controls +(0.2,-0.5) and +(0.2,0.5) .. (3,-2.3);
\draw[dashed,color=blue,xshift=\x cm] (3,-0.2) ..controls +(-0.2,-0.5) and +(-0.2,0.5) .. (3,-2.3);}
\foreach \x/\y in {38/3,44/3,38/-3,38/9} {
\draw[xshift=\x cm,yshift=\y cm] \trou;}
\foreach \x/\y in {44/-3,50/3}{
\begin{scope} [xshift=\x cm,yshift=\y cm]
\draw (0,1.25) ..controls +(0.4,-1) and +(0.4,1) .. (0,-1.25);
\draw[dashed] (0,1.25) ..controls +(-0.4,-1) and +(-0.4,1) .. (0,-1.25);
\end{scope}}
\begin{scope} [xshift=26cm,yshift=-3cm]
\draw (14,2) ..controls +(2,1) and +(-1.2,0) .. (18,1.25);
\draw (14,-2) ..controls +(2,-1) and +(-1.2,0) .. (18,-1.25);
\draw (12,1.25) ..controls +(0.5,0) and +(-1,-0.5) .. (14,2);
\draw (12,-1.25) ..controls +(0.5,0) and +(-1,0.5) .. (14,-2);
\end{scope}
\begin{scope} [xshift=26cm,yshift=9cm]
\draw (14,2) ..controls +(2,1) and +(-1,0) .. (18,2);
\draw (14,-2) ..controls +(2,-1) and +(-1,0) .. (18,-2);
\draw (12,1.25) ..controls +(0.5,0) and +(-1,-0.5) .. (14,2);
\draw (12,-1.25) ..controls +(0.5,0) and +(-1,0.5) .. (14,-2);
\end{scope}
\begin{scope} [xshift=26cm,yshift=-9cm]
\draw (12,1.25) ..controls +(2,0) and +(-2,0) .. (18,2);
\draw (12,-1.25) ..controls +(2,0) and +(-2,0) .. (18,-2);
\end{scope}
\foreach \y in {9,-9} {
\begin{scope} [xshift=44cm,yshift=\y cm]
\foreach \s in {1,-1} {
\draw (0,2*\s) ..controls +(0.2,-0.5*\s) and +(0.2,0.5*\s) .. (0,0.5*\s);
\draw[dashed] (0,2*\s) ..controls +(-0.2,-0.5*\s) and +(-0.2,0.5*\s) .. (0,0.5*\s);}
\draw (0,0.5) arc (90:270:0.5);
\end{scope}}
\foreach \y in {-6,0,6} {
\draw (38,1.75+\y) arc (90:270:1.75);}
\foreach \s in {1,-1} {
\draw (30,1.25*\s) .. controls +(3,0) and +(-5,0) .. (38,10.25*\s);}
\foreach \y in {-9,-3,3} {
\foreach \x/\c in {37.9/red,38.2/blue} {
\begin{scope} [xshift=\x cm,yshift=\y cm,\c]
\draw (0,1.25) ..controls +(0.4,-1) and +(0.4,1) .. (0,-1.25);
\draw[dashed] (0,1.25) ..controls +(-0.4,-1) and +(-0.4,1) .. (0,-1.25);
\end{scope}}}
\end{tikzpicture}
\end{center} \caption{Heegaard diagram for $C_{i-1}\cup C_i$\\[2pt]{\footnotesize In this example, $C_{i-1}$ and $C_i$ are constructed with eight $1$--handles and $X_i$ with six $1$--handles.\\ The manifold $X$ has four boundary components. The components of the page $\Sigma_\partial$ have a pair \mbox{(genus,number of boundary components)} equal to $(1,2)$, $(2,1)$, $(1,1)$ and $(0,2)$.}} \label{fig:StandardDiagram}
\end{figure}

Fix a homomorphism $\varphi:\Z[\pi_1(X)]\rightarrow R$ where $R$ is a commutative ring. We shall express the absolute and relative homology of $X$, twisted by $\varphi$, in terms of the multisection diagram. Fix a point $*\in\Int(\Sigma)$ and let $L_i^\varphi$ be the submodule of $H_1^\varphi(\Sigma,*)$ generated by the homology classes of the curves in $c_i$. 

\begin{theo}[Theorem~\ref{thm:XHomology}]
 If $\partial X\neq\emptyset$, the twisted homology of $X$ is given by the chain complex of free $R$--modules
 \begin{equation} \tag{$\Cc$}
  0\rightarrow \bigoplus_{i=1}^n(L_{i-1}^\varphi\cap L_i^\varphi)\xrightarrow{\partial_2} \bigoplus_{i=1}^n L_i^\varphi\xrightarrow{\partial_1} H_1^\varphi(\Sigma,*)\xrightarrow{\partial_0} H_0^\varphi(*)
 \end{equation}
 where $\partial_2\big((x_i)_{1\leq i\leq n}\big)=\big((x_i-x_{i+1})_{1\leq i\leq n}\big)$ and $\partial_1\big((x_i)_{1\leq i\leq n}\big)=\sum_{i=1}^n x_i$.
 Moreover, if $R$ is a field, there are complex bases $c$ of \ref{complexforX} such that $\tau^\varphi(X;h)=\tau(\Cc;c,h)$ for any homology basis $h$ of~$X$ and \ref{complexforX}. 
\end{theo}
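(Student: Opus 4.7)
The strategy, as announced in the introduction, is to construct a CW complex $W$ canonically associated with the multisection diagram, prove that $X$ (punctured at a boundary point if needed) deformation retracts onto $W$, and then identify the $\varphi$-twisted cellular chain complex of $W$ with $(\Cc)$. The torsion statement then follows because a deformation retraction is a simple homotopy equivalence, hence preserves Reidemeister torsion of based chain complexes.

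The first task is to build $W$ in three layers. Since $\partial X\neq\emptyset$ forces $\partial\Sigma\neq\emptyset$, the surface $\Sigma$ deformation retracts onto a $1$--dimensional spine $G_\Sigma$ with single $0$--cell $*$; its edges yield the $1$--cells of $W$, and the cellular $1$--chain module is canonically $H_1^\varphi(\Sigma,*)$. Next, attach a $2$--cell for every disk $D\in\Dd_i$ along the curve $\partial D$ (homotoped into $G_\Sigma$); the image of the resulting $\partial_2$ from the $\Dd_i$--summand is by definition $L_i^\varphi$, and the defining disks lift to a free basis on the $\varphi$--cover by the compression body structure, so the $2$--chain module may be identified with $\bigoplus_i L_i^\varphi$. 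Finally, for each $i$ and each basis element $x$ of $L_{i-1}^\varphi\cap L_i^\varphi$, write $x=\partial\alpha=\partial\beta$ with $\alpha\in R[\Dd_{i-1}]$ and $\beta\in R[\Dd_i]$, and attach a $3$--cell along the $2$--sphere $\alpha-\beta$.

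The bulk of the work is to show $X\simeq W$, which I would carry out piece by piece using the multisection structure: $\Sigma$ retracts onto $G_\Sigma$; each $3$--dimensional compression body $C_i$ collapses onto its $2$--dimensional part ($G_\Sigma$ together with $\Dd_i$) after absorbing the collar of $\partial_-C_i\cong\Sigma_\partial$ along the $I$--direction of the open book piece of $\partial X$; and each $4$--dimensional hyper compression body $X_i$ collapses onto a $3$--dimensional spine containing the already retracted $C_{i-1}\cup_\Sigma C_i$ plus exactly the $3$--cells built into $W$. The retractions agree on the overlaps $C_i=X_i\cap X_{i+1}$ and $\Sigma=C_{i-1}\cap C_i$ by a standard collar argument, so they assemble into a global deformation retraction $X\to W$. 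Once this is in hand, the $\varphi$--twisted cellular chain complex of $W$ is $(\Cc)$: the differential $\partial_1$ is summation because each $2$--cell boundary is the curve class of its disk in $L_i^\varphi$, and $\partial_2$ is the cyclic difference because a $3$--cell indexed by $x\in L_{i-1}^\varphi\cap L_i^\varphi$ has boundary $+x$ on the $L_{i-1}$ side and $-x$ on the $L_i$ side.

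The main obstacle is the construction and verification of the $3$--cells produced by the retraction of each $X_i$: to prove they are in bijection with a free basis of $L_{i-1}^\varphi\cap L_i^\varphi$, neither too many nor too few. This requires careful handle--theoretic bookkeeping on the hyper compression body $X_i$ relative to the $2$--skeleton $G_\Sigma\cup\Dd_{i-1}\cup\Dd_i$, using that $X_i$ is a $1$--handlebody (so every $2$--cycle in its spine bounds in $X_i$) and that all non--adjacent intersections $X_i\cap X_j$ collapse to $\Sigma$ (so no extraneous $3$--cells arise from interactions between non--adjacent pieces).
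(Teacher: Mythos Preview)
Your overall strategy---retract $X$ onto a CW--complex built from a spine of $\Sigma$, the defining disks, and certain $3$--cells, then read off the twisted cellular complex---is exactly the paper's. The difference lies entirely in how the $3$--cells are handled, and this is precisely where you flag your ``main obstacle''.

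You propose to construct the $3$--cells \emph{algebraically first}: pick a basis of $L_{i-1}^\varphi\cap L_i^\varphi$, express each basis element as $\partial\alpha=\partial\beta$ with $\alpha,\beta$ chains in the disk modules, and attach a $3$--cell along ``the $2$--sphere $\alpha-\beta$''. As stated this is not quite a CW attachment (a $2$--cycle of disks need not be a sphere), and even once repaired via double curves you still owe the retraction of $X_i$ onto exactly these cells. The paper goes \emph{geometric first}: the $3$--cells are simply a defining collection of balls $\Bb_i$ for the hyper compression body $X_i$. The retraction $X\to\Sigma\cup\bigcup_i(\Dd_i\cup\Bb_i)$ is then immediate from the product structure on $X_i\setminus\eta(\Bb_i)$ and $C_i\setminus\eta(\Dd_i)$, with no bookkeeping required. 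The identification of the degree--$3$ chain module with $\bigoplus_i(L_{i-1}^\varphi\cap L_i^\varphi)$ is done afterwards and purely homologically: one has $H_3^\varphi(X,Y)\cong\bigoplus_i H_3^\varphi(X_i,C_{i-1}\cup C_i)$, and two short exact sequences (of the pair $(X_i,C_{i-1}\cup C_i)$ and of $(C_{i-1}\cup C_i,\Sigma)$, using that $X_i$ is a $1$--handlebody) give $H_3^\varphi(X_i,C_{i-1}\cup C_i)\cong H_2^\varphi(C_{i-1}\cup C_i)\cong L_{i-1}^\varphi\cap L_i^\varphi$. This completely dissolves your obstacle: the number and attaching maps of the $3$--cells never need to be matched by hand with a basis of the intersection. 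The interpretation via double curves that you are reaching for does appear in the paper, but only as a remark on torsion bases after the proof, not as part of the argument.
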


Let $\Sigma'$ be the surface $\Sigma$ with a small open disk removed, such that the point $*$ lies on the boundary of the removed disk. For $1\leq i\leq n$, let $\J_i^\varphi$ be the orthogonal complement in $H_1^\varphi(\Sigma',\partial\Sigma)$ of $L_i^\varphi$ with respect to the twisted intersection form on $H_1^\varphi(\Sigma,*)\times H_1^\varphi(\Sigma',\partial\Sigma)$.

\begin{theo}[Theorem~\ref{thm:RelXHomology}]
 If $\partial X\neq\emptyset$, the twisted homology of $(X,\partial X)$ is given by the chain complex of free $R$--modules
 \begin{equation} \tag{$\Cb$}
  H_2^\varphi(\Sigma,\Sigma')\xrightarrow{\partial_3} \bigoplus_i(\J_{i-1}^\varphi\cap \J_{i}^\varphi)\xrightarrow{\partial_2} \bigoplus_i\J_i^\varphi\xrightarrow{\partial_1} H_1^\varphi(\Sigma',\partial\Sigma)\rightarrow 0
 \end{equation}
 where $\partial_3([\Sigma])=[\partial(\Sigma\setminus\Sigma')]$, $\partial_2((x_i)_{1\leq i\leq n})=((x_i-x_{i+1})_{1\leq i\leq n})$ and $\partial_1((x_i)_{1\leq i\leq n})=\sum_{i=1}^nx_i$.
 Moreover, if $R$ is a field, there is a complex basis $c$ of \ref{complexXrel} such that $\tau^\varphi(X,\partial X;h)=\tau(\Cb;c,h)$ for any homology basis~$h$ of~$(X,\partial X)$ and \ref{complexXrel}. 
\end{theo}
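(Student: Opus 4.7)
The plan is to mirror the proof of Theorem~\ref{thm:XHomology}: deformation-retract $X$ onto a CW--complex $Y$ built from the multisection diagram, and then extract the \emph{relative} cellular twisted chain complex of $(Y,\partial_-Y)$, where $\partial_-Y\subset Y$ is the subcomplex that models $\partial X$ under the retraction. The open book decomposition induced on $\partial X$, together with the lensed hyper compression body structures of the $X_i$, yields an explicit description of $\partial_-Y$ supported on the page $\Sigma_\partial$ and its binding, from which one reads off which cells of $Y$ survive in the quotient complex.

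Once this is in place, I would repeat the combinatorial analysis that produced $\Cc$. The cyclic arrangement of the pieces $X_1,\ldots,X_n$ is unchanged, so the differentials retain their Mayer--Vietoris shape $(x_i)\mapsto(x_i-x_{i+1})$ and $(x_i)\mapsto\sum_i x_i$. Two substantive new features appear. First, relativizing by the small disk around the base point $*$ produces one extra top-degree free generator, which accounts for the module $H_2^\varphi(\Sigma,\Sigma')$ and the map $\partial_3$ sending $[\Sigma]$ to the class of the small boundary circle placed diagonally in the $n$ intersection summands. Second, the base point no longer anchors $1$--chains modulo $\partial X$, so $H_1^\varphi(\Sigma,*)$ is replaced by the relative homology $H_1^\varphi(\Sigma',\partial\Sigma)$ at the end of the complex.

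The main obstacle is the identification of the middle modules as the orthogonal complements $\J_i^\varphi$ rather than some naive relative analogue of $L_i^\varphi$. In the absolute setting the defining disks of $C_i$ cap the curves of $c_i$, so their boundaries \emph{span} $L_i^\varphi$; in the relative setting the same disks instead impose \emph{twisted orthogonality conditions}, in the sense that a relative $1$--cycle on $\Sigma'$ bounds in $C_i$ modulo $\partial X$ precisely when its twisted algebraic intersection number with every curve of $c_i$ vanishes, which by definition picks out the submodule $\J_i^\varphi$ with respect to the perfect pairing $H_1^\varphi(\Sigma,*)\times H_1^\varphi(\Sigma',\partial\Sigma)\to R$. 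Propagating this duality to the intersections $\J_{i-1}^\varphi\cap\J_i^\varphi$ and matching the resulting differentials with those dictated by the CW structure of $(Y,\partial_-Y)$ is the step that requires the most care with twisted coefficients; this is where having the explicit geometric retraction from Theorem~\ref{thm:XHomology} pays off, since it replaces what would otherwise be a chain-level Poincar\'e--Lefschetz argument with a direct cellular computation.

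For the torsion statement, the complex $\Cb$ is, by construction, the twisted cellular chain complex of $(Y,\partial_-Y)$ up to the canonical change of basis supplied by the multisection diagram. The formula $\tau^\varphi(X,\partial X;h)=\tau(\Cb;c,h)$ for the induced complex basis~$c$ then follows from the invariance of Reidemeister torsion under quasi-isomorphisms of free complexes, exactly as in the proof of Theorem~\ref{thm:XHomology}.
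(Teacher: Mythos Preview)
There is a genuine gap in the retraction step. The deformation retraction underlying Theorem~\ref{thm:XHomology} collapses each $X_i$ onto $\partial_+X_i\cup\Bb_i$ and then each $C_i$ onto $\Sigma\cup\Dd_i$; under this map $\partial X=\cup_i\partial_-X_i$ is pushed first into $\cup_i\partial_+X_i$ and then squashed onto the $2$--complex $\Sigma$. Thus $\partial X$ is not carried to a subcomplex $\partial_-Y$ homotopy equivalent to $\partial X$, and there is no relative cellular chain complex of a pair $(Y,\partial_-Y)$ to extract. Your description of ``$\partial_-Y$ supported on the page $\Sigma_\partial$ and its binding'' cannot be realised with this retraction.

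The paper circumvents this by reversing the direction of the collapse. One removes an open ball $\eta(*)$ to form $X'$ and retracts \emph{inside--out} from the puncture, so that the retraction is rel~$\partial X$. For this one needs not the defining disks $\Dd_i$ and balls $\Bb_i$, but the dual \emph{$r$--defining collections} $\Dd_i^r$, $\Bb_i^r$ of Definition~\ref{def:RDefiningCollection}, which cut $C_i$ and $X_i$ down to single balls rather than to thickened $\partial_-$'s. This yields a CW pair $(Z_3^\partial\cup\partial X,\partial X)$ onto which $(X',\partial X)$ retracts, and the resulting cellular complex is \ref{complexforXrel}. The top term $H_2^\varphi(\Sigma,\Sigma')$ does not arise from ``relativizing by the small disk around $*$'' inside a single complex; it is $H_4^\varphi(X,X')$, coming from the long exact sequence of the triple $(X,X',\partial X)$ which glues the punctured computation back to $X$. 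Your identification of $\J_i^\varphi$ as the module of relative $1$--cycles on $\Sigma'$ that are orthogonal to $c_i$ is correct and is exactly Lemma~\ref{lem:JCompressionBody1}, but it only becomes available once the inside--out retraction has been set up.
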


When $\Sigma$ is closed, we define the $L_i^\varphi$ in $H_1^\varphi(\Sigma',*)$. These are lagrangians, namely they are their own orthogonal complement with respect to the intersection form.

\begin{theo}[Theorem~\ref{thm:XClosedHomology}]
 If $X$ is closed, the twisted homology of $X$ is given by the chain complex of free $R$--modules
 \begin{equation} \tag{$\Ccl$}
  H_2^\varphi(\Sigma,\Sigma')\xrightarrow{\partial_3} \bigoplus_i(L_{i-1}^\varphi\cap L_{i}^\varphi)\xrightarrow{\partial_2} \bigoplus_i L_i^\varphi\xrightarrow{\partial_1} H_1^\varphi(\Sigma',*)\rightarrow H_0^\varphi(*)
 \end{equation}
 where $\partial_3([\Sigma])=[\partial\Sigma']$, $\partial_2((x_i)_{1\leq i\leq n})=((x_i-x_{i+1})_{1\leq i\leq n})$ and $\partial_1((x_i)_{1\leq i\leq n})=\sum_{i=1}^nx_i$.
 Moreover, if $R$ is a field, there is a complex basis $c$ of \ref{complexXclosed} such that $\tau^\varphi(X;h)=\tau(\Ccl;c,h)$ for any homology basis~$h$ of~$X$ and \ref{complexXclosed}. 
\end{theo}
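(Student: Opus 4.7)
The plan is to deduce the closed case from Theorem~\ref{thm:XHomology} by puncturing $X$ at an interior point of $\Sigma$. Choose a small open $4$--ball $B\subset X$ centered at a point $p\in\Int(\Sigma)$ so that $B\cap\Sigma$ is an open disk $D$, each $B\cap C_i$ is a small collar of an arc in $\partial D$, and each $B\cap X_i$ is a ``wedge'' meeting $\Sigma$ only along $D$. Then $X^\circ:=X\setminus B$ inherits a multisection in the sense of Definition~\ref{def:Multisection} with central surface $\Sigma'=\Sigma\setminus\Int(D)$, and the curve systems $c_i$ (isotoped away from $p$) remain defining collections for the new $C_i$'s. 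The induced open book on $\partial X^\circ\cong S^3$ has page $\Sigma_\partial\cong D^2$, so $\partial X^\circ\neq\emptyset$ and Theorem~\ref{thm:XHomology} produces a chain complex of free $R$--modules computing $H_*^\varphi(X^\circ)$; because the submodules $L_i^\varphi\subset H_1^\varphi(\Sigma',*)$ are defined identically in both theorems, this complex coincides with the four-term tail
\[
 0\to\bigoplus_i(L_{i-1}^\varphi\cap L_i^\varphi)\xrightarrow{\partial_2}\bigoplus_i L_i^\varphi\xrightarrow{\partial_1}H_1^\varphi(\Sigma',*)\xrightarrow{\partial_0}H_0^\varphi(*)
\]
of the announced complex $\Ccl$.

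The next step is to glue $B$ back in. Since $B\cong B^4$ is simply connected, excision yields $H_k^\varphi(X,X^\circ)\cong H_k(B,\partial B)\otimes R$, which is a free $R$--module of rank one in degree four and zero otherwise. The long exact sequence of $(X,X^\circ)$ therefore gives $H_k^\varphi(X)\cong H_k^\varphi(X^\circ)$ for $k\le 2$ together with
\[
 0\to H_4^\varphi(X)\to R\xrightarrow{\delta}H_3^\varphi(X^\circ)\to H_3^\varphi(X)\to 0.
\]
At the chain level, the deformation retract of $X^\circ$ onto the CW--model $Y^\circ$ furnished by the proof of Theorem~\ref{thm:XHomology} extends to a deformation retract of $X$ onto a CW--complex $Y$ obtained from $Y^\circ$ by attaching a single $4$--cell along the retract of $\partial B\cong S^3$ inside $Y^\circ$. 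Identifying this $4$--cell with $H_2^\varphi(\Sigma,\Sigma')$ via the relative fundamental class of $D$, which generates a free rank-one $R$--module since $D$ is simply connected in $X$, prepends the missing term of $\Ccl$.

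To identify $\partial_3$ I would trace $\partial B\cong S^3$ through the retraction: the disk page $\Sigma_\partial$ retracts onto $\partial D=\partial\Sigma'$, while each sector $\partial B\cap X_i$ retracts into the sutured Heegaard splitting $C_{i-1}\cup_\Sigma C_i$ of $\partial_+X_i$ and then onto the $2$--complex spanned by $c_{i-1}\cup c_i$. The contribution to the $i$-th factor $L_{i-1}^\varphi\cap L_i^\varphi$ is precisely $[\partial\Sigma']$: since the curves of $c_i$ cut $\Sigma$ into a single disk whose boundary in $\Sigma'$ is $\partial D$, the class $[\partial\Sigma']=[\partial D]$ is an $R$--linear combination of the $c_i$--classes lifted to the $\varphi$--cover, so $[\partial\Sigma']\in L_i^\varphi$ for every $i$; summing the sector contributions produces the constant diagonal $([\partial\Sigma'])_{1\le i\le n}$, which is automatically killed by $\partial_2$.

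The main technical obstacle is this last step: a clean local model of the retraction near the puncture is required to confirm the formula for $\partial_3$, including the diagonal distribution across the $n$ summands and the consistency of signs between adjacent sectors. Once $\Ccl$ is identified with the twisted cellular complex of $Y$, the homology statement is immediate, and the torsion statement follows as in Theorem~\ref{thm:XHomology}: the cells of $Y$ furnish a preferred basis $c$ of $\Ccl$ making it a based free complex simply equivalent to the twisted cellular complex of a CW--structure on $X$, so $\tau(\Ccl;c,h)=\tau^\varphi(X;h)$ by invariance of Reidemeister torsion under simple equivalence.
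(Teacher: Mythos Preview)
Your approach is essentially the same as the paper's: both puncture $X$ at a point of $\Sigma$, retract the punctured manifold onto the CW--complex $\Sigma'\cup\Dd\cup\Bb$, and then append the degree--$4$ term coming from the removed ball. The paper frames this as re-running the relative-homology argument of Section~\ref{sec:RelHomology} (using Lemma~\ref{lem:RCWStructure} on $X'$ and then adding $H_4^\varphi(X,X')$), whereas you frame it as applying Theorem~\ref{thm:XHomology} to the multisected $X^\circ$ and then gluing a $4$--cell; since in the closed case $r$--defining collections coincide with ordinary defining collections, the two retractions land on the same $3$--complex and the two organizations are interchangeable.

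One small correction: the curves of $c_i$ do \emph{not} cut the closed genus--$g$ surface $\Sigma$ into a single disk; they cut it into a $2g$--holed sphere. The correct reason $[\partial\Sigma']\in L_i^\varphi$ is that the planar surface $\Sigma'\setminus\eta(c_i)$ lifts to a $2$--chain in the cover exhibiting $[\partial\Sigma']$ as an $R$--linear combination of lifts of the $c_i$--curves. For identifying $\partial_3$, the paper avoids your ``local model of the retraction'' by tracing the class $[\partial\eta(\star)]\in H_3^\varphi(X',Y')$ through the algebraic isomorphisms of Lemma~\ref{lem:HandleBody} (parallel to the proof of Theorem~\ref{thm:RelXHomology}); this is cleaner than analyzing the geometric attaching map and delivers the diagonal $([\partial\Sigma'])_i$ directly.
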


These three theorems allow to represent homology classes by mainly explicit chains in the multisected manifold, which meet transversely along copies of the central surface $\Sigma$. This provides a simple description of the intersection form on $X$.

\begin{theo}[Theorem~\ref{thm:IntersectionForm}]
 Suppose $h_1=[(x_i)_{1\leq i\leq n}]\in H_2^\varphi(X)$ with $(x_i)_{1\leq i\leq n}\in\oplus_i L_i^\varphi$ and $h_2=[(y_i)_{1\leq i\leq n}]\in H_2^\varphi(X,\partial X)$ with $(y_i)_{1\leq i\leq n}\in\oplus_i\J_i^\varphi$. Then 
 \[\langle h_1,h_2\rangle_X^\varphi=\sum_{1\leq i<j\leq n}\langle x_i,y_j\rangle_\Sigma^\varphi\] 
 where $\langle\cdot,\cdot\rangle_{X}^\varphi$ and $\langle\cdot,\cdot\rangle_\Sigma^\varphi$ are the equivariant intersection forms on $H_2^\varphi(X)\times H_2^\varphi(X,\partial X)$ and $H_1^\varphi(\Sigma,*)\times H_1^\varphi(\Sigma',\partial \Sigma)$ (on $H_2^\varphi(X)$ and $H_1(\Sigma',*)$ if $X$ is closed).
\end{theo}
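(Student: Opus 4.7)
The plan is to produce explicit chain-level 2-cycles for $h_1$ and $h_2$ from the complexes $(\Cc)$ and $(\Cb)$, to displace them into transverse position using the 4-dimensional structure of the multisection, and to compute the intersection as a signed $\varphi$-weighted count of transverse points localized on the central surface.

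\emph{Step 1: Chain-level representatives.} The earlier theorems give $h_1=[(x_i)]$ with $(x_i)\in\bigoplus_iL_i^\varphi$ and $\sum_ix_i=0$, and $h_2=[(y_j)]$ with $(y_j)\in\bigoplus_j\J_j^\varphi$ and $\sum_jy_j=0$. Since each $x_i\in L_i^\varphi$ is an $R$-combination (lifted to the appropriate cover) of curves of $c_i$ bounding meridian disks of the handlebody $C_i$, I can choose a 2-chain $D_i\subset C_i$ with $\partial D_i=x_i$; a 2-chain $P$ on the central surface with $\partial P=-\sum_ix_i$ then closes $\xi_1=\sum_iD_i+P$ into an absolute 2-cycle representing $h_1$. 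Dually, the orthogonality condition $y_j\in\J_j^\varphi$ provides, by a Lefschetz-duality argument inside the handlebody $C_j$, a 2-chain $E_j\subset C_j$ whose boundary consists of $y_j$ on $\Sigma$ together with arcs $\gamma_j$ on $\partial_-C_j\subset\partial X$, and a 2-chain $Q$ on $\Sigma'$ completes $\xi_2=\sum_jE_j+Q$ into a relative 2-cycle representing $h_2$.

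\emph{Step 2: Transverse displacement.} Both cycles lie a priori in the 3-subcomplex $\Sigma\cup\bigcup_iC_i$, so I use the 4-dimensional room to push $\xi_2$ off it. In the tubular neighborhood $\Sigma\times D^2$ of $\Sigma$, where the chambers $X_i$ correspond to the $n$ angular wedges, I apply a small rotation in the positive angular direction, which slides each $E_j$ off $C_j$ into the chamber $X_{j+1}$ while keeping its boundary on $\Sigma$; I also push $Q$ transversely off $P$ within $\Sigma'$.

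\emph{Step 3: Intersection count.} After displacement the chains are transverse in $X$, and their intersection is localized on $\Sigma$ at points of $x_i\cap y_j$ (and at points coming from the cross terms involving $P,Q$). A careful case analysis, after cutting the cyclic order to a fundamental domain $1<\cdots<n$, shows that the pair $(D_i,E_j)$ produces transverse points in bijection with $x_i\cap y_j$ with the $\varphi$-weighted signs of $\langle x_i,y_j\rangle_\Sigma^\varphi$ exactly when $i<j$, and none otherwise. The cross-terms involving $P$ and $Q$ telescope to zero by the cycle conditions $\sum_ix_i=0$ and $\sum_jy_j=0$. Summing yields the asserted formula. The closed case is handled identically using $(\Ccl)$ in place of $(\Cc)$, with $\Sigma'$ replacing $\Sigma$ in the relevant places.

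\emph{Main obstacle.} The essential difficulty is justifying the asymmetric restriction to pairs with $i<j$ rather than, say, the naive ``small-rotation'' restriction to $j=i+1$. To realize the full range, one must exploit the homological freedom in choosing $D_i$ (or $E_j$), replacing it by a cobordant 2-chain that sweeps across several chambers of the multisection; the combinatorics of the cyclic order then exactly produces the summation $\sum_{i<j}$. Verifying the vanishing of the contributions involving $P$, $Q$, and (in the boundary case) the arcs $\gamma_j$ along the pages of the induced open book, together with the sign and $\varphi$-weight bookkeeping, is straightforward but delicate.
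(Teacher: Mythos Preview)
Your overall strategy matches the paper's: represent $h_1$ and $h_2$ by explicit $2$--chains built from disks in the $C_i$, displace one of them using the normal disk bundle of $\Sigma$, and read the intersection off on $\Sigma$. The gap is in Step~2. A uniform small rotation that slides $E_j$ off $C_j$ into the single chamber $X_{j+1}$, while keeping $\partial E_j=y_j$ on $\Sigma$, does not produce the $i<j$ intersections you assert in Step~3. In the disk fibre of $\eta(\Sigma)$, the rotated $E_j$ is $y_j\times\{\text{ray at angle }\theta_j+\epsilon\}$ and $D_i$ is $x_i\times\{\text{ray at angle }\theta_i\}$; two distinct rays from the origin meet only at the origin, so $D_i$ and the rotated $E_j$ meet only at $(x_i\cap y_j)\times\{0\}\subset\Sigma$, a boundary coincidence of the two half-planes rather than a transverse interior point. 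After this rotation the chains are still not transverse, and the formula you want cannot be read off. Your ``Main obstacle'' paragraph correctly identifies that one must sweep across several chambers, but it does not say which sweep, and that choice is the whole content of the theorem.

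The paper's displacement is precisely this missing step. One pushes $Z_2^\partial\cap C_j$ not into the adjacent chamber but into $\bigcup_{1\le k\le j}X_k$: concretely, in the disk fibre the inner end of every $E_j$ is moved to a common hub $p_0\in X_1$, and the pushed $E_j$ follows a path from $p_0$ through $X_1,X_2,\dots,X_j$ out along $C_j$ (left picture of Figure~\ref{fig:IntForm}). This path crosses $C_i$ exactly once for each $1\le i<j$ and never for $i\ge j$; at each crossing the pushed $E_j$ meets $D_i$ transversely in $(x_i\cap y_j)\times\{\text{pt}\}$, giving $\sum_{i<j}\langle x_i,y_j\rangle_\Sigma^\varphi$ immediately. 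Because all pushed $E_j$ share the hub $p_0$ and $\sum_j y_j=0$, the displaced chain is already a relative cycle, so no auxiliary chain $Q$ is needed; on the absolute side, working in the cellular model where $\Sigma$ is collapsed to the $1$--skeleton $Z_1$ eliminates your $P$ as well, and the ``cross-terms telescope to zero'' verification never arises.
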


\begin{theo}[Theorem~\ref{thIntFormOdd}]
 Suppose that either $h_1\in H_1^\varphi(X)$ corresponds to the element $a\in H_1^\varphi(\Sigma,*)$ and $h_2\in H_3^\varphi(X,\partial X)$ corresponds to the element $b\in \cap_i\J_i^\varphi$, or $h_1\in H_1^\varphi(X,\partial X)$ corresponds to the element $a\in H_1^\varphi(\Sigma',\partial \Sigma)$ and $h_2\in H_3^\varphi(X)$ corresponds to the element $b\in \cap_i L_i^\varphi$ ($a\in H_1^\varphi(\Sigma',*)$ if $X$ is closed). Then \[\langle h_1,h_2\rangle_X^\varphi=\langle a,b\rangle_\Sigma^\varphi.\]
\end{theo}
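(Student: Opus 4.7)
The plan is to use the chain-level representatives produced by the deformation retract onto the multisection CW-complex, together with the local product structure of $X$ near the central surface $\Sigma$, to reduce the intersection pairing to the surface pairing. A tubular neighborhood of $\Sigma$ in $X$ has the form $\Sigma\times D^2$, with the $D^2$-factor divided into $n$ pie-slices $S_1,\ldots,S_n$ meeting along rays, so that $X_i$ meets the neighborhood in $\Sigma\times S_i$ and $C_i$ is $\Sigma$ times the ray between $S_i$ and $S_{i+1}$. This local model is the geometric content of the alternating-sum differentials $\partial_2$ and $\partial_3$ that appear in the complexes $\Cc$ and $\Cb$.

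In case 1, I would take a 1-cycle on $\Sigma$ representing $a$ as a chain representative of $h_1$. For $h_2$, the condition $b\in\bigcap_i\J_i^\varphi$ means $b$ pairs trivially with every generator of $L_i^\varphi$, so in each $C_i$ the class $b$ extends to a 2-chain $F_i$ with $\partial F_i=b$ in $\Sigma$ (plus a correction in $\Sigma_\partial$). Since each $X_i$ is a 1-handlebody, hence has vanishing $H_2$, the closed relative 2-cycle $F_{i-1}\cup F_i$ in $\partial X_i$ bounds a 3-chain $W_i$ in $X_i$; we may choose $W_i$ so that near $\Sigma$ it agrees with $b\times S_i$ inside $\Sigma\times S_i$. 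Gluing the $W_i$'s along the $F_i$'s produces a relative 3-cycle in $(X,\partial X)$ which, by tracing through the retraction, represents the class coming from $(b,\ldots,b)\in\bigoplus_i(\J_{i-1}^\varphi\cap\J_i^\varphi)$ in the complex $\Cb$.

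With these representatives, the intersection is localized near $\Sigma$: pushing $a$ slightly into the interior of a single sector $X_j$ moves it to $\Sigma\times\{p\}$ for some $p$ in the interior of $S_j$, and the only piece of the 3-cycle that can meet it is $W_j=b\times S_j$, which does so transversely at $(a\cap b)\times\{p\}$ with signs matching the surface pairing. Working equivariantly on the $\varphi$-covering reproduces exactly $\langle a,b\rangle_\Sigma^\varphi$. Case 2 is dual: the 3-cycle $h_2\in H_3^\varphi(X)$ is built from $b\in\bigcap_i L_i^\varphi$ by the same construction (now the 2-chains $F_i$ in $C_i$ arise from a defining disk collection for $L_i$), while the 1-chain representing $h_1\in H_1^\varphi(X,\partial X)$ is read from $H_1^\varphi(\Sigma',\partial\Sigma)$, or $H_1^\varphi(\Sigma',*)$ when $X$ is closed; the local intersection calculation is identical. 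The main technical hurdle I anticipate is the precise identification of the geometric 3-chain $\bigcup_i W_i$ with the algebraic class $(b,\ldots,b)$ in the relative complex, including control of the boundary behavior of the $F_i$'s on $\partial X$ so that no extra intersection points appear from outside the tubular neighborhood of $\Sigma$; once this identification is in place, the theorem reduces to a one-line local intersection count.
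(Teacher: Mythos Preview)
Your approach is correct and shares the same geometric core as the paper's argument, but you work harder than necessary. The paper observes that the $3$--cycle representing $h_2$ can be taken to be a linear combination of the $3$--cells $\Bb_i$ of the CW structure (Lemma~\ref{lem:Retract} and its relative analog), and these balls meet $\overline{\Sigma}$ precisely along the double curves representing $b$; near $\Sigma$ the $3$--cycle therefore looks like $b\times D^2$ in the tubular neighborhood $\Sigma\times D^2$. Since the $1$--cycle $a$ lies on $\Sigma\times\{0\}$, the two chains are already transverse in $X$ exactly when $a$ and $b$ are transverse in $\Sigma$, and the local intersection signs agree. Hence, as the paper says, \emph{no isotopy is needed}: the intersection is read off directly on $\Sigma$.

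Your pushing of $a$ into a single sector $X_j$ is a valid maneuver but superfluous, and it is precisely this extra step that creates the ``main technical hurdle'' you flag (controlling the part of $W_j$ outside the tubular neighborhood). By leaving $a$ on $\Sigma$ and using the CW $3$--cells themselves rather than rebuilding the $W_i$ by hand, the paper's proof collapses to the single observation that a $1$--chain in $\Sigma$ and a $3$--chain of the form $b\times D^2$ meet transversely in $(a\cap b)\times\{0\}$. The identification of the geometric $3$--chain with the algebraic class $(b,\ldots,b)$ is then automatic from the cellular boundary map, which is exactly what Lemmas~\ref{lem:CompressionBody2} and~\ref{lem:JCompressionBody2} record.
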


\subsection*{Plan of the paper}
In Section~\ref{secprelim}, we recall the definitions of twisted homology, torsion and twisted intersection form. Sections~\ref{sec:AbsHomology} and~\ref{sec:RelHomology} are devoted to the twisted homology and torsion of a $4$--manifold with non-empty boundary, respectively absolute and relative. In Section~\ref{sec:IntForm}, we describe the intersection forms. Section~\ref{sec:HomologyClosed} treats the case of a closed $4$--manifold. Section~\ref{sec:Boundary} deals with the boundary: action in homology of the monodromy of the open book and homology of the boundary. Finally, in Section~\ref{sec:Exs}, we treat some examples.

\subsection*{Convention}
The notation we set above is assumed to be fixed for the remainder of the paper.  That is, $X$ is always multisected by $n$ hyper compression bodies $X_i$ which meet in compression bodies $C_i$, all of which are attached radially about the central fiber $\Sigma$.  Additionally, $Y=C_1\cup \cdots \cup C_n$ shall be referred to as the \emph{spine} of the multisection. Also, $\varphi:\Z[\pi_1(X)]\rightarrow R$ is a homomorphism to a commutative ring $R$. 
Throughout the paper, if $Z$ is a subset of a manifold~$M$, $\eta(Z)$ denotes a regular neighborhood of $Z$ in $M$.

\subsection*{Acknowledgments}
This work has its source in the Spring Trisectors Workshop organized in 2019 at Georgia University by David Gay, Jeffrey Meier and Alexander Zupan; we are deeply grateful to the organizers. We also thank the participants, especially David Gay and Peter Feller, for motivating conversations.

\section{Algebraic Preliminaries} \label{secprelim}

\subsection{Algebraic torsion}

We recall the algebraic setup, see \cite{Mi66} and \cite{Tu01} for further details and references. Let $\K$ be a field. If $V$ is a finite dimensional $\K$--vector space and $b$ and $c$ are two bases of $V$, we denote by $[b/c]$ 
the determinant of the matrix expressing the basis change from $b$ to $c$. The bases $b$ and $c$ are {\em equivalent} if $[b/c]=1$. Let $\Cc$ be a finite complex of finite dimensional $\K$--vector spaces:
$$
\Cc=( \Cc_m\ \xrightarrow{\partial_{m}}\ \Cc_{m-1}\ \longrightarrow  \ \cdots\ \xrightarrow{\partial_1}\ \Cc_0 ).
$$
A \emph{complex basis} of $\Cc$ is a family  $c=(c_m,\dots,c_0)$ where $c_i$ is a basis of  $\Cc_i$ for all $i\in\{0,\dots,m\}$. A \emph{homology basis} of $\Cc$ is a family $h=(h_m,\dots,h_0)$ where $h_i$ is a basis of the homology group $H_i(\Cc)$ for all $i\in\{0,\dots,m\}$.
If we have chosen a basis $b_j$ of the space of $j$--dimensional boundaries $B_j(\Cc)= \operatorname{Im} \partial_{j+1}$ for all $j\in \{0,\dots,m-1\}$, then a homology basis $h$ of $\Cc$ induces an equivalence class of bases $(b_ih_i)b_{i-1}$ of $\Cc_i$ for all $i$. 

The \emph{torsion} of the $\K$--complex $\Cc$, equipped with a complex basis $c$ and a homology basis $h$, is the scalar:
\begin{displaymath} 
\tau(\Cc;c,h)=  \prod_{i=0}^m \big[(b_ih_i)b_{i-1}/c_i\big]^{(-1)^{i+1}} \in \K^*.
\end{displaymath}
It is easily checked that this definition does not depend on the choice of $b_0,\dots,b_m$. 

\subsection{Twisted homology and Reidemeister torsion} 

Let $\widehat{X}$ denote the universal cover of $X$, and for any $Z\subset X$, let $\widehat{Z}$ denote the inverse image of $Z$ under the covering map $\widehat{X}\rightarrow X$ ($\widehat{Z}$ will usually not be the universal cover of $Z$). Then $\pi_1(X)$ acts on both $\abX$ and $\widehat{Z}$ by deck transformations, and we let $\varphi: \Z[\pi_1(X)]\rightarrow R$ denote a morphism into a commutative ring $R$, so that the chain groups are $$C_i^\varphi(X,Z)=C_i(\abX,\widehat{Z})\otimes_{\Z[\pi_1(X)]}R.$$  The corresponding homology groups are denoted $H_*^\varphi(X,Z)$. Note that $H_*^\varphi(X,Z)$ can be interpreted as the homology of the pair $(\overline X,\overline Z)$, where $\overline X$ is the cover of $X$ associated to $\ker\varphi$ and $\overline Z$ is the inverse image of $Z$ under the covering map $\overline X\rightarrow X$.

Now assume $R$ is a field. 
Let $\widehat{c}$ be a complex basis of the complex of free $\Z[\pi_1(X)]$--module $C(\widehat{X},\widehat Z)$ obtained by lifting each relative cell of $(X,Z)$ to $\widehat{X}$. Then $c=\widehat{c}\otimes 1$ is a complex basis of $C^{\varphi}(X,Z)$. 
\begin{definition}
Given a homology basis $h$ of $H^\varphi(X,Z)$, the torsion of $(X,Z;\varphi)$ is 
$$ \tau^\varphi(X,Z;h) = \tau(C^{\varphi}(X,Z);c,h) \in R /\pm \varphi(\pi_1(X)).$$
\end{definition}

The ambiguity in $\pm \varphi(\pi_1(X))$ is due to the different choices of lift and orientation of the cells.

\subsection{The equivariant intersection form}

Let $W$ be a compact oriented $m$--manifold, $R$ be a commutative ring and $\varphi:\Z[\pi_1(W)] \rightarrow R$ be a morphism. Assume $\partial W$ is decomposed as $\partial W=A\sqcup B$. 
For $q \in \{0,\dots,m\}$, the \emph{twisted intersection form of $W$} with coefficient in $R$, introduced by Reidemeister in \cite{Re39}, is the sesquilinear map
\begin{equation*} \label{equivform}
 \langle\cdot,\cdot\rangle_W^\varphi: H_q^\varphi(W,A) \times H_{m-q}^\varphi(W,B) \longrightarrow R 
\end{equation*}
defined by 
$$ \langle [x \otimes r] , [x' \otimes r'] \rangle_W^\varphi= \sum_{h \in \frac{H_1(W)}{\ker(\varphi)}} \langle x, h.x'\rangle_{\overline W} \, \varphi(h)rr',$$
where here we are abusing notation slightly by letting $\varphi$ denote the group homomorphism from $H_1(W)$ into the multiplicative group $R^\times$, $\overline{W}\twoheadrightarrow W$ is the covering associated with $\ker\varphi$ and $\langle\cdot,\cdot\rangle_{\overline W}$ stands for the algebraic intersection in $\overline{W}$. 
By Blanchfield's duality theorem \cite[Theorem 2.6]{Bl57}, if $W$ is smooth and $\varphi(H_1(W))$ is a free multiplicative subgroup of $R$, this form is non degenerate on $(H_q^\varphi(W,A)/\textrm{Tor}) \times (H_{m-q}^\varphi(W,B)/\textrm{Tor})$.

\section{The twisted absolute homology groups and torsion} \label{sec:AbsHomology}

In this section we derive chain complexes for the twisted homology groups $H^\varphi_*(X)$, assuming that $\partial X\neq \emptyset$. Recall that, in this case, $\partial\Sigma\neq\emptyset$.

\begin{definition}
 Let $V$ be a hyper compression body. A \emph{defining collection of balls} for $V$ is a collection $\Bb$ of $3$--balls properly embedded in $V$ such that $V\setminus\eta(\Bb)$ is a thickening of $\partial_-V$.
\end{definition}

We assume for the remainder of this section that a fixed choice of defining collections $\mathcal{D}_i$ and~$\mathcal{B}_i$ of disks and balls has been made for all $1\leq i\leq n$. 

The following lemmas provide a 3--complex onto which $X$ deformation retracts. Our calculations hinge on a careful understanding of how the cells of this complex are mirrored by simple closed curves on $\Sigma$.

\begin{lemma}\label{lem:CWStructure}
 The manifold $X$ retracts onto $\Sigma\cup\bigcup_{i=1}^n(\Dd_i\cup \Bb_i)$.
\end{lemma}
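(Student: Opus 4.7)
The plan is to construct the deformation retraction in two stages, using the handle-theoretic structure of the $X_i$'s and $C_i$'s.

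The key technical fact, applied to both $W=X_i$ (hyper compression body) and $W=C_i$ (compression body), is the following: \emph{if $W$ has non-empty $\partial_+W$ and $\Ee$ is a defining collection of co-cores ($3$-balls or $2$-disks), then $\partial_+W\cup\Ee$ is a strong deformation retract of $W$.} Indeed, the inclusion $\partial_+W\cup\Ee\hookrightarrow W$ is a weak homotopy equivalence---both spaces admit a common spine consisting of $\partial_-W$ with one cell attached per handle of $W$, as can be checked by comparing the handle decomposition $W=\partial_-W\times[0,1]\cup(\text{handles})$ with the dual description of $\partial_+W\cup\Ee$ obtained by capping off the surgered boundary with the co-cores---and as a CW cofibration it is then automatically a strong deformation retract.

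Stage~1 applies this to each $X_i$, producing retractions $r_i\colon X_i\to C_{i-1}\cup_\Sigma C_i\cup\Bb_i$. The $r_i$'s glue along the shared pieces $C_i=X_i\cap X_{i+1}$ (and $\Sigma=X_i\cap X_j$ for $|i-j|>1$) to yield a deformation retraction of $X$ onto $Y\cup\bigcup_i\Bb_i$, where $Y=\bigcup_iC_i$ is the spine. Stage~2 applies the fact to each $C_i$, producing retractions $\rho_i\colon C_i\to\Sigma\cup\Dd_i$; these glue across $\Sigma$ and extend trivially over each $\Bb_i$ to give the required retraction of $X$ onto $\Sigma\cup\bigcup_i(\Dd_i\cup\Bb_i)$.

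The main technical hurdle is to ensure compatibility of the various retractions at the interfaces: the $r_i$'s must agree on $C_i$, and the $\rho_j$'s must fix the attaching $2$-spheres $\partial\Bb_i$ so that their retractions extend over each $\Bb_i$ by the identity. Both compatibilities are arranged by putting the defining collections in the standard form described after Definition~\ref{def:Multisection} and depicted in Figure~\ref{fig:StandardDiagram}: since $(\Sigma;c_{i-1},c_i)$ is handleslide-diffeomorphic to a standard Heegaard diagram of $\partial_+X_i$, each co-core ball $\Bb_i^k$ can be chosen so that its boundary is a reducing $2$-sphere equal to the union of a disk in $\Dd_{i-1}$ and a disk in $\Dd_i$ glued along a common boundary curve on $\Sigma$. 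With this choice $\partial\Bb_i\subset\Sigma\cup\Dd_{i-1}\cup\Dd_i$ lies in the fixed set of the $\rho_j$'s, and the two stages assemble into the claimed deformation retraction.
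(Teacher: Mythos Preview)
Your two-stage argument mirrors the paper's proof exactly: first retract each $X_i$ onto $\partial_+X_i\cup\Bb_i$, obtaining a retraction $X\to Y\cup\bigcup_i\Bb_i$, then retract each $C_i$ onto $\Sigma\cup\Dd_i$. The paper, however, justifies the ``key technical fact'' more directly than you do: since $X_i\setminus\eta(\Bb_i)\cong\partial_-X_i\times I$ by the very definition of a defining collection, one simply collapses this product onto its $\partial_+$--side (and likewise for $C_i$); no detour through weak equivalences and cofibrations is needed.

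Your final paragraph about compatibility addresses a point the paper glosses over entirely---it simply composes the two stages without checking that the Stage~2 retractions extend over the balls~$\Bb_i$. Your observation that this requires $\partial\Bb_i\subset\Sigma\cup\Dd_{i-1}\cup\Dd_i$, and that such a choice is always available via the standard form of Figure~\ref{fig:StandardDiagram}, is correct and adds rigor the paper omits. Be aware, though, that the lemma as stated applies to defining collections that were fixed \emph{arbitrarily} beforehand, so strictly speaking your argument establishes it only for the particular compatible choice; since any two defining collections are related by isotopy and handle-slides inside the respective (hyper) compression bodies, the general statement follows, but you should add a sentence to that effect.
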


\begin{proof}
 By definition, $X_i\setminus\eta(\Bb_i)\cong \partial_-X_i\times I$, hence each $X_i$ retracts onto $\partial_+X_i\cup \Bb_i$, hence $X$ retracts onto $Y\cup \bigcup_i\Bb_i$.  Likewise each $C_i$ retracts onto $\partial_+C_i\cup \Dd_i=\Sigma\cup \Dd_i$, so that $Y$ further retracts down to $\Sigma\cup\bigcup_i\Dd_i$.
\end{proof}

\begin{lemma}\label{lem:Retract}
 The quad $(X,Y,\Sigma,*)$ deformation retracts on a CW--complex $(Z_3,Z_2,Z_1,Z_0)$, where $Z_0=*$, $Z_1$ is a bouquet of loops defining a basis of $H_1(\Sigma)$, $Z_2=Z_1\cup\Dd$, and $Z_3=Z_2\cup \Bb$.
\end{lemma}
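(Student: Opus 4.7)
My plan is to upgrade Lemma~\ref{lem:CWStructure} by further collapsing $\Sigma$ down to a bouquet of circles while dragging the disks and balls along. Concretely, Lemma~\ref{lem:CWStructure} produces a deformation retract of $X$ onto $K := \Sigma \cup \bigcup_i \Dd_i \cup \bigcup_i \Bb_i$ which, inspecting its proof, sends $Y$ onto $\Sigma \cup \bigcup_i \Dd_i$ and is stationary on $\Sigma$ and $*$. So it will suffice to further deformation retract the filtered space $(K, \Sigma \cup \bigcup_i \Dd_i, \Sigma, *)$ onto some $(Z_3, Z_2, Z_1, *)$ of the required form.

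First I build $Z_1$. Since $\partial X \neq \emptyset$, one has $\partial \Sigma \neq \emptyset$, so $\pi_1(\Sigma, *)$ is free of rank $r = 2g(\Sigma) + b(\Sigma) - 1$, and $\Sigma$ admits a spine $Z_1 \subset \Sigma$: a subcomplex which is a wedge of $r$ circles at $*$, whose homotopy classes form a basis of $\pi_1(\Sigma, *)$, hence of $H_1(\Sigma) \cong H_1(Z_1)$. Fix a strong deformation retraction $F : \Sigma \times I \to \Sigma$ of $\Sigma$ onto $Z_1$, stationary on $Z_1$.

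To extend $F$ to $K$, I put a CW structure on $K$ in which $\Sigma$ is a subcomplex containing $Z_1$ as well as each curve $\partial \Dd_i$ (which I arrange, up to isotopy, to lie in $\mathrm{Int}(\Sigma)$), each disk $D \in \Dd$ is a single $2$-cell attached along $\partial D$, and each ball $B \in \Bb$ is a single $3$-cell with cellular attaching map $\partial B \to \Sigma \cup \Dd$ avoiding a fixed collar of $\partial \Sigma$. The inclusion $\Sigma \hookrightarrow K$ is then a cofibration, and the homotopy extension property extends $F$ to $\tilde F : K \times I \to K$ with $\tilde F_0 = \mathrm{id}_K$. Using an explicit ``collar extension'' on each higher cell (identity on a deleted neighborhood of the attaching boundary, interpolating via $F$ along the collar), one arranges that $\tilde F_1$ is a retraction of $K$ onto a CW subspace $Z_3 = Z_1 \cup \Dd \cup \Bb$ with the same cells as $K$ but whose attaching maps are the originals post-composed with $F_1$. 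Setting $Z_0 = *$ and $Z_2 = Z_1 \cup \Dd$, the deformation $\tilde F$ manifestly respects the filtration, giving the required quad-level retract.

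The delicate step I anticipate is the construction of $\tilde F$: guaranteeing that its time-$1$ image is literally a subcomplex of $K$ with the advertised structure, rather than just a homotopy-equivalent quotient of $K$. This is what the collar extension achieves, provided the attaching data of the disks and balls has been pre-arranged to sit inside $\Sigma \cup \Dd$ away from $\partial \Sigma$ so the collapse of $\Sigma$ onto $Z_1$ is never obstructed by higher cells. Once this is in place, compatibility of the retract with the filtration $(X, Y, \Sigma, *) \supset (K, \Sigma\cup\bigcup_i\Dd_i, \Sigma, *)$ is automatic.
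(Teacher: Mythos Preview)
Your approach is essentially the same as the paper's: invoke Lemma~\ref{lem:CWStructure} to retract $(X,Y,\Sigma,*)$ onto $(K,\Sigma\cup\Dd,\Sigma,*)$, then further retract $\Sigma$ onto a spine $Z_1$ and carry the disks and balls along. The paper's proof is a two-line sketch of exactly this; your additional care with the homotopy extension and collar argument is more than the paper supplies, and while the claim that $\tilde F_1$ lands in a literal CW \emph{subspace} of $K$ is a bit delicate (the track of the collar homotopy lives in $\Sigma$, not just in $Z_1$), what is actually needed downstream---a filtered simple homotopy equivalence to the CW quad $(Z_3,Z_2,Z_1,Z_0)$---is clearly achieved.
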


\begin{proof}
 Retracting as in Lemma~\ref{lem:CWStructure}, we first get $Z_1=\Sigma$ and $Z_2=\Sigma\cup\Dd$. The result follows by further retracting $\Sigma$.
\end{proof}

\begin{corollary} \label{cor:complexX}
 The twisted homology of $X$ is the homology of the following complex.
 \begin{equation}\label{CellularXComplex}\tag{$\Cc'$}
 0\rightarrow H_3^\varphi(X,Y)\rightarrow H_2^\varphi(Y,\Sigma)\rightarrow H_1^\varphi(\Sigma,*)\rightarrow H_0^\varphi(*)
 \end{equation}
 Moreover, if $R$ is a field, there are complex bases $c$ of \ref{CellularXComplex} such that $\tau^\varphi(X;h)=\tau(\Cc';c,h)$ for any homology basis $h$ of $X$ and \ref{CellularXComplex}.
\end{corollary}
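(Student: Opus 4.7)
The plan is to identify $(\Cc')$ with the cellular chain complex of the filtered CW--complex $Z_3$ provided by Lemma~\ref{lem:Retract}, then invoke the standard fact that the cellular chain complex computes twisted homology.

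First, Lemma~\ref{lem:Retract} yields a deformation retraction of the quad $(X, Y, \Sigma, *)$ onto $(Z_3, Z_2, Z_1, Z_0)$, hence isomorphisms $H_j^\varphi(X, Y) \cong H_j^\varphi(Z_3, Z_2)$, $H_j^\varphi(Y, \Sigma) \cong H_j^\varphi(Z_2, Z_1)$, and $H_j^\varphi(\Sigma, *) \cong H_j^\varphi(Z_1, Z_0)$, all compatible with the connecting homomorphisms of the corresponding triples. For each $k \in \{1, 2, 3\}$, the pair $(Z_k, Z_{k-1})$ has only $k$--cells---the loops of the bouquet for $k=1$, the disks $\Dd_i$ for $k=2$, and the balls $\Bb_i$ for $k=3$---so $H_j^\varphi(Z_k, Z_{k-1}) = 0$ for $j \neq k$, and $H_k^\varphi(Z_k, Z_{k-1})$ is free over $R$ with basis obtained by lifting each $k$--cell to $\abX$ and tensoring with $1 \in R$.

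Second, the cellular chain complex of the CW--complex $Z_3$ in twisted $R$--coefficients is by definition the sequence $\bigl(H_k^\varphi(Z_k, Z_{k-1})\bigr)_k$, with differentials equal to the connecting homomorphisms of the triples $(Z_k, Z_{k-1}, Z_{k-2})$; via the identifications of the preceding paragraph this is exactly $(\Cc')$. Since $Z_3 \simeq X$ as a space, this cellular complex computes $H_*^\varphi(Z_3) = H_*^\varphi(X)$, yielding the first assertion.

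For the torsion statement, take as complex basis $c$ of $(\Cc')$ the cellular basis just described. Under the identifications with the CW--filtration, $c$ is the image of a standard cellular $\Z[\pi_1(X)]$--basis of $C(\widehat{Z_3})$ under the tensoring with $R$ along $\varphi$; so by the definition of Reidemeister torsion recalled in Section~\ref{secprelim}, one obtains $\tau(\Cc'; c, h) = \tau^\varphi(Z_3; h) = \tau^\varphi(X; h)$ for any homology basis $h$. The main (and rather mild) obstacle is the bookkeeping: one must verify that the boundary maps of $(\Cc')$, which are defined via the triples for $(X, Y, \Sigma, *)$, agree with the cellular differentials of $Z_3$ under the retraction. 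This is naturality of the connecting homomorphism with respect to homotopy equivalences of triples, but it deserves to be stated explicitly.
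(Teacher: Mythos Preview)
Your proof is correct and follows essentially the same route as the paper: identify $(\Cc')$ with the cellular chain complex of the CW--complex $Z_3$ from Lemma~\ref{lem:Retract} via the deformation retraction of the quad, then read off homology and torsion from the cellular complex. The only small point worth tightening is that the equality $\tau^\varphi(Z_3;h)=\tau^\varphi(X;h)$ requires simple homotopy equivalence, not just homotopy equivalence; the paper notes this explicitly, and you may wish to observe that the deformation retraction provides it.
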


\begin{proof}
 Lemma \ref{lem:Retract} shows that the complex above is isomorphic to the cellular homology complex $$0\rightarrow H_3^\varphi (Z_3,Z_2)\rightarrow H_2^\varphi (Z_2,Z_1)\rightarrow H_1^\varphi (Z_1,Z_0)\rightarrow H_0^\varphi(Z_0)$$ via the map induced by the inclusion $Z_3\hookrightarrow X$.  Indeed, since $Z_3$ is a deformation retract of~$X$, it is simply homotopic to~$X$. Hence the assertion on torsion follows by choosing any basis $c$ which is the image of a cellular basis of $Z_3$ under the inclusion map.
\end{proof}

\begin{definition} 
 Let $L_i^\varphi$ denote the submodule of $H_1^\varphi(\Sigma,*)$ generated by the twisted homology classes of the components of $c_i$.
\end{definition}

Following the approach of \cite{FM}, we shall express the complex (\ref{CellularXComplex}) in terms of these submodules. They have the following homological interpretation.

\begin{lemma}
 The module $L_i^\varphi$ naturally identifies with the kernel of the inclusion map $\iota_*:H_1^\varphi(\Sigma)\to H_1^\varphi(C_i)$.
\end{lemma}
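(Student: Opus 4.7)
The plan is to apply the long exact sequence of the pair $(C_i, \Sigma)$ in twisted homology. By Lemma~\ref{lem:CWStructure}, $C_i$ deformation retracts onto $\Sigma \cup \Dd_i$, and the relative CW complex $(\Sigma \cup \Dd_i, \Sigma)$ has cells only in dimension $2$ (one per disk of $\Dd_i$). So $H_2^\varphi(C_i, \Sigma)$ is a free $R$-module of rank $|\Dd_i|$, and all other relative twisted homology groups vanish. The long exact sequence then collapses to
\[
H_2^\varphi(C_i, \Sigma) \xrightarrow{\partial} H_1^\varphi(\Sigma) \xrightarrow{\iota_*} H_1^\varphi(C_i) \to 0,
\]
identifying $\ker \iota_*$ with $\Im \partial$.

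The relative cellular boundary sends each generator (a lifted disk) to the class of the corresponding boundary curve $\partial D \subset c_i$, so $\Im \partial$ is the $R$-submodule of $H_1^\varphi(\Sigma)$ generated by the twisted homology classes of the components of $c_i$. This makes sense as an absolute class because each $c \in c_i$ bounds a disk in $C_i \subset X$ and is therefore nullhomotopic in $X$; hence any of its lifts to $\widehat{X}$ is a closed loop contained in $\widehat \Sigma$, defining a well-defined class in $H_1^\varphi(\Sigma)$.

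To match this with $L_i^\varphi \subset H_1^\varphi(\Sigma, *)$, I would use that the map $j_*\colon H_1^\varphi(\Sigma) \to H_1^\varphi(\Sigma, *)$ is injective, since $H_1^\varphi(*) = 0$. By construction, $j_*$ carries the generators of $\Im \partial$ to the corresponding generators of $L_i^\varphi$, so it restricts to an isomorphism $\ker \iota_* \xrightarrow{\sim} L_i^\varphi$. The main point requiring attention is precisely this compatibility between the absolute and relative conventions: the kernel naturally sits in $H_1^\varphi(\Sigma)$ while $L_i^\varphi$ is defined in $H_1^\varphi(\Sigma,*)$, and it is the nullhomotopy of each $c \in c_i$ in $X$ that lets the generators of $L_i^\varphi$ be lifted back to $H_1^\varphi(\Sigma)$ and so justifies the ``natural identification'' asserted in the statement.
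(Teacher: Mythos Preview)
Your proof is correct. The paper argues more directly: since the curves in $c_i$ bound disks in $C_i$, one has $L_i^\varphi\subset\ker(\iota_*)$, and since cutting $C_i$ along those disks leaves a thickened $\partial_-C_i$ (so that nothing else in $H_1^\varphi(\Sigma)$ can die), the reverse inclusion follows. Your route via the long exact sequence of the pair $(C_i,\Sigma)$ and the identification $\ker(\iota_*)=\Im\partial$ is a more formal packaging of the same geometric input, and it is in fact exactly the argument the paper deploys in the very next lemma to show $H_2^\varphi(C_i,\Sigma)\cong L_i^\varphi$; you have essentially merged the two lemmas into one computation. A small bonus of your version is that you make explicit the passage between $H_1^\varphi(\Sigma)$ and $H_1^\varphi(\Sigma,*)$ via the injection $j_*$ and the nullhomotopy of the $c_i$ in $X$---the paper leaves this implicit here and only flags it later as a mild abuse of notation.
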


\begin{proof}
 Since the components of $c_i$ bound disks in $C_i$, it is clear that $L_i^\varphi\subset \ker(\iota_*)$; since these disks cut $C_i$ into a thickened $\partial_-C_i$, the reverse inclusion follows.
\end{proof}

\begin{lemma}\label{lem:CompressionBody1}
 For all $i$, $H_2^\varphi(C_i,\Sigma)\cong L_i^\varphi$.
\end{lemma}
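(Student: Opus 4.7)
The plan is to apply the long exact sequence of the pair $(C_i,\Sigma)$ in twisted homology and match the resulting kernel against the previous lemma, which identifies $L_i^\varphi$ with $\ker(\iota_*\colon H_1^\varphi(\Sigma)\to H_1^\varphi(C_i))$. The argument then reduces to showing that $H_2^\varphi(C_i)=0$, after which the isomorphism falls out by exactness.

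First I would establish the vanishing $H_2^\varphi(C_i)=0$. Because $\partial X\neq\emptyset$, the standing convention forces $\Sigma_\partial=\partial_-C_i$ to be a compact surface with no closed component, hence it deformation retracts onto a 1--complex. Since $C_i$ is a lensed hyper compression body built from $\Sigma_\partial\times I$ by attaching only 1--handles (equivalently, collapsing along the cores of a defining collection of disks $\Dd_i$ contains the homotopical content), $C_i$ itself deformation retracts onto a 1--complex. The retraction lifts to the cover $\widehat{X}$, so the twisted cellular complex of $C_i$ is concentrated in degrees~0 and~1, forcing $H_2^\varphi(C_i)=0$.

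Next I would plug this into the twisted long exact sequence of $(C_i,\Sigma)$:
\[
0=H_2^\varphi(C_i)\longrightarrow H_2^\varphi(C_i,\Sigma)\xrightarrow{\;\partial\;} H_1^\varphi(\Sigma)\xrightarrow{\;\iota_*\;} H_1^\varphi(C_i).
\]
By exactness, $\partial$ is injective with image $\ker(\iota_*)$, giving $H_2^\varphi(C_i,\Sigma)\cong\ker(\iota_*)$. The preceding lemma then identifies $\ker(\iota_*)$ with $L_i^\varphi$ (viewed inside $H_1^\varphi(\Sigma,*)$ via the injection $H_1^\varphi(\Sigma)\hookrightarrow H_1^\varphi(\Sigma,*)$, which comes from the pair $(\Sigma,*)$ together with $H_1^\varphi(*)=0$), and the lemma follows.

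There is no real obstacle: the only delicate point is making sure that $H_2^\varphi(C_i)=0$, and this is where the hypothesis $\partial X\neq\emptyset$ enters essentially, since without it $\Sigma_\partial$ could have closed components and $C_i$ would not retract to a 1--complex. Once this vanishing is in hand, the isomorphism is a formal consequence of the long exact sequence and the preceding lemma.
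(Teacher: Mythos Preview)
Your proposal is correct and follows essentially the same route as the paper: both arguments feed the long exact sequence of the pair $(C_i,\Sigma)$ into the preceding lemma identifying $L_i^\varphi$ with $\ker(\iota_*)$. The only difference is which auxiliary vanishing is invoked to make the connecting map $\partial$ injective: the paper observes that $C_i$ deformation retracts onto $\Sigma\cup\Dd_i$, so the relative pair has only $2$--cells and $H_1^\varphi(C_i,\Sigma)=0$; you instead argue that $C_i$ itself retracts onto a $1$--complex (using $\partial X\neq\emptyset$), so $H_2^\varphi(C_i)=0$. Your version is arguably cleaner, since the injectivity of $\partial$ is exactly what is needed and it follows immediately from $H_2^\varphi(C_i)=0$. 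One small terminological slip: $C_i$ is a (lensed) \emph{compression body}, not a hyper compression body --- the latter term is reserved in the paper for the $4$--dimensional pieces $X_i$.
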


\begin{proof}
 Since $C_i$ deformation-retracts onto $\Sigma\cup\Dd_i$, we have $H_1^\varphi(C_i,\Sigma)=0$, so that the exact sequence of the pair $(C_i,\Sigma)$ gives $H_2^\varphi(C_i,\Sigma)\cong\ker \big(H_1^\varphi(\Sigma)\to H_1^\varphi(C_i)\big)$. 
\end{proof}

\begin{lemma}\label{lem:CompressionBody2}
 For all $i$, $H_3^\varphi(X_i,C_{i-1}\cup C_i)\cong L_{i-1}^\varphi\cap L_i^\varphi.$
\end{lemma}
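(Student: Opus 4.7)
The approach is to reduce via the long exact sequence of the pair $(X_i, C_{i-1}\cup C_i)$ to computing $H_2^\varphi(C_{i-1}\cup C_i)$, which we can then identify with $L_{i-1}^\varphi \cap L_i^\varphi$ through the Mayer--Vietoris sequence of the Heegaard splitting $\partial_+X_i=C_{i-1}\cup_\Sigma C_i$.

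First I would establish that $H_k^\varphi(X_i)=0$ for all $k\geq 2$. The hyper compression body structure deformation-retracts $X_i$ onto $\partial_-X_i$ with the arc cores of the $4$--dimensional $1$--handles attached; then retracting $\partial_-X_i\cong \Sigma_\partial\times I/\sim$ onto $\Sigma_\partial$ realizes $X_i$ as being homotopy equivalent to $\Sigma_\partial$ with arcs glued on, a $2$--complex. Since $\Sigma_\partial$ has no closed component by our standing assumption, it is homotopy equivalent to a graph, hence has no twisted homology in degrees $\geq 2$, and attaching $1$--cells preserves this. An identical argument applied to the $3$--dimensional compression body $C_j$ (retracting onto $\partial_-C_j\cong\Sigma_\partial$ with arcs glued on) gives $H_k^\varphi(C_j)=0$ for $k\geq 2$. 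And $H_2^\varphi(\Sigma)=0$ because $\Sigma$ has nonempty boundary.

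With these vanishings, the pair long exact sequence yields an isomorphism $H_3^\varphi(X_i, C_{i-1}\cup C_i)\cong H_2^\varphi(C_{i-1}\cup C_i)$, and the Mayer--Vietoris sequence for $C_{i-1}\cup_\Sigma C_i$ reduces to an injection
\[ 0\longrightarrow H_2^\varphi(C_{i-1}\cup C_i)\xrightarrow{\partial_{\mathrm{MV}}} H_1^\varphi(\Sigma)\longrightarrow H_1^\varphi(C_{i-1})\oplus H_1^\varphi(C_i). \]
The image of $\partial_{\mathrm{MV}}$ is $\ker\bigl(H_1^\varphi(\Sigma)\to H_1^\varphi(C_{i-1})\oplus H_1^\varphi(C_i)\bigr)$, which by the identification $L_j^\varphi=\ker(H_1^\varphi(\Sigma)\to H_1^\varphi(C_j))$ established in the lemma preceding Lemma~\ref{lem:CompressionBody1} is precisely $L_{i-1}^\varphi\cap L_i^\varphi$.

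The main technical subtlety lies in justifying the vanishing statements in the twisted setting: this reduces to the observation that a deformation retraction between subspaces of $X$ lifts to a $\pi_1(X)$--equivariant deformation retraction of the preimages in $\widehat{X}$, and so induces an isomorphism on $H_*^\varphi$.
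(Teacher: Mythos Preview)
Your proof is correct and follows essentially the same route as the paper: both reduce via the long exact sequence of the pair $(X_i,C_{i-1}\cup C_i)$ to computing $H_2^\varphi(C_{i-1}\cup C_i)$, then identify this with $L_{i-1}^\varphi\cap L_i^\varphi$. The only cosmetic difference is that the paper uses the long exact sequence of the pair $(C_{i-1}\cup C_i,\Sigma)$ together with Lemma~\ref{lem:CompressionBody1} (the identification $H_2^\varphi(C_j,\Sigma)\cong L_j^\varphi$), whereas you use Mayer--Vietoris and the kernel description of $L_j^\varphi$; these are equivalent packagings of the same information.
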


\begin{proof}
 Since $X_i$ is a $4$--dimensional $1$--handlebody, its order $2$ and $3$ homology is trivial, and the exact sequence of the pair $(X_i,C_{i-1}\cup C_i)$ gives $H_3^\varphi(X_i,C_{i-1}\cup C_i)\cong H_2^\varphi(C_{i-1}\cup C_i)$. Now the exact sequence of the pair $(C_{i-1}\cup C_i,\Sigma)$ gives 
 $$0\to H_2^\varphi(C_{i-1}\cup C_i)\to H_2^\varphi(C_{i-1},\Sigma)\oplus H_2^\varphi(C_i,\Sigma)\to H_1^\varphi(\Sigma).$$
 Conclude with Lemma~\ref{lem:CompressionBody1}.
\end{proof}

\begin{theorem}\label{thm:XHomology}
 The homology of $X$ is given by the chain complex 
 \begin{equation} \label{complexforX} \tag{$\Cc$}
  0\rightarrow \bigoplus_{i=1}^n(L_{i-1}^\varphi\cap L_i^\varphi)\xrightarrow{\partial_2} \bigoplus_{i=1}^n L_i^\varphi\xrightarrow{\partial_1} H_1^\varphi(\Sigma,*)\xrightarrow{\partial_0} H_0^\varphi(*)
 \end{equation}
 where $\partial_2\big((x_i)_{1\leq i\leq n}\big)=(x_i-x_{i+1})_{1\leq i\leq n}$ and $\partial_1\big((x_i)_{1\leq i\leq n}\big)=\sum_{i=1}^n x_i$.
 Moreover, if $R$ is a field, there are complex bases $c$ of \ref{complexforX} such that $\tau^\varphi(X;h)=\tau(\Cc;c,h)$ for any homology basis $h$ of $X$ and \ref{complexforX}.
\end{theorem}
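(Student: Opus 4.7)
The plan is to identify the complex $(\Cc)$ as an algebraic reformulation of the complex $(\Cc')$ of Corollary~\ref{cor:complexX}. The module identifications come from Lemmas~\ref{lem:CompressionBody1} and~\ref{lem:CompressionBody2} after splitting the pairs by excision, and the substance of the proof lies in tracing the boundary maps through these identifications.

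First, I would decompose the middle terms of $(\Cc')$. Since $Y=\bigcup_i C_i$ with $C_i\cap C_j=\Sigma$ for $i\neq j$, excising small regular neighborhoods of $\Sigma$ in each $C_i$ (or applying Mayer--Vietoris iteratively) yields
\[
H_2^\varphi(Y,\Sigma)\;\cong\;\bigoplus_{i=1}^{n} H_2^\varphi(C_i,\Sigma)\;\cong\;\bigoplus_{i=1}^{n} L_i^\varphi,
\]
where the last step is Lemma~\ref{lem:CompressionBody1}. Similarly, $X_i\cap Y=C_{i-1}\cup C_i$, and $X_i\cap X_j\subset\Sigma\subset Y$ when $|i-j|>1$, so excision around each $X_i$ gives
\[
H_3^\varphi(X,Y)\;\cong\;\bigoplus_{i=1}^{n} H_3^\varphi(X_i,\,C_{i-1}\cup C_i)\;\cong\;\bigoplus_{i=1}^{n}\bigl(L_{i-1}^\varphi\cap L_i^\varphi\bigr),
\]
using Lemma~\ref{lem:CompressionBody2}. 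This matches the modules appearing in $(\Cc)$; the term $H_0^\varphi(*)$ is unchanged, and $H_1^\varphi(\Sigma)\cong H_1^\varphi(\Sigma,*)$ since $*$ is a point in the interior of the connected surface $\Sigma$.

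Next, I would read off the differentials. The map $\partial_1$ is the connecting homomorphism of the pair $(Y,\Sigma)$ composed with the quotient $H_1^\varphi(\Sigma)\to H_1^\varphi(\Sigma,*)$. Under the identification of Lemma~\ref{lem:CompressionBody1}, its restriction to the $i$-th summand is simply the inclusion $L_i^\varphi\hookrightarrow H_1^\varphi(\Sigma,*)$, since the connecting map of $(C_i,\Sigma)$ factors through Lemma~\ref{lem:CompressionBody1} as the identity. Summing gives $\partial_1((x_i))=\sum_i x_i$. For $\partial_2$, I would work with one generator at a time: a ball $B\in\Bb_i$ gives a class in $H_3^\varphi(X_i,C_{i-1}\cup C_i)$; its boundary is a $2$-sphere properly embedded in $\partial_+X_i=C_{i-1}\cup_\Sigma C_i$, which splits along a curve $\gamma\subset\Sigma$ into two disks $D_{i-1}\subset C_{i-1}$ and $D_i\subset C_i$, representing classes $x$ and $-x$ respectively in $L_{i-1}^\varphi$ and $L_i^\varphi$ for the common class $x\in L_{i-1}^\varphi\cap L_i^\varphi$ corresponding to $B$ under Lemma~\ref{lem:CompressionBody2}. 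Assembling this contribution for all $i$ (and fixing the signs of the isomorphisms of Lemma~\ref{lem:CompressionBody2} consistently with the boundary orientations of the $C_i$) yields exactly $\partial_2((x_i))=(x_i-x_{i+1})_{1\leq i\leq n}$.

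Finally, for the torsion assertion: by Lemma~\ref{lem:Retract}, the inclusion $Z_3\hookrightarrow X$ is a simple homotopy equivalence, so $\tau^\varphi(X;h)$ equals the torsion of the cellular complex of $Z_3$ with coefficients twisted by $\varphi$, taken with respect to a basis obtained by lifting cells. The isomorphisms from excision and from Lemmas~\ref{lem:CompressionBody1} and~\ref{lem:CompressionBody2} are all induced at the chain level: a basis of disks in $\Dd_i$ maps to a basis of $L_i^\varphi$, and a basis of balls in $\Bb_i$ maps to a basis of $L_{i-1}^\varphi\cap L_i^\varphi$. These choices determine a complex basis $c$ of $(\Cc)$ for which $\tau(\Cc;c,h)=\tau^\varphi(X;h)$, up to the standard ambiguity $\pm\varphi(\pi_1(X))$.

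The main obstacle I expect is purely bookkeeping: choosing compatible orientations on $\Sigma$, on the disks of $\Dd_i$, and on the balls of $\Bb_i$ so that the connecting homomorphisms produce the precise sign $x_i-x_{i+1}$ in $\partial_2$ (rather than its opposite), while simultaneously making the identifications of $L_i^\varphi$ in different summands agree. Once these orientation conventions are fixed once and for all, the homological identification and the torsion statement both follow formally from the chain-level isomorphism between the cellular complex of $Z_3$ and~$(\Cc)$.
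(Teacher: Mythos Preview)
Your proposal is correct and follows exactly the paper's approach: reduce to the complex $(\Cc')$ of Corollary~\ref{cor:complexX}, split its middle terms via excision as $H_2^\varphi(Y,\Sigma)\cong\bigoplus_i H_2^\varphi(C_i,\Sigma)$ and $H_3^\varphi(X,Y)\cong\bigoplus_i H_3^\varphi(X_i,C_{i-1}\cup C_i)$, and then invoke Lemmas~\ref{lem:CompressionBody1} and~\ref{lem:CompressionBody2}; the paper's proof is a one-line pointer to these ingredients, while you have additionally spelled out the boundary maps and the torsion bookkeeping. One small slip: your aside that $H_1^\varphi(\Sigma)\cong H_1^\varphi(\Sigma,*)$ is not generally true in twisted homology (only an inclusion), but this is irrelevant since both $(\Cc')$ and $(\Cc)$ already carry $H_1^\varphi(\Sigma,*)$ in degree~$1$.
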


\begin{proof}
 Since $H_2^\varphi(Y,\Sigma)\cong\oplus_i H_2^\varphi(C_i,\Sigma)$ and $H_3^\varphi(X,Y)\cong\oplus_i H_3^\varphi(X_i,C_{i-1}\cup C_i)$, we can conclude with Corollary~\ref{cor:complexX} and Lemmas~\ref{lem:CompressionBody1} and~\ref{lem:CompressionBody2}.
\end{proof}

\begin{remark}\label{rmk:CellularBasis}
As noted in the proof of Corollary~\ref{cor:complexX}, the bases of $\Cc'$ that can be used to calculate torsions of $X$ are images of cellular bases under the map induced by inclusion $Z_3\hookrightarrow X$. The maps described in Lemmas~\ref{lem:CompressionBody1} and~\ref{lem:CompressionBody2} then define an isomorphism from $\Cc'$ to $\Cc$, and the valid bases of $\Cc$ are then the images of valid bases of $\Cc'$ under this map. This yields the following more concrete description of what valid torsion bases $c$ look like for $\Cc$:
\begin{itemize}
 \item $c_0$ is given by the basepoint $*$,
 \item $c_1$ is defined by any set of loops which form a spine of $\Sigma$,
 \item $c_2$ is any basis corresponding to a tuple of defining curves $(c_i)_{1\leq i\leq n}$,
 \item $c_3$ is any basis corresponding to a tuple of ``double curves" for the pairs $(c_{i-1},c_i)$.
\end{itemize}
By a ``double curve'' for a pair $(c_{i-1},c_i)$, we mean any curve on $\Sigma$ which simultaneously bounds disks in $C_{i-1}$ and $C_i$. The constraints on multisection diagrams imply that $L_{i-1}^\varphi\cap L_i^\varphi$ admits bases represented by double curves, see Figure~\ref{fig:StandardDiagram}.

It might not be easy to find a system of double curves from a diagram, since it implies some possibly unobvious handleslides. However, in practice, it might not be necessary. For instance, the twisting map $\varphi$ often comes from a group morphism $\pi_1(X)\to G$, which induces ring morphisms from $\Z[\pi_1(X)]$ to the group ring $R=\Z[G]$ and to its quotient field $\F=\Q(G)$. One may then choose any bases over $R$ and get a well-defined torsion in $\F^*/R^\times$.
\end{remark}

\begin{corollary}\label{cor:ExplicitHomology}
 We have the following expressions:
 $$H_1^\varphi(X)\cong H_1^\varphi(\Sigma)/(\oplus_i L_i^\varphi),\qquad H_3^\varphi(X)\cong \cap_i L_i^\varphi,$$
 where we slightly abuse notation by viewing $L_i^\varphi\subset H_1^\varphi(\Sigma)\subset H_1^\varphi(\Sigma,*)$.
\end{corollary}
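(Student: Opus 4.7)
The plan is to read both isomorphisms directly off the complex $(\Cc)$ of Theorem~\ref{thm:XHomology}, since all of the relevant computation is algebraic.

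For $H_3^\varphi(X)$, observe that the complex $(\Cc)$ is zero to the left of $\bigoplus_{i}(L_{i-1}^\varphi\cap L_i^\varphi)$, so $H_3^\varphi(X)=\ker\partial_2$. A tuple $(x_i)_{1\leq i\leq n}$ with $(x_i-x_{i+1})_{1\leq i\leq n}=0$ is precisely a constant tuple $(x,\ldots,x)$, and such a tuple lies in $\bigoplus_i(L_{i-1}^\varphi\cap L_i^\varphi)$ if and only if $x\in L_{i-1}^\varphi\cap L_i^\varphi$ for every $i$, equivalently $x\in\bigcap_i L_i^\varphi$. Sending $(x,\ldots,x)$ to $x$ gives the desired identification.

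For $H_1^\varphi(X)=\ker\partial_0/\operatorname{Im}\partial_1$, the image of $\partial_1$ is $\sum_i L_i^\varphi$ viewed inside $H_1^\varphi(\Sigma,*)$ via the inclusion. To identify $\ker\partial_0$, I would use the long exact sequence of the pair $(\Sigma,*)$ in $\varphi$-twisted coefficients:
\[
H_1^\varphi(*)\to H_1^\varphi(\Sigma)\to H_1^\varphi(\Sigma,*)\xrightarrow{\partial_0} H_0^\varphi(*).
\]
Since $*$ is a point, its twisted chain complex is concentrated in degree~$0$, so $H_1^\varphi(*)=0$ and the map $H_1^\varphi(\Sigma)\hookrightarrow H_1^\varphi(\Sigma,*)$ is injective with image $\ker\partial_0$. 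As each $L_i^\varphi$ is generated by classes of closed curves in $\Sigma\setminus\{*\}$, it lies inside this image, and the quotient $\ker\partial_0/\operatorname{Im}\partial_1$ becomes $H_1^\varphi(\Sigma)/\sum_i L_i^\varphi$, which is what the statement denotes $H_1^\varphi(\Sigma)/(\oplus_i L_i^\varphi)$.

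There is no real obstacle here; the only point requiring a moment of care is confirming that the cellular boundary $\partial_0$ of the complex coming from Corollary~\ref{cor:complexX} coincides with the connecting homomorphism of $(\Sigma,*)$, which is standard, and that $H_1^\varphi(*)=0$ in the twisted setting, which is immediate from the fact that the lift $\widehat{*}$ is a discrete set of points and so its twisted chain complex vanishes in positive degrees.
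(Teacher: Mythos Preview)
Your proof is correct and follows essentially the same approach as the paper: the paper's one-line argument simply cites the long exact sequence of the pair $(\Sigma,*)$ to identify $\ker\partial_0$ with $H_1^\varphi(\Sigma)$, and leaves the $H_3$ computation implicit. You have merely spelled out in full the details that the paper takes for granted.
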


\begin{proof}
 For $H_1$, the pair $(\Sigma,*)$ gives $H_1^\varphi(\Sigma)\cong\ker\big(H_1^\varphi(\Sigma,*)\to H_0^\varphi(*)\big)$.
\end{proof}

A satisfying point in Theorem~\ref{thm:XHomology} is that the modules of the complex \ref{complexforX} are free.

\begin{lemma} \label{lem:FreeModules}
 The modules $H_1^\varphi(\Sigma,*)$ and $L_i^\varphi$ are free $R$--modules of rank $2g+b-1$ and $p$ respectively, where $g$ is the genus of $\Sigma$, $b$ is its number of boundary components and $p$ is the number of curves each collection $c_i$. The modules $L_{i-1}^\varphi\cap L_i^\varphi$ are also free.
\end{lemma}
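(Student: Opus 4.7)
The plan is to derive each freeness assertion from a cellular computation, identifying the module in question as the top-dimensional chain group of a CW pair in which no higher-dimensional relative cells appear; this way freeness follows automatically and bypasses any question about whether intersections of free submodules remain free over a general commutative ring~$R$. Throughout I will use that for any CW pair $(Y,A)$ inside~$X$, lifting each relative cell of $(Y,A)$ to $\widehat{X}$ yields a free $\Z[\pi_1(X)]$-basis of $C_*(\widehat{Y},\widehat{A})$, so after tensoring with $R$ each $C_k^\varphi(Y,A)$ is $R$-free of rank equal to the number of relative $k$-cells.

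First, I would treat $H_1^\varphi(\Sigma,*)$ via the retraction of $(\Sigma,*)$ onto $(Z_1,*)$ furnished by Lemma~\ref{lem:Retract}, where $Z_1$ is a bouquet of $2g+b-1$ loops based at~$*$. The twisted cellular chain complex of $(Z_1,*)$ has $C_0^\varphi=0$ and $C_1^\varphi\cong R^{2g+b-1}$ with no higher cells, so $H_1^\varphi(\Sigma,*)\cong R^{2g+b-1}$. For $L_i^\varphi$, I would invoke Lemma~\ref{lem:CompressionBody1} to identify it with $H_2^\varphi(C_i,\Sigma)$, and then use that $C_i$ retracts onto $\Sigma\cup\Dd_i$; the pair $(\Sigma\cup\Dd_i,\Sigma)$ has exactly $p$ relative cells, all disks in dimension~$2$, so its twisted cellular complex is concentrated in degree~$2$ where it equals $R^p$. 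For $L_{i-1}^\varphi\cap L_i^\varphi$, Lemma~\ref{lem:CompressionBody2} identifies it with $H_3^\varphi(X_i,C_{i-1}\cup C_i)$, and the retraction of $X_i$ onto $(C_{i-1}\cup C_i)\cup\Bb_i$ extracted from the proof of Lemma~\ref{lem:CWStructure} shows the relative pair has only $|\Bb_i|$ relative cells, all balls in dimension~$3$.

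I expect the main subtlety to be the freeness of $L_{i-1}^\varphi\cap L_i^\varphi$, since a priori an intersection of two free submodules of a free $R$-module may fail to be free when $R$ is a general commutative ring. Attempting to argue directly with generators and relations among the curves of $c_{i-1}$ and $c_i$ would require arranging handleslides to the standard diagram of Figure~\ref{fig:StandardDiagram} and tracking the resulting double curves, which is awkward. The chosen approach sidesteps this entirely: the homological interpretation via Lemma~\ref{lem:CompressionBody2}, combined with the absence of relative cells in $(X_i,C_{i-1}\cup C_i)$ in dimensions other than~$3$, forces $H_3^\varphi$ to coincide with the free relative $3$-chain module $R^{|\Bb_i|}$.
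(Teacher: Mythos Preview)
Your argument is correct and takes a genuinely different route from the paper's. The paper argues directly on $\Sigma$: it chooses the bouquet $Z_1$ so that the curves of $c_i$ themselves appear among the loops (hence $L_i^\varphi$ is spanned by a subset of a free basis of $H_1^\varphi(\Sigma,*)$), and for $L_{i-1}^\varphi\cap L_i^\varphi$ it handleslides $c_{i-1}$ and $c_i$ into the standard position of Figure~\ref{fig:StandardDiagram}, so that the parallel ``double curves'' visibly form a free basis. Your approach instead leverages the isomorphisms of Lemmas~\ref{lem:CompressionBody1} and~\ref{lem:CompressionBody2} to recast $L_i^\varphi$ and $L_{i-1}^\varphi\cap L_i^\varphi$ as top-degree twisted homology groups of relative CW pairs with cells in a single dimension, whence freeness is automatic. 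This is cleaner and avoids any explicit handleslide argument; it also yields the rank of $L_{i-1}^\varphi\cap L_i^\varphi$ as $|\Bb_i|$. The trade-off is that the paper's proof produces an explicit basis of $L_{i-1}^\varphi\cap L_i^\varphi$ by curves on $\Sigma$, which is exactly what is needed for the description of cellular torsion bases in Remark~\ref{rmk:CellularBasis}; your argument establishes freeness abstractly but does not by itself exhibit such curve representatives.
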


\begin{proof}
 Since $\partial \Sigma\neq \emptyset$, $\Sigma$ deformation retracts onto a bouquet of $2g+b-1$ loops with central vertex~$*$. Hence $C_1^\varphi(\Sigma,*)\cong R^{2g+b-1}$ is the only non-trivial twisted chain module of $(\Sigma,*)$ and $H_1^\varphi(\Sigma,*)\cong R^{2g+b-1}$. The retraction can be chosen so that the components of $c_i$ are loops of the bouquet, hence $L_i^\varphi$ is a free submodule of $H_1^\varphi(\Sigma,*)$ with basis given by the classes of these components. Morover, up to handleslide, we can assume the components of $c_{i-1}$ and $c_i$ are in standard position (see Figure~\ref{fig:StandardDiagram}), so that a basis of $L_{i-1}^\varphi\cap L_i^\varphi$ is given by the parallel curves in these collections. 
\end{proof}

\section{The twisted relative homology groups and torsion} \label{sec:RelHomology}

In this section, we compute the twisted relative homology and torsion of $X$. The computation of the homology of $(X,\partial X)$ ends up being formally similar to that of $X$, except that minor complication arises from the fact that we need to puncture $X$. 

\begin{definition}\label{def:RDefiningCollection}
 Let $C$ be a lensed compression body. An \emph{$r$--defining collection of disks} for $C$ is a disjoint union $\Dd^r$ of disks, with boundary in $\partial_+C$ or made of an arc in $\partial_+C$ and an arc in $\partial_-C$, such that $C\setminus \eta(\Dd^r)$ is a $3$--ball. The intersection with $\partial_+C$ of an $r$--defining collection of disks for $C$ is a {\em complete collection of arcs and curves} for $C$.
 
 Likewise if $V$ is a hyper compression body then an \emph{$r$--defining collection of balls} for $V$ is a union of $3$--balls $\Bb^r$ such that $V\setminus \eta(\Bb^r)$ is a $4$--ball.
\end{definition}

Such $r$--defining collections of disks do exist. First take a subcollection of a defining collection of disks for $C$, dropping those that do not carry homology relative to boundary. Then add the products with the interval in $\partial_-C\times I$ of arcs that cut $\partial_-C$ into a disjoint union of disks. A similar argument shows existence of $r$--defining collections of balls for the hyper-compression bodies under consideration here.

Fix $r$--defining collections $\Dd_i^r$ and $\Bb_i^r$ of disks and balls for $C_i$ and $X_i$ respectively. Set $\Dd^r=\cup_{i=1}^n\Dd_i^r$ and $\Bb^r=\cup_{i=1}^n\Bb_i^r$. For all $Z\subset X$, let $Z'=Z\setminus \eta(*)$. 

\begin{lemma} \label{lem:RCWStructure}
 The manifold $X'$ deformation retracts onto $\Sigma'\cup\Dd^r\cup\Bb^r\cup\partial X$, where $\Dd^r$ and $\Bb^r$ are subsets of $\Dd^r$ and $\Bb^r$ respectively. 
\end{lemma}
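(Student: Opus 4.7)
The plan is to adapt the arguments of Lemmas~\ref{lem:CWStructure} and~\ref{lem:Retract}, using the $r$--defining collections in place of the defining collections so as to incorporate both the puncture at $*$ and the full boundary $\partial X$ into the target. The retraction will be built in two stages: first each 4--dimensional piece $X_i$ is collapsed onto the union of its boundary with the relevant $r$--balls, then each 3--dimensional piece $C_i$ of the spine is collapsed onto the union of its boundary with the relevant $r$--disks.

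For the first stage, fix $i$ and set $B_i^4 := X_i \setminus \eta(\Bb_i^r)$. By Definition~\ref{def:RDefiningCollection}, $B_i^4$ is a 4--ball, and after a small generic isotopy we may assume $*\notin \Bb_i^r$, so that $*$ lies on $\partial B_i^4$ via the inclusion $*\in \Sigma\subset \partial X_i$. Any 4--ball with a small boundary half--ball removed deformation retracts onto the complement of that half--ball in its boundary; regluing along $\Bb_i^r$ descends this to a deformation retraction of $X_i \setminus \eta(*)$ onto $(\partial X_i \setminus \eta(*))\cup \Bb_i^r$. The resulting retraction fixes $\partial X_i \setminus \eta(*)$ pointwise, so the retractions for different $i$ agree on the overlaps $C_i \setminus \eta(*)$ and patch to give a deformation retraction $X'\simeq \partial X \cup Y'\cup (\Bb^r \setminus \eta(*))$, with $Y' := Y \setminus \eta(*)$.

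The second stage applies the same reasoning one dimension down: $C_i \setminus \eta(\Dd_i^r)$ is a 3--ball with $*\in \Sigma\subset \partial C_i$ on its boundary, so $C_i \setminus \eta(*)$ retracts onto $(\partial C_i \setminus \eta(*))\cup \Dd_i^r = \partial_- C_i \cup \Sigma'\cup \Dd_i^r$. Since $\cup_i \partial_- C_i \subset \partial X$, taking the union over $i$ and extending by the identity over the $r$--balls gives $X' \simeq \partial X \cup \Sigma' \cup (\Dd^r \setminus \eta(*)) \cup (\Bb^r \setminus \eta(*))$, matching the statement. The main subtlety, and the only real obstacle, is arranging that the Stage~2 retraction of $Y'$ extends by the identity over the $r$--balls: the attaching loci $\partial \Bb_i^r \cap C_j$ must lie in the subset of $C_j'$ pointwise fixed by that retraction. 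This is handled by choosing $\Bb^r$ and $\Dd^r$ in generic position (so that $\partial \Bb^r$ is transverse to $\Dd^r$ and disjoint from a small neighborhood of~$*$) and by supporting the Stage~2 retraction in an arbitrarily small collar of the interior of $Y'$, which is exactly what the reference in the statement to subsets of $\Dd^r$ and $\Bb^r$ records.
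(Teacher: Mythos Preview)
Your proof is correct and follows essentially the same approach as the paper: both arguments retract ``inside--out'' from the puncture, first collapsing each $X_i'$ onto $(\partial X_i)'\cup\Bb_i^r$ using that $X_i\setminus\eta(\Bb_i^r)$ is a ball, then collapsing each $C_i'$ onto $\Sigma'\cup\partial_-C_i\cup\Dd_i^r$ using that $C_i\setminus\eta(\Dd_i^r)$ is a ball. Your added discussion of compatibility between the two stages (ensuring the Stage~2 retraction extends by the identity over the attaching loci of the $r$--balls) addresses a point the paper leaves implicit, but does not change the underlying strategy.
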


\begin{proof}
 The proof is similar to that of Lemma \ref{lem:CWStructure}, but instead of retracting from the boundary, we retract ``inside-out'' from the puncture $*$. Since $X_i\setminus \eta(\Bb_i^r)$ is a ball and meets $\eta(*)$ in a small $4$--ball that has been ``scooped out'' of the boundary, we obtain a retraction of $X_i'$ onto $(\partial X_i)'\cup \Bb_i^r$.  Carrying this retraction out for each $i$ yields a retraction of $X'$ onto $Y'\cup\Bb^r\cup\partial X$. Since each $C_i\setminus\eta(\Dd_i^r)$ is also a ball which intersects $\eta(*)$ along a scooped out $3$--ball, $Y'$ can further be retracted onto $\Sigma'\cup\Dd^r$.
\end{proof}

\begin{corollary} \label{cor:Rretract}
 The quad $(X',Y'\cup\partial X,\Sigma'\cup\partial X,\partial X)$ deformation retracts rel $\partial X$ onto a CW--complex $(Z_3^\partial\cup\partial X,Z_2^\partial\cup\partial X,Z_1^\partial\cup\partial X,\partial X)$, where $Z_1^\partial$ is made of arcs and loops on $\Sigma'$, $Z_2^\partial=Z_1^\partial\cup\Dd^r$, $Z_3^\partial=Z_2^\partial\cup\Bb^r$.
\end{corollary}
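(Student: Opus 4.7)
The strategy follows the model of Lemma~\ref{lem:Retract}, applied to the retraction established in Lemma~\ref{lem:RCWStructure}. The plan is to upgrade that retraction so that it respects the filtration of the quad, and then further retract the surface piece $\Sigma'$ onto a 1-spine while keeping $\partial X$ fixed.

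First I would observe that the two-step retraction in the proof of Lemma~\ref{lem:RCWStructure} is already filtered. The initial ``inside-out'' retraction of each $X_i' \setminus \eta(\Bb_i^r)$ onto $(\partial X_i)'$ fixes $Y' \cup \partial X$ pointwise, so it carries the quad $(X',Y'\cup\partial X,\Sigma'\cup\partial X,\partial X)$ onto $(Y'\cup\Bb^r\cup\partial X,Y'\cup\partial X,\Sigma'\cup\partial X,\partial X)$. The subsequent retraction of each $C_i\setminus\eta(\Dd_i^r)$ onto $\partial_-C_i$ fixes $\Sigma'\cup\partial X$ and sends $Y'$ onto $\Sigma'\cup\Dd^r$. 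Composing these two, we obtain a deformation retraction onto the quad $(\Sigma'\cup\Dd^r\cup\Bb^r\cup\partial X,\Sigma'\cup\Dd^r\cup\partial X,\Sigma'\cup\partial X,\partial X)$, all rel $\partial X$.

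Second, I would collapse $\Sigma'$ onto a graph. Since $\partial X\neq\emptyset$ one has $\partial\Sigma\neq\emptyset$, so $\Sigma'$ is a compact surface whose boundary $\partial\Sigma\sqcup\partial\eta(*)$ is non-empty, hence $\Sigma'$ deformation retracts onto any 1-subcomplex which is a spine. I would put a CW-structure on $\Sigma'$ whose 1-skeleton $Z_1^\partial$ contains both $\partial\Sigma$ and the collection of arcs and curves $\bigcup_i(\Dd_i^r\cap\Sigma')$ along which $\Dd^r$ is attached, and whose 2-cells are open discs; this is possible because these 1-dimensional submanifolds can be put in general position and complete to a cell decomposition. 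Collapsing the 2-cells one by one yields a deformation retraction of $\Sigma'$ onto $Z_1^\partial$ fixing $\partial\Sigma$ and the attaching loci of $\Dd^r$.

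Finally, I would extend this retraction by the identity on $\Dd^r\cup\Bb^r\cup\partial X$. Since the restriction to $\Sigma'$ was chosen to fix $\partial\Sigma=\Sigma'\cap\partial X$ as well as $\Sigma'\cap\Dd^r$, the two definitions agree on overlaps, so they glue to a well-defined deformation retraction of the full space onto $(Z_3^\partial\cup\partial X, Z_2^\partial\cup\partial X, Z_1^\partial\cup\partial X, \partial X)$, rel $\partial X$. The only mildly delicate point is arranging the CW-structure on $\Sigma'$ so that its 1-skeleton simultaneously contains $\partial\Sigma$ and the attaching locus of $\Dd^r$; but since these are disjoint 1-dimensional submanifolds (in general position) of a surface with boundary, this is routine.
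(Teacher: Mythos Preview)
Your proof is correct and follows exactly the route the paper intends: the corollary is stated without proof, as the relative analogue of Lemma~\ref{lem:Retract}, and your argument supplies precisely those details---carry out the filtered retraction of Lemma~\ref{lem:RCWStructure} and then collapse $\Sigma'$ onto a $1$--spine rel $\partial\Sigma$. One small refinement: when you arrange the $1$--skeleton $Z_1^\partial$ to contain the attaching locus of $\Dd^r$ on $\Sigma'$, you should in the same breath include $\partial\Bb^r\cap\Sigma'$ as well, so that extending by the identity on $\Bb^r$ is genuinely well defined; this is the same routine general-position step you already flagged for $\Dd^r$.
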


\begin{corollary} \label{cor:RcomplexX}
 The homology of $(X,\partial X)$ is given by the chain complex 
 \begin{equation} \label{complexforXrel} \tag{$\Cb'$}
  H_4^\varphi(X,X')\rightarrow H_3^\varphi(X',Y'\cup \partial X)\rightarrow H_2^\varphi (Y',\Sigma'\cup \partial Y)\rightarrow H_1^\varphi(\Sigma',\partial\Sigma)\to0.
 \end{equation}
 Moreover, if $R$ is a field, then there are complex bases $c$ of \ref{complexforXrel} such that $\tau^\varphi(X,\partial X;h)=\tau(\Cb';c,h)$ for any homology basis $h$ of $(X,\partial X)$ and \ref{complexforXrel}. 
\end{corollary}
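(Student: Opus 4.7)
The plan is to build the complex $\Cb'$ as the relative cellular chain complex associated with the CW-filtration of $(X,\partial X)$ provided by Corollary~\ref{cor:Rretract}, together with one extra top cell. Concretely, set
\[
F_0=\partial X,\ F_1=Z_1^\partial\cup \partial X,\ F_2=Z_2^\partial\cup \partial X,\ F_3=Z_3^\partial\cup \partial X,\ F_4=X,
\]
so that each $F_i$ is obtained from $F_{i-1}$ by attaching $i$--cells (the cells of dimension $\leq 3$ being arcs/loops, disks, and balls supplied by Corollary~\ref{cor:Rretract}, and the unique $4$--cell being $\eta(*)$, which is how $X$ is recovered from $X'$). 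This is a relative CW-structure on $(X,\partial X)$; the corresponding relative cellular chain complex, with twisted coefficients, computes $H_*^\varphi(X,\partial X)$.

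First I would identify the chain modules. For each $i$, I apply the deformation retraction of $X'$ rel~$\partial X$ given by Lemma~\ref{lem:RCWStructure} and Corollary~\ref{cor:Rretract} to replace $(F_i,F_{i-1})$ by the corresponding ``smooth'' pair: $(F_4,F_3)$ becomes $(X,X')$, $(F_3,F_2)$ becomes $(X',Y'\cup\partial X)$, $(F_2,F_1)$ becomes $(Y'\cup\partial X,\Sigma'\cup\partial X)$, and $(F_1,F_0)$ becomes $(\Sigma'\cup\partial X,\partial X)$. A single excision of the interior of $\partial X\setminus\eta(\Sigma'\cup \partial_- Y')$ from the last two pairs then yields $H_2^\varphi(Y',\Sigma'\cup \partial_-Y')$ and $H_1^\varphi(\Sigma',\partial\Sigma)$, matching (up to the evident reading of the $\partial Y$ notation) the modules appearing in $\Cb'$. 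The boundary maps are the connecting homomorphisms of the triples $(F_i,F_{i-1},F_{i-2})$, and the complex ends in $0$ at degree $1$ because $H_0^\varphi(F_0,F_{-1})$ is not computed—we are computing $H_*^\varphi(X,\partial X)$, for which $F_{-1}=F_0=\partial X$.

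For the torsion statement I would argue exactly as in Corollary~\ref{cor:complexX}: lift each relative cell of the filtration $(F_i)$ to $\abX$ to obtain a complex basis $\widehat c$ of the $\Z[\pi_1(X)]$--cellular chain complex of $(\abX,\widehat{\partial X})$, and set $c=\widehat c\otimes 1$. Because the deformation retraction of Lemma~\ref{lem:RCWStructure} is a simple homotopy equivalence rel $\partial X$, and because the attachment of the unique $4$--cell $\eta(*)$ is an elementary expansion (or rather its inverse), the pair $(X,\partial X)$ has the simple homotopy type of $(Z_3^\partial\cup \partial X,\partial X)$; hence this choice of $c$ yields precisely the Reidemeister torsion of $(X,\partial X)$ for any homology basis $h$.

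The main thing to watch is the excision identification that collapses $H_1^\varphi(\Sigma'\cup\partial X,\partial X)$ to $H_1^\varphi(\Sigma',\partial\Sigma)$ (and the analogous one in degree~$2$): one must check that the lifted cellular bases on either side correspond, so that the torsion computation transfers without ambiguity. Since the excised region is a collar of part of $\partial X$ disjoint from the chosen cells, the excision is realized by a deformation retraction rel~$\partial\Sigma$, which is simple; hence no additional indeterminacy is introduced. With these identifications in place the complex $\Cb'$ is isomorphic, via maps induced by inclusion and deformation retraction, to the relative cellular complex of $(X,\partial X)$, and both statements of the corollary follow.
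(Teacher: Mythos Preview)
Your approach coincides with the paper's: both obtain degrees $1$--$3$ of $\Cb'$ from the relative CW structure of Corollary~\ref{cor:Rretract} on $(X',\partial X)$ and then adjoin the degree-$4$ term. The paper does the latter via the long exact sequence of the triple $(X,X',\partial X)$; you do it by declaring $\eta(*)$ to be an additional $4$--cell in the filtration, which amounts to the same thing. Your excision step is exactly the identification the paper leaves implicit when passing from $H_*^\varphi(Y'\cup\partial X,\Sigma'\cup\partial X)$ and $H_*^\varphi(\Sigma'\cup\partial X,\partial X)$ to the modules appearing in the statement.

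There is, however, one slip in your torsion paragraph. Filling $\eta(*)$ back in is \emph{not} an elementary expansion (or its inverse), and $(X,\partial X)$ cannot be simple-homotopy equivalent to $(Z_3^\partial\cup\partial X,\partial X)$: the former has $H_4(X,\partial X)\cong\Z$, while the latter is $3$--dimensional rel~$\partial X$. The correct statement is that your filtration $(F_i)$ exhibits $(X,\partial X)$ as simple-homotopy equivalent to a relative CW pair built from $\partial X$ by attaching the cells of $Z_3^\partial$ \emph{together with} one $4$--cell; the cellular complex of \emph{that} structure then computes $\tau^\varphi(X,\partial X;h)$ by definition. With this correction your argument goes through and matches the paper's, which simply refers back to the argument of Corollary~\ref{cor:complexX} for the torsion step.
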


\begin{proof}
 Corollary~\ref{cor:Rretract} immediately gives the following complex for $(X',\partial X)$. 
 $$0\to H_3^\varphi(X',Y'\cup \partial X)\rightarrow H_2^\varphi (Y'\cup\partial X,\Sigma'\cup \partial X)\rightarrow H_1^\varphi(\Sigma'\cup \partial X,\partial X)\to 0$$
 The long exact sequence of the triple $(X,X',\partial X)$ provides the left hand side of the complex. Conclude on torsion as in Corollary~\ref{cor:complexX}.
\end{proof}

\begin{definition}
 Let $\J_i^\varphi$ denote the subgroup of $H_1^\varphi(\Sigma',\partial \Sigma)$ generated by any complete collection of arcs and curves for $C_i$ on $\Sigma'$. 
\end{definition}

\begin{lemma}
 The modules $H_1^\varphi(\Sigma',\partial\Sigma)$ and $\J_i^\varphi$ are free $R$--modules of rank $2g+b-1$ and $2g+b-1-p$ respectively. The modules $\J_{i-1}^\varphi\cap\J_i^\varphi$ are also free. Moreover, $\J_i^\varphi$ is the orthogonal complement of $L_i^\varphi$ with respect to the twisted intersection form on $H_1^\varphi(\Sigma,*)\times H_1^\varphi(\Sigma',\partial \Sigma)$.
\end{lemma}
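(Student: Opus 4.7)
My plan is to address the four assertions of the lemma in sequence, paralleling the method of Lemma~\ref{lem:FreeModules}. For the rank and freeness of $H_1^\varphi(\Sigma',\partial\Sigma)$, I would equip $\Sigma$ with a CW--structure in which $\partial\Sigma$ is a subcomplex and $*$ is interior to the unique $2$--cell (specifically: one $0$--cell per boundary component, the $b$ boundary loops, $b-1$ arcs from a chosen boundary vertex to each other boundary vertex, $2g$ additional loops at the chosen vertex, and one $2$--cell). Puncturing at $*$, $\Sigma'$ deformation retracts rel $\partial\Sigma$ onto the $1$--skeleton, whose relative twisted cellular chain complex has no $0$-- or $2$--cells and exactly $2g+b-1$ free $1$--cells, giving $H_1^\varphi(\Sigma',\partial\Sigma)\cong R^{2g+b-1}$.

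For the orthogonality inclusion $\J_i^\varphi\subseteq(L_i^\varphi)^\perp$, I would choose $\Dd_i^r$ to consist of the non-null-homologous subfamily of $\Dd_i$ together with strips produced from arcs in $\partial_-C_i$ pushed across the collar $\partial_-C_i\times I\subset C_i$. These strips lie in the collar and are disjoint from the co-cores $\Dd_i$, so on $\Sigma=\partial_+C_i$ the curves of $c_i$ and the new arcs of the complete collection are disjoint; a small isotopy separates any curve of $c_i$ shared with the complete collection. Since disjoint embedded cycles pair to zero under the twisted intersection form, the inclusion follows.

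For the rank and freeness of $\J_i^\varphi$ and the upgrade to equality, I count: by Euler characteristic, a minimum $r$--defining collection has $|\Dd_i^r|=1-\chi(C_i)=2g+b-1-p$. Placing everything in standard position as in Figure~\ref{fig:StandardDiagram}, the curves of $c_i$ form part of an explicit symplectic basis of $H_1^\varphi(\Sigma,*)$, and the complete collection of arcs and curves for $C_i$ forms exactly the complementary basis spanning $(L_i^\varphi)^\perp\subset H_1^\varphi(\Sigma',\partial\Sigma)$. This simultaneously yields $\J_i^\varphi=(L_i^\varphi)^\perp$ and exhibits it as free of rank $2g+b-1-p$, using Lemma~\ref{lem:FreeModules} for $L_i^\varphi$ and Blanchfield duality (Section~\ref{secprelim}) for non-degeneracy of the pairing.

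Finally, the freeness of $\J_{i-1}^\varphi\cap \J_i^\varphi$ follows by putting $(c_{i-1},c_i)$ and the corresponding $r$--defining collections in simultaneous standard position as in Figure~\ref{fig:StandardDiagram}; the parallel components shared by the two complete collections then form a basis for the intersection, which is thereby a free direct summand. The main obstacle will be this refined standard-position argument: unlike in Lemma~\ref{lem:FreeModules}, where handle-slides of closed curves suffice, the arc components of $r$--defining collections are anchored at fixed points of $\partial\Sigma$ and require more careful simultaneous handling.
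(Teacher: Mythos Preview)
Your approach is essentially the paper's: retract $\Sigma'$ rel $\partial\Sigma$ onto a system of $2g+b-1$ arcs, realize a complete collection for $C_i$ as a subfamily of such a system to get freeness and rank of $\J_i^\varphi$, and use standard position (Figure~\ref{fig:StandardDiagram}) both for $\J_{i-1}^\varphi\cap\J_i^\varphi$ and for the orthogonal-complement statement. The one wrinkle is your appeal to Blanchfield duality, whose hypotheses from Section~\ref{secprelim} (that $\varphi(H_1)$ be a free subgroup of $R^\times$) are not assumed here; but this is unnecessary, since your own explicit dual-basis observation in standard position already shows the pairing matrix between the $c_i$--curves and the complete collection is a permutation matrix, giving $\J_i^\varphi=(L_i^\varphi)^\perp$ directly over any $R$---which is also how the paper's terse ``dimension argument, using the non-degeneracy of the intersection form'' should be read.
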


\begin{proof}
 Let $Z_1^\partial$ be any collection of $2g+b-1$ arcs properly embedded in $\Sigma'$, which are pairwise disjoint and cut $\Sigma'$ into a disk. Then $\Sigma'$ retracts onto $Z_1^\partial\cup \partial \Sigma$, showing that $H_1^\varphi(\Sigma',\partial\Sigma)\cong R^{2g+b-1}$. The $1$--complex $Z_1^\partial$ can be chosen so that  $2g-p+b-1$ of the arcs form a complete collection of arcs and curves for~$C_i$, whose twisted homology classes generate $\J_i^\varphi$. A basis of $\J_{i-1}^\varphi\cap\J_i^\varphi$ is provided by a subcollection of these. Since $\J_i^\varphi$ is clearly contained in the orthogonal complement of~$L_i^\varphi$, the equality follows by a dimension argument, using the non-degeneracy of the intersection form.
\end{proof}

\begin{lemma}\label{lem:JCompressionBody1}
 For all $i$, $H_2^\varphi(C_i',(\partial C_i)')\cong \J_i^\varphi$.
\end{lemma}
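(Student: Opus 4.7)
The plan is to mirror the proof of Lemma~\ref{lem:CompressionBody1} in the relative setting, working with the long exact sequence of the triple $(C_i',(\partial C_i)',\partial_-C_i)$.

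First, I would establish three preliminary identifications. Since $C_i$ is a lensed compression body with $p$ $1$-handles attached to (a thickening of) $\partial_-C_i$, it deformation retracts onto $\partial_-C_i$ with the cores of its $1$-handles attached; hence $(C_i,\partial_-C_i)$ has the homotopy type of a relative CW-pair with exactly $p$ $1$-cells, so $H_1^\varphi(C_i,\partial_-C_i)\cong R^p$ and $H_k^\varphi(C_i,\partial_-C_i)=0$ for $k\neq 1$. The removal of $\eta(*)$ does not touch $\partial_-C_i$, so $(C_i',\partial_-C_i)\simeq(C_i,\partial_-C_i)$. Finally, excision applied to $(\partial C_i)'=\Sigma'\cup_{\partial\Sigma}\partial_-C_i$ gives $H_*^\varphi((\partial C_i)',\partial_-C_i)\cong H_*^\varphi(\Sigma',\partial\Sigma)$.

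Plugging these into the long exact sequence of the triple yields
\[
 0 \to H_2^\varphi(C_i',(\partial C_i)') \to H_1^\varphi(\Sigma',\partial\Sigma) \to R^p,
\]
so $H_2^\varphi(C_i',(\partial C_i)')$ identifies with the kernel of the right-hand map. The inclusion $\J_i^\varphi\subset\ker$ is immediate, since the arcs and curves of a complete collection for $C_i$ bound the disks of $\Dd_i^r$ with boundaries lying in $\Sigma'\cup\partial_-C_i$. For the reverse inclusion I would use the basis of $H_1^\varphi(\Sigma',\partial\Sigma)$ supplied by the preceding lemma: a subset of $2g+b-1-p$ of its basis elements spans $\J_i^\varphi$, leaving a free complement $M_i$ of rank $p$. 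Since $\J_i^\varphi$ maps to zero, the surjectivity of $H_1^\varphi(\Sigma',\partial\Sigma)\to R^p$ forces $M_i\to R^p$ to be a surjection between free $R$-modules of the same rank, hence an isomorphism (for instance by Vasconcelos's theorem on finitely generated modules over a commutative ring). Consequently $\ker=\J_i^\varphi$.

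The main obstacle I expect is the surjectivity of $H_1^\varphi(\Sigma',\partial\Sigma)\to H_1^\varphi(C_i,\partial_-C_i)$. I would deduce it from the long exact sequence of the triple $(C_i,\partial C_i,\partial_-C_i)$ together with the vanishing $H_1^\varphi(C_i,\partial C_i)=0$: by Poincar\'e--Lefschetz duality on the compact orientable $3$-manifold $C_i$ this group is isomorphic to $H^2_\varphi(C_i)$, which vanishes because $C_i$ deformation retracts onto a $1$-complex, obtained as a graph spine of the surface-with-boundary $\partial_-C_i$ together with the cores of the $1$-handles of $C_i$.
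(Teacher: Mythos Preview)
Your argument is correct and begins exactly as the paper does: both use the long exact sequence of the triple $(C_i',(\partial C_i)',\partial_-C_i)$ together with the excision $((\partial C_i)',\partial_-C_i)\sim(\Sigma',\partial\Sigma)$ to reduce the statement to identifying the kernel of $\zeta:H_1^\varphi(\Sigma',\partial\Sigma)\to H_1^\varphi(C_i',\partial_-C_i)$.

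Where you diverge is in computing this kernel. The paper observes that, since $C_i'$ is a thickened $\partial_-C_i$ with $1$--handles attached, the map $\zeta$ is given explicitly by algebraic intersection with the co-cores of these handles, whose boundaries are the curves $c_i$ generating $L_i^\varphi$; hence $\ker(\zeta)=(L_i^\varphi)^\perp=\J_i^\varphi$ directly from the definition of $\J_i^\varphi$ as an orthogonal complement. You instead use the alternative description of $\J_i^\varphi$ via a complete collection of arcs and curves: one inclusion comes from bounding disks, and the reverse inclusion from a rank count that requires you to first establish surjectivity of $\zeta$ (via Poincar\'e--Lefschetz duality and the fact that $C_i$ retracts to a $1$--complex) and then invoke that a surjection $R^p\to R^p$ is an isomorphism. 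Your route is valid but longer; the paper's is more economical precisely because the intersection-form definition of $\J_i^\varphi$ is tailored to this computation.
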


\begin{proof}
 The long exact sequence of the triple $(C_i',(\partial C_i)',\partial_-C_i)$ and the excision equivalence $((\partial C_i)',\partial_-C_i)\sim(\Sigma',\partial \Sigma)$ give the short exact sequence:
 $$0\to H_2(C_i',(\partial C_i)') \to H_1(\Sigma',\partial\Sigma) \xrightarrow{\zeta} H_1(C_i',\partial_- C_i).$$
 Now $C_i'$ is obtained from a thickened $\partial_- C_i$ by adding only 1--handles, so that the kernel of~$\zeta$ contains the homology classes of curves in $\Sigma'$ that have trivial algebraic intersection with the co-cores of these 1--handles, co-cores whose boundaries generate $L_i^\varphi$. We conclude that $H_2(C_i',(\partial C_i)')=\ker(\zeta)\cong\J_i^\varphi$.
\end{proof}

\begin{lemma}\label{lem:JCompressionBody2}
 For all $i$, $H_3^\varphi(X_i',(\partial X_i)')\cong \J_{i-1}^\varphi\cap \J_i^\varphi$. 
\end{lemma}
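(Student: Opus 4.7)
The plan is to mirror the proof of Lemma~\ref{lem:CompressionBody2}, replacing the pair $(X_i, C_{i-1} \cup C_i)$ with its relative (punctured) analog $(X_i', (\partial X_i)')$ and using Lemma~\ref{lem:JCompressionBody1} in place of Lemma~\ref{lem:CompressionBody1}. The first reduction is purely formal, the second uses excision, and the hard part will be identifying the connecting map at the final stage with the natural inclusions of the $\J_j^\varphi$.

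First, I will apply the long exact sequence of the triple $(X_i', (\partial X_i)', \partial_- X_i)$. Since $X_i' \simeq X_i$ deformation retracts rel~$\partial_- X_i$ onto $\partial_- X_i$ with the arcs forming the cores of the four-dimensional $1$-handles, the relative cellular complex of $(X_i', \partial_- X_i)$ is concentrated in degree~$1$, so $H_k^\varphi(X_i', \partial_- X_i) = 0$ for $k \geq 2$. The triple sequence then gives $H_3^\varphi(X_i', (\partial X_i)') \cong H_2^\varphi((\partial X_i)', \partial_- X_i)$. Excising the interior of $\partial_- X_i$ away from a collar of its boundary $\partial_- C_{i-1} \cup \partial_- C_i$ identifies this with $H_2^\varphi(W, \partial_- W)$, where $W = (C_{i-1})' \cup_{\Sigma'} (C_i)'$ and $\partial_- W = \partial_- C_{i-1} \cup \partial_- C_i$.

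Next I will compute $H_2^\varphi(W, \partial_- W)$ via the triple $(W, \partial_- W \cup \Sigma', \partial_- W)$. Two further excisions apply: collapsing the interior of $\partial_- W$ in the pair $(\partial_- W \cup \Sigma', \partial_- W)$ yields $H_*^\varphi(\partial_- W \cup \Sigma', \partial_- W) \cong H_*^\varphi(\Sigma', \partial \Sigma)$; and since $\Sigma'$ separates $W$ into its two halves, the quotient splits to give
\[ H_*^\varphi(W, \partial_- W \cup \Sigma') \cong H_*^\varphi\big((C_{i-1})', (\partial C_{i-1})'\big) \oplus H_*^\varphi\big((C_i)', (\partial C_i)'\big). \]
By Lemma~\ref{lem:JCompressionBody1} the degree-$2$ summand is $\J_{i-1}^\varphi \oplus \J_i^\varphi$, and the connecting map restricted to each summand should coincide with the inclusion $\J_j^\varphi \hookrightarrow H_1^\varphi(\Sigma', \partial \Sigma)$ by naturality of that lemma's identification. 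The relevant portion of the triple long exact sequence thus reads
\[ H_2^\varphi(\Sigma', \partial \Sigma) \xrightarrow{\alpha} H_2^\varphi(W, \partial_- W) \xrightarrow{\beta} \J_{i-1}^\varphi \oplus \J_i^\varphi \xrightarrow{\delta} H_1^\varphi(\Sigma', \partial \Sigma), \]
where $\delta$ combines the two inclusions (with signs determined by the orientation of $\Sigma'$ as the common boundary of the two halves), so that $\ker\delta \cong \J_{i-1}^\varphi \cap \J_i^\varphi$.

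Finally, I will show $\alpha = 0$: the inclusion $(\Sigma', \partial \Sigma) \hookrightarrow (W, \partial_- W)$ factors through $((C_{i-1})', \partial_- C_{i-1})$, and $(C_{i-1})' \simeq C_{i-1}$ deformation retracts rel~$\partial_- C_{i-1}$ onto $\partial_- C_{i-1}$ with arcs attached (the cores of the three-dimensional $1$-handles of the compression body), so $H_2^\varphi((C_{i-1})', \partial_- C_{i-1}) = 0$. Hence $\beta$ is injective onto $\ker\delta$, giving the desired isomorphism $H_3^\varphi(X_i', (\partial X_i)') \cong \J_{i-1}^\varphi \cap \J_i^\varphi$. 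The main technical obstacle is the bookkeeping through the nested excisions: verifying that the connecting map in the triple $(W, \partial_- W \cup \Sigma', \partial_- W)$ really restricts on each summand to the inclusion $\J_j^\varphi \hookrightarrow H_1^\varphi(\Sigma', \partial \Sigma)$ from Lemma~\ref{lem:JCompressionBody1}, which follows by naturality of the connecting homomorphism applied to the inclusion of triples $((C_j)', (\partial C_j)', \partial_- C_j) \hookrightarrow (W, \partial_- W \cup \Sigma', \partial_- W)$.
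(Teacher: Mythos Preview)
Your proof is correct and follows essentially the same route as the paper's: the same triple $(X_i',(\partial X_i)',\partial_-X_i)$ for the first reduction, the same excision to $(C_{i-1}'\cup C_i',\partial_-C_{i-1}\cup\partial_-C_i)$, and the same triple $(W,\partial_-W\cup\Sigma',\partial_-W)$ (which is exactly $(C_{i-1}'\cup C_i',(\partial C_{i-1})'\cup(\partial C_i)',\partial_-C_{i-1}\cup\partial_-C_i)$) for the second. The only difference is that you explicitly verify $\alpha=0$ by factoring through $H_2^\varphi((C_{i-1})',\partial_-C_{i-1})=0$, whereas the paper leaves this step implicit.
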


\begin{proof}
 Since $X_i'$ is obtained from a thickened $\partial_-X_i$ by adding $1$--handles, the exact sequence of the triple $(X_i',(\partial X_i)',\partial_-X_i)$ gives an isomorphism $H_3^\varphi(X_i',(\partial X_i)')\cong H_2^\varphi((\partial X_i)',\partial_-X_i)$, and this last module is isomorphic to $H_2(C_{i-1}'\cup C_i',\partial_-C_{i-1}\cup\partial_-C_i)$. Now the long exact sequence of the triple $(C_{i-1}'\cup C_i',(\partial C_{i-1})'\cup (\partial C_i)',\partial_-C_{i-1}\cup\partial_-C_i)$ shows that $$H_2^\varphi(C_{i-1}'\cup C_i',\partial_-C_{i-1}\cup\partial_-C_i)\cong\ker\{H_2^\varphi(C_{i-1}',(\partial C_{i-1})')\oplus H_2^\varphi(C_i',(\partial C_i)')\rightarrow H_1^\varphi(\Sigma',\partial\Sigma)\}$$ 
 and the conclusion follows from Lemma \ref{lem:JCompressionBody1}.
\end{proof}

\begin{theorem}\label{thm:RelXHomology}
 The homology of $(X,\partial X)$ is given by the chain complex 
 \begin{equation} \label{complexXrel} \tag{$\Cb$}
  H_2^\varphi(\Sigma,\Sigma')\xrightarrow{\partial_3} \bigoplus_i\J_{i-1}^\varphi\cap \J_{i}^\varphi\xrightarrow{\partial_2} \bigoplus_i\J_i^\varphi\xrightarrow{\partial_1} H_1^\varphi(\Sigma',\partial\Sigma)\rightarrow 0
 \end{equation}
 where $\partial_3([\Sigma])=[\partial(\Sigma\setminus\Sigma')]$, $\partial_2((x_i)_{1\leq i\leq n})=((x_i-x_{i+1})_{1\leq i\leq n})$ and $\partial_1((x_i)_{1\leq i\leq n})=\sum_{i=1}^nx_i$.
 Moreover, if $R$ is a field, there is a complex basis $c$ of \ref{complexXrel} such that $\tau^\varphi(X,\partial X;h)=\tau(\Cb;c,h)$ for any homology basis~$h$ of $(X,\partial X)$ and \ref{complexXrel}.
\end{theorem}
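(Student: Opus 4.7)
The plan is to mirror the proof of Theorem~\ref{thm:XHomology}: start from the chain complex $\Cb'$ of Corollary~\ref{cor:RcomplexX} and identify each of its terms with the corresponding module of $\Cb$, using the preceding lemmas and excision.

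First, I would decompose the two middle terms along the pieces of the multisection. Since the $X_i'$ meet pairwise inside $Y'\cup\partial X$ and the $C_i'$ meet pairwise inside $\Sigma'\cup\partial Y$, excision gives
\[
H_3^\varphi(X',Y'\cup\partial X)\cong\bigoplus_{i=1}^n H_3^\varphi(X_i',(\partial X_i)'),\qquad H_2^\varphi(Y',\Sigma'\cup\partial Y)\cong\bigoplus_{i=1}^n H_2^\varphi(C_i',(\partial C_i)').
\]
Lemmas~\ref{lem:JCompressionBody2} and~\ref{lem:JCompressionBody1} then identify these with $\bigoplus_i(\J_{i-1}^\varphi\cap\J_i^\varphi)$ and $\bigoplus_i\J_i^\varphi$. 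For the leftmost term, excision gives $H_4^\varphi(X,X')\cong H_4^\varphi(\eta(*),\partial\eta(*))\cong R$, generated by the fundamental class of the $4$--ball $\eta(*)$; this is canonically identified with $H_2^\varphi(\Sigma,\Sigma')\cong R$ via the inclusion of $\Sigma\setminus\Sigma'$ as an equatorial $2$--disk of $\eta(*)$, using compatible orientations of $X$ and $\Sigma$ at $*$.

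Next, I would check that the boundary maps of $\Cb'$ become the claimed maps of $\Cb$ under these identifications. The map $\partial_1$ is induced by the inclusions $C_i'\hookrightarrow\Sigma'$, which under Lemma~\ref{lem:JCompressionBody1} becomes the inclusion $\J_i^\varphi\hookrightarrow H_1^\varphi(\Sigma',\partial\Sigma)$, summing to $(x_i)\mapsto\sum x_i$. The map $\partial_2$ arises from the long exact sequence of the triple $(X_i',(\partial X_i)',\partial_-X_i)$: a class $x\in\J_{i-1}^\varphi\cap\J_i^\varphi$, realized by a $3$--ball in $X_i'$ whose boundary splits along $\Sigma'$ into two meridian disks in $C_{i-1}'$ and $C_i'$, yields the contribution $(x,-x)\in\J_{i-1}^\varphi\oplus\J_i^\varphi$; summing contributions at each $C_i$ produces $(x_i-x_{i+1})_i$. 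For $\partial_3$, I would trace the generator through the connecting homomorphism: the boundary sphere $\partial\eta(*)\cong S^3$ intersects each $X_i'$ in a $3$--ball whose own boundary consists of two meridian disks in $C_{i-1}'$ and $C_i'$ meeting along the small circle $\partial(\Sigma\setminus\Sigma')$, so the induced class in every $\J_{i-1}^\varphi\cap\J_i^\varphi$ is exactly $[\partial(\Sigma\setminus\Sigma')]$.

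The torsion statement follows as in Remark~\ref{rmk:CellularBasis}: the identifications above assemble into an isomorphism of chain complexes $\Cb'\to\Cb$, and any complex basis of $\Cb$ that is the image of a cellular basis of $\Cb'$ (themselves coming from a cellular basis of the CW--complex of Corollary~\ref{cor:Rretract}) is a valid torsion basis. The main obstacle I expect is the geometric identification of $\partial_3$: unlike the absolute case, puncturing $X$ introduces a genuinely new term $H_4^\varphi(X,X')$ in the complex, and verifying that its image is a constant tuple $[\partial(\Sigma\setminus\Sigma')]$ requires a careful analysis of the local structure of $\eta(*)$ with respect to the multisection spine near~$*$.
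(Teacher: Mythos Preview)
Your proposal is correct and follows essentially the same route as the paper: start from the complex $\Cb'$ of Corollary~\ref{cor:RcomplexX}, split the middle terms via excision, identify them using Lemmas~\ref{lem:JCompressionBody1} and~\ref{lem:JCompressionBody2}, and trace the generator of $H_4^\varphi(X,X')$ to the curve $\partial\eta(*)\subset\Sigma$ to obtain $\partial_3$. One small wording slip: $\partial_1$ is not ``induced by the inclusions $C_i'\hookrightarrow\Sigma'$'' (there is no such inclusion); it is the connecting map of the triple $(C_i',(\partial C_i)',\partial_-C_i)$, which under Lemma~\ref{lem:JCompressionBody1} becomes the inclusion $\J_i^\varphi\hookrightarrow H_1^\varphi(\Sigma',\partial\Sigma)$ as you then correctly state.
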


\begin{proof}
 Start with the complex (\ref{complexforXrel}) of Corollary~\ref{cor:RcomplexX}. The order 2 and 3 terms are given by Lemmas~\ref{lem:JCompressionBody1} and~\ref{lem:JCompressionBody2}. A generator of $H_4^\varphi(X,X')$ is sent onto the class of $\partial\eta(*)$ in $H_3^\varphi(X',Y'\cup\partial X)$. Following the isomorphisms in Lemma~\ref{lem:JCompressionBody2}, we see that this class corresponds to the class in $\bigoplus_i\J_{i-1}^\varphi\cap \J_i^\varphi$ of the curve $\partial\eta(*)$, where the neighborhood is now understood in $\Sigma$, which is the boundary of a generator of $H_2(\Sigma,\Sigma')$. 
\end{proof}

\begin{remark}

As with the absolute case, we can obtain a concrete description of what valid torsion bases $c$ look like for $\Cc_\partial$:

\begin{itemize}

\item $c_1=\{ [e_1],[e_2],\ldots , [e_n]\}$, where each $e_i$ is an edge of $Z_1^\partial$ ({\em i.e.} any set of arcs which cut $\Sigma$ into a disk),

\item $c_2=$ any basis corresponding to a tuple of complete collections of arcs and curves for $C_i$,

\item $c_3=$ any basis corresponding to a tuple of ``double arcs and curves" for the pairs $(C_{i-1},C_i)$,

\item $c_4=$ the fundamental class of $H_2(\Sigma,\Sigma')$.

\end{itemize}
\end{remark}

\begin{corollary}\label{cor:ExplicitRelHomology}
 We have the following expressions for the twisted homology of $(X,\partial X)$:
 $$H_1^\varphi(X,\partial X)\cong H_1^\varphi(\Sigma',\partial \Sigma)/\oplus_i\J_i^\varphi, \quad 
 H_3^\varphi(X,\partial X)\cong \cap_i\overline{\J}_i^\varphi,$$
 where $\overline{\J}_i^\varphi$ denotes the image of $\J_i^\varphi$ under the inclusion map $H_1^\varphi(\Sigma',\partial \Sigma)\rightarrow H_1^\varphi(\Sigma,\partial\Sigma)$. 
\end{corollary}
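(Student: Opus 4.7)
The plan is to derive both isomorphisms directly from the chain complex \ref{complexXrel} of Theorem~\ref{thm:RelXHomology}, with a short additional argument needed to recast the $H_3$ computation into the stated form. For $H_1^\varphi(X,\partial X)$, I would simply read off the cokernel of $\partial_1$: since $\partial_1((x_i)_i)=\sum_i x_i$, its image in $H_1^\varphi(\Sigma',\partial\Sigma)$ is the submodule $\J_1^\varphi+\cdots+\J_n^\varphi$, which the paper writes $\oplus_i \J_i^\varphi$ by abuse, and the zero on the right of \ref{complexXrel} yields the first isomorphism at once.

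For $H_3^\varphi(X,\partial X)=\ker(\partial_2)/\Im(\partial_3)$, I would first identify $\ker(\partial_2)$ with $\cap_i \J_i^\varphi$ via the diagonal embedding $x\mapsto(x,\ldots,x)$, and then use $\partial_3([\Sigma])=[\partial\eta(*)]$ to conclude that $\Im(\partial_3)$ corresponds under this identification to the cyclic submodule $R\cdot[\partial\eta(*)]$. Thus $H_3^\varphi(X,\partial X)\cong \cap_i \J_i^\varphi/R\cdot[\partial\eta(*)]$, and the remaining task is to identify this quotient with $\cap_i \overline{\J}_i^\varphi$. To this end, the long exact sequence of the triple $(\Sigma,\Sigma',\partial\Sigma)$, together with the easy facts $H_2^\varphi(\Sigma,\Sigma')\cong R\cdot[\Sigma]$ (with $\delta[\Sigma]=[\partial\eta(*)]$) and $H_1^\varphi(\Sigma,\Sigma')=0$, shows that $\iota_*\colon H_1^\varphi(\Sigma',\partial\Sigma)\to H_1^\varphi(\Sigma,\partial\Sigma)$ is surjective with kernel $R\cdot[\partial\eta(*)]$.

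The nontrivial point, and the step I expect to be the main obstacle, is the geometric claim that $[\partial\eta(*)]$ already lies in each $\J_i^\varphi$. I would establish this by noting that $\eta(*)\cap C_i$ is a $3$--ball bounded by the disk $\eta(*)\cap\Sigma\subset\Sigma$ and a transverse disk $D_i$ properly embedded in the interior of $C_i$, the two glued along $\partial\eta(*)$; when we form $C_i'$ by removing the $4$--dimensional regular neighborhood $\eta(*)$, the disk $D_i$ survives in $C_i'$ and exhibits $\partial\eta(*)$ as null-homologous, so $[\partial\eta(*)]\in\ker\zeta=\J_i^\varphi$ in the notation of Lemma~\ref{lem:JCompressionBody1}. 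With this in hand, a one-line diagram chase finishes the proof: given $y\in\cap_i \overline{\J}_i^\varphi$, any preimage $x\in H_1^\varphi(\Sigma',\partial\Sigma)$ differs from some $x_i\in\J_i^\varphi$ by an element of $R\cdot[\partial\eta(*)]\subset\J_i^\varphi$, so $x\in\cap_i \J_i^\varphi$ surjects to $y$; the kernel of this restriction of $\iota_*$ is $R\cdot[\partial\eta(*)]$ itself, yielding the desired isomorphism.
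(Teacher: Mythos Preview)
Your proof is correct and follows essentially the same approach as the paper: both read $H_1^\varphi(X,\partial X)$ off as the cokernel of $\partial_1$, identify $H_3^\varphi(X,\partial X)$ with $(\cap_i\J_i^\varphi)/\Im(\partial_3)$, and invoke the long exact sequence of the triple $(\Sigma,\Sigma',\partial\Sigma)$ to pass to $\cap_i\overline{\J}_i^\varphi$. Your write-up is more detailed than the paper's; in particular, the step you flag as the ``main obstacle''---that $[\partial\eta(*)]\in\J_i^\varphi$ for each $i$---is already built into Theorem~\ref{thm:RelXHomology}, since $\partial_3$ is declared there to take values in $\oplus_i(\J_{i-1}^\varphi\cap\J_i^\varphi)$, so your separate geometric argument, while correct, is unnecessary.
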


\begin{proof}
 For $H_3$, the long exact sequence of the triple $(\Sigma,\Sigma',\partial \Sigma)$ gives the exact sequence $$H_2^\varphi(\Sigma,\Sigma')\xrightarrow{\zeta} H_1^\varphi(\Sigma',\partial \Sigma)\to H_1^\varphi(\Sigma,\partial\Sigma)\to0$$ and we have $H_3^\varphi(X,\partial X)\cong(\cap_i\J_i^\varphi)/\mathrm{Im}(\zeta)$.
\end{proof}

\section{Intersection forms} \label{sec:IntForm}

We keep in this section the assumption that $\partial X\neq\emptyset$. The intersection forms are formally identical to the closed case treated in \cite{FM}. The upshot is that the intersections between various cycles in $X$ can all be made to coincide with intersections in~$\Sigma$.  Below we assume that $\Sigma'=\Sigma\setminus\eta(*)$, so that there is a natural isomorphism $H_1^\varphi(\Sigma,*)\cong H_1^\varphi(\Sigma',\partial\eta(*))$, and we identify each $L_i^\varphi$  with its image under this map below. 

\begin{theorem} \label{thm:IntersectionForm}
 Suppose $h_1=[(x_i)_{1\leq n}]\in H_2^\varphi(X)$ and $h_2=[(y_i)_{1\leq n}]\in H_2^\varphi(X,\partial X)$, where $(x_i)_{1\leq n}\in \oplus_i L_i^\varphi$, and $(y_i)_{1\leq n}\in \oplus_i\J_i^\varphi$. Then 
 \[\langle h_1,h_2\rangle_X^\varphi=\sum_{1\leq i<j\leq n}\langle x_i,y_j\rangle_\Sigma^\varphi\] 
 where here $\langle\cdot,\cdot\rangle_{X}^\varphi$ and $\langle\cdot,\cdot\rangle_\Sigma^\varphi$ are the equivariant intersection forms on $H_2^\varphi(X)\times H_2^\varphi(X,\partial X)$ and  $H_1^\varphi(\Sigma,*)\times H_1^\varphi(\Sigma',\partial \Sigma)$ respectively.
\end{theorem}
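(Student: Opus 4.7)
The strategy follows the closed case of \cite{FM}, extended to the bounded setting: realize $h_1$ and $h_2$ by explicit geometric cycles, put them in transverse position using the angular collar around $\Sigma$, and localize every transverse intersection to $\Sigma$.

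First, I represent $h_1$ and $h_2$ by twisted geometric cycles. By Lemma~\ref{lem:CompressionBody1}, each $x_i \in L_i^\varphi$ bounds a $2$--chain $D_i \subset C_i$ with $\partial D_i = x_i$; and since $\sum_i x_i = 0$ in $H_1^\varphi(\Sigma,*)$, there is a $2$--chain $S \subset \Sigma$ with $\partial S = \sum_i x_i$, so that $\sigma := \sum_i D_i - S$ is a twisted $2$--cycle representing $h_1$. Using Lemma~\ref{lem:JCompressionBody1} analogously, I build a relative $2$--cycle $\tau := \sum_j E_j - T$ representing $h_2$, where $E_j \subset C_j'$ is a relative $2$--chain whose $\Sigma'$--boundary is $y_j$ and $T \subset \Sigma'$ is a correcting $2$--chain.

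Next, I put $\sigma$ and $\tau$ in transverse position via the local model around $\Sigma$. In a tubular neighborhood $N(\Sigma) \cong \Sigma \times D^2$, the $C_i$ correspond to radial rays at angles $\theta_1 < \cdots < \theta_n$ of the $D^2$ factor and the $X_i$ to the corresponding angular sectors. I push the interior of each $D_i$ in the positive $\theta$--direction into $X_{i+1}$, and dually the interior of each $E_j$ in the negative $\theta$--direction into $X_j$, keeping the boundary curves $x_i$ and $y_j$ on $\Sigma$. A further generic perturbation of $S$ and $T$ off $\Sigma$ in the normal $D^2$--direction removes the mixed intersection terms $\langle S, T\rangle$, $\langle S, E_j\rangle$ and $\langle D_i, T\rangle$, all of which involve $2$--chains in the codimension--$2$ surface $\Sigma$.

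With the perturbed cycles transverse, the intersection $\langle h_1, h_2\rangle_X^\varphi$ reduces to $\sum_{i,j}\langle D_i, E_j\rangle_X^\varphi$. A direct analysis in $\Sigma \times D^2$ identifies each nontrivial contribution with a transverse intersection on $\Sigma$: the ribbon of $D_i$ near $\Sigma$, sitting at angle slightly larger than $\theta_i$, and the ribbon of $E_j$, sitting at angle slightly smaller than $\theta_j$, meet transversely (through a point of $\Sigma$) exactly when the forward push of $D_i$ can ``reach'' the backward push of $E_j$, which by the angular ordering occurs precisely for pairs with $i < j$. Each such intersection contributes $\langle x_i, y_j\rangle_\Sigma^\varphi$, and summing yields the claimed formula. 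The main obstacle is the precise verification that the pushing convention produces exactly the pairs $i<j$ (and no others), with the correct signs in the $\varphi$--equivariant setting; this requires working in the universal cover $\widehat{X}$ and tracking deck translates through the collar perturbation, as in \cite{FM}. One further checks that the value is independent of the choice of chain-level representatives $D_i, E_j, S, T$, which follows from the orthogonality of $L_i^\varphi$ and $\J_i^\varphi$ and from the cycle conditions $\sum_i x_i = 0$, $\sum_j y_j = 0$.
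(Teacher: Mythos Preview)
Your overall strategy---realize $h_1,h_2$ by geometric chains, use the angular collar $\Sigma\times D^2$ to separate them, and read off the intersection on $\Sigma$---is exactly the paper's strategy (inherited from \cite{FM}). The gap is in the pushing step, which as you describe it does not produce the pairs $i<j$.

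You push the interior of $D_i$ slightly into $X_{i+1}$ and the interior of $E_j$ slightly into $X_j$, \emph{keeping $x_i$ and $y_j$ on $\Sigma$}. Then at every crossing $p\in x_i\cap y_j$ on $\Sigma$, both $D_i$ and $E_j$ still pass through $p$, and $p$ lies on the \emph{boundary} of each chain; this is not a transverse interior intersection, and it occurs for \emph{all} pairs $(i,j)$, not only $i<j$. Away from $\Sigma$, your pushed $D_i$ lives in $X_{i+1}$ and your pushed $E_j$ lives in $X_j$; these sectors meet only when $j=i+1$, so a genuinely transverse count under your convention would detect at most the pairs $j=i+1$. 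Either way the assertion ``this occurs precisely for pairs with $i<j$'' is not justified. A second problem is your treatment of $S$ and $T$: perturbing $S$ off $\Sigma$ moves $\partial S=\sum_i x_i$ off $\Sigma$ while $\partial D_i=x_i$ stays on $\Sigma$, so $\sum_i D_i-S$ is no longer a cycle.

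The paper fixes both issues at once. It works with the cellular $2$--skeleta $Z_2=Z_1\cup\Dd$ and $Z_2^\partial=Z_1^\partial\cup\Dd^r$, so there are no correcting $2$--chains $S,T$ on $\Sigma$ at all: a $2$--cycle is literally a combination of the disks $D_i$. Then only the \emph{relative} skeleton is pushed, and it is pushed ``all the way'': each piece $Z_2^\partial\cap C_j$ is isotoped in the collar so that it lies in $\cup_{k\le j}X_k$, i.e.\ the ribbon of $E_j$ sweeps backward through $X_j,X_{j-1},\dots,X_1$ to a common off-center point of the $D^2$ factor (Figure~\ref{fig:IntForm}). After this sweep, the pushed $E_j$ crosses the unmoved $C_i$--ray precisely for $i<j$, and there the intersection with $D_i$ is transverse and equals $\langle x_i,y_j\rangle_\Sigma$. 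Replace your ``small push into adjacent sectors'' by this large sweep of the relative chains (and drop $S,T$ in favor of cellular chains) and your argument goes through.
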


\begin{proof}
 It suffices to show that the analogous claim holds true in the untwisted integral homology groups of $\overline{X}$, which denotes the cover of $X$ associated to $\ker(\varphi)$.  For any $Z\subset X$ let $\overline{Z}$ denote the inverse image of $Z$ under the cover $\overline{X}\rightarrow X$.  Since $\pi_1(\Sigma)$, $\pi_1(C_i)$, and $\pi_1(X_i)$ all surject onto $\pi_1(X)$ via the inclusion map, $\overline{\Sigma}$, $\overline{C_i}$, and $\overline{X_i}$ are connected as well. In the finite case these lifts combine to form a multisection of $\overline{X}$, and in the case of an infinite sheeted cover they form what is essentially a multisection, except the pieces involved have infinite genus.  In particular, just as in the finite case, $\eta(\overline{\Sigma})$ is a trivial disk bundle and the lifted compression bodies $\overline{C_i}$ meet each disk in the bundle along rays which are disjoint except at the center point.

There is a cellular structure on $\overline{X}$ obtained by lifting the cell structures of $X$ and $(X,\partial X)$ described in Lemmas~\ref{lem:CWStructure} and~\ref{lem:RCWStructure} to $\overline{X}$. If $Z_2$ is the $2$--skeleton of $X$ described in Lemma~\ref{lem:CWStructure}, then $\overline{Z_2}$ is a $2$-skeleton for $\overline{X}$ which lies in $\overline{\cup_i C_i}$. Similarly, let $Z_2^\partial$ be the relative $2$--skeleton of Lemma~\ref{lem:RCWStructure}. As observed in \cite{FM}, we may push each $\overline{Z_2^\partial}\cap \overline{C_i}$ slightly into its collar so that it is pushed into $\overline{\cup_{1\leq j\leq i}X_j}$. This being done, the intersections between $2$--chains in $C_2(\overline{Z_2})$ and $C_2(\overline{Z_2^\partial})$ will coincide with intersections between the boundaries of the sub-chains lying just in $\overline{Z_2\cap C_i}$ with the boundaries of the sub-chains lying just in $\overline{Z_2^\partial\cap(\cup_{i<j\leq n} C_j)}$, see the left-hand side of Figure~\ref{fig:IntForm}. 
\end{proof}

\begin{figure}[htb]
\begin{center}
\begin{tikzpicture} [scale=0.5]
\begin{scope}
 \draw (0,0) circle (5);
 \foreach \s in{1,...,5,6} {
 \draw[rotate=60*(\s+1)] (0,0) -- (5,0);
 \draw[rotate=60*(\s+1)] (5.8,0) node {$C_\s$};}
 \draw[rounded corners=5pt,green] 
 (0,1) .. controls +(-2,-2) and +(1,-1) .. (0.4,0.3) -- (2.7,4.2)
 (0,1) .. controls +(-2.5,-1.5) and +(0,-1) .. (0.9,-0.3) -- (5,-0.3)
 (0,1) .. controls +(-2.5,-0.85) and +(-1,-0.5) .. (0.4,-1) -- (2.3,-4.45)
 (0,1) .. controls +(-2.5,-0.4) and +(-1,0.5) .. (-0.9,-1.1) -- (-2.7,-4.2)
 (0,1) .. controls +(-1,0) and +(0,0.5) .. (-2,0.3) -- (-5,0.3)
 (0,1) -- (-0.4,1.1) -- (-2.3,4.45);
\end{scope}
\begin{scope} [xshift=15cm]
 \draw (0,0) circle (5);
 \foreach \s in{1,...,5,6} {
 \draw[rotate=60*(\s+1)] (0,0) -- (5,0);
 \draw[rotate=60*(\s+1)] (5.8,0) node {$C_\s$};}
 \draw[green] 
 (2.3,4.45) -- (0,1) -- (-2.3,4.45)
 (5,0.3) -- (0,1) -- (-5,0.3)
 (2.7,-4.2) -- (0,1) -- (-2.7,-4.2);
\end{scope}
\end{tikzpicture}
\end{center}
\caption{Pushing the relative $2$--skeleton}
\label{fig:IntForm}
\end{figure}
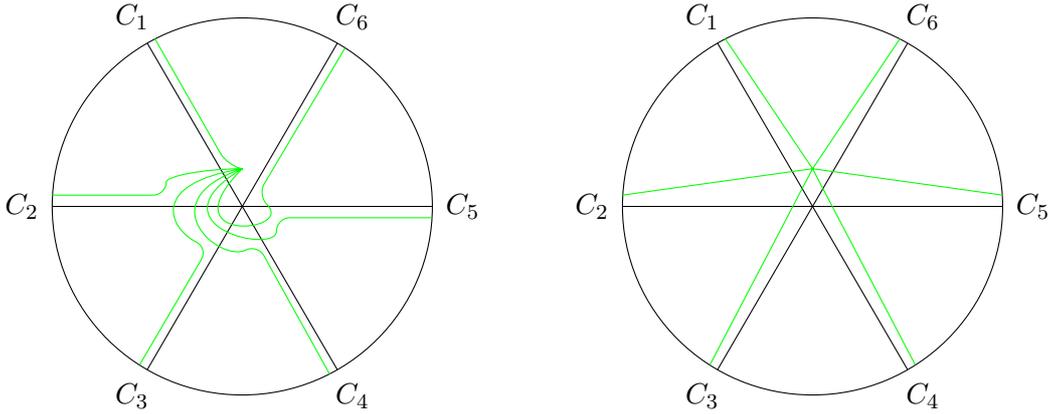

\begin{remark}
 Different expressions can be given for the intersection form by diversely pushing the relative $2$--skeleton. The right-hand side of Figure~\ref{fig:IntForm} suggests another possibility with less terms.
\end{remark}

The intersection forms on the odd-dimensional homology groups are described even more simply.

\begin{theorem} \label{thIntFormOdd}
 Suppose that either $h_1\in H_1^\varphi(X)$ corresponds to the element $a\in H_1^\varphi(\Sigma,*)$ and $h_2\in H_3^\varphi(X,\partial X)$ corresponds to the element $b\in \cap_i\J_i^\varphi$, or $h_1\in H_1^\varphi(X,\partial X)$ corresponds to the element $a\in H_1^\varphi(\Sigma',\partial \Sigma)$ and $h_2\in H_3^\varphi(X)$ corresponds to the element $b\in \cap_i L_i^\varphi$. Then \[\langle h_1,h_2\rangle_X^\varphi=\langle a,b\rangle_\Sigma^\varphi.\]
\end{theorem}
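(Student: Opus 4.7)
The two cases are symmetric—interchanging $L_i^\varphi$ with $\J_i^\varphi$, and $X$ with $(X,\partial X)$—so we focus on the first. As in Theorem~\ref{thm:IntersectionForm}, the plan is to exploit the local ``book'' structure of the multisection near $\Sigma$, which reduces every equivariant intersection in $X$ to a single intersection on $\Sigma$. A tubular neighborhood $N$ of $\Sigma$ in $X$ has the product form $\Sigma\times D^2$, with $D^2$ partitioned into $n$ angular pie-sectors $P_1,\ldots,P_n$ meeting at the origin so that $X_i\cap N=\Sigma\times P_i$ and $C_i\cap N=\Sigma\times(P_i\cap P_{i+1})$.

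\emph{Geometric representatives.} By Corollary~\ref{cor:ExplicitHomology}, $h_1$ is represented by a 1-cycle $\alpha$ on $\Sigma$ with $[\alpha]=a\in H_1^\varphi(\Sigma)\subset H_1^\varphi(\Sigma,*)$. Represent $b$ by a 1-cycle $B$ on $\Sigma'$. For each $i$, Lemma~\ref{lem:JCompressionBody1} applied to $b\in\J_i^\varphi$ furnishes a 2-chain $D_i\subset C_i'$ with $\partial D_i\equiv B\pmod{\partial_-C_i}$, and Lemma~\ref{lem:JCompressionBody2} applied to $b\in\J_{i-1}^\varphi\cap\J_i^\varphi$ furnishes a 3-chain $\beta_i\subset X_i'$ with $\partial\beta_i\equiv D_{i-1}-D_i\pmod{\partial_-X_i}$. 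The 3-chain $\sum_i\beta_i$, whose boundary lies in $\partial X$, represents $h_2$ via the identifications of Theorem~\ref{thm:RelXHomology}. By isotopy within each $X_i$ keeping $\partial\beta_i$ fixed outside $N$, we may arrange both $D_i\cap N=B\times(P_i\cap P_{i+1})$ and $\beta_i\cap N=B\times P_i$. Finally, push $\alpha$ into the interior of $X_1$ by $\alpha':=\alpha\times\{p\}$ for some $p\in\mathrm{int}(P_1)$.

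\emph{Intersection count and lifting.} For $j\neq 1$, $\alpha'\subset\mathrm{int}(X_1)$ is disjoint from both the interior of $\beta_j\subset X_j$ and its boundary $\partial\beta_j\subset\partial X_j$; hence $\alpha'\cap\beta_j=\emptyset$. For $j=1$, inside $N\cap X_1$,
\[\alpha'\cap\beta_1=(\alpha\times\{p\})\cap(B\times P_1)=(\alpha\cap B)\times\{p\},\]
a signed 0-chain counting the intersection of $\alpha$ and $B$ on $\Sigma$. Lifting to the cover $\overline X\to X$ associated with $\ker(\varphi)$, the book structure of $\Sigma$ lifts to an identical book structure of $\overline\Sigma$, and all intersections between the lifts of $\alpha'$ and $\sum_i\beta_i$ occur on a single translate of $\overline\Sigma$. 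Summing with coefficients in $R$ via $\varphi$ gives $\langle h_1,h_2\rangle_X^\varphi=\langle a,b\rangle_\Sigma^\varphi$. The main technical point is arranging the $\beta_i$'s to have the required product form near $\Sigma$, the direct analog of the pushing step in the proof of Theorem~\ref{thm:IntersectionForm}.
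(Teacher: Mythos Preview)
Your argument is correct and follows the same overall strategy as the paper---reduce the equivariant intersection in $X$ to an intersection of curves on $\Sigma$ via the local book structure. The paper's proof, however, is shorter: it represents $h_2$ directly by a linear combination of the cellular $3$--balls (from the collections $\Bb_i^r$ or $\Bb_i$), which already meet $\Sigma$ transversely in the relevant double curves, and leaves $h_1$ as a curve on $\Sigma$. Since a curve on $\Sigma$ meets such a $3$--ball in $X$ precisely where it meets the ball's trace on $\Sigma$, \emph{no isotopy is needed}; the intersection in $X$ is immediately the intersection on $\Sigma$. Your construction of the $\beta_i$ with product form near $\Sigma$ and the push of $\alpha$ into $\operatorname{int}(X_1)$ accomplish the same thing but are unnecessary detours---the point is that here, unlike in Theorem~\ref{thm:IntersectionForm}, the representatives already sit in the right position.
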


\begin{proof}
 The proof is similar in structure to the proof of Theorem \ref{thm:IntersectionForm}, except that now we observe that every chain in $H_1(\overline{X})$ or $H_1(\overline{X},\partial\overline{X})$ is geometrically represented by linear combinations of curves $c\subset \overline{\Sigma}\subset \overline{X}$, and the chains in $H_1(\overline{X},\partial\overline{X})$ or $H_3(\overline{X})$ can be geometrically represented by linear combinations of balls which meet $\overline{\Sigma}$ only in linear combinations of double curves.  Thus no isotopy is needed, the intersections between the 1--chains and the 3--chains already correspond exactly to the intersections of their representatives in $H_1(\overline{\Sigma})$.  
\end{proof}

\section{The case of closed $4$--manifolds} \label{sec:HomologyClosed}

In this section, we compute the twisted homology, torsion and intersection forms when $X$ is closed. It mainly follows the lines of the computation of relative homology, since we need again to puncture $X$. However, it mixes some features of the absolute and relative cases. 
For instance, when $X$ is closed, $r$--defining collections of disks and balls are the same as ordinary defining collections. 
Since there is no additive difficulty with regards to the non-closed case, we skip the details.

We fix $\star\in\Sigma$; for $Z\subset X$, we set $Z'=Z\setminus\eta(\star)$ and we fix $*\in\partial\Sigma'$. Let $\Dd$ and $\Bb$ be unions of defining collections of disks and balls for the $C_i$ and the $X_i$ respectively.
Lemma~\ref{lem:RCWStructure} still holds, and provides the following corollary.
\begin{lemma} \label{lem:complexClosedX}
 The quad $(X',Y',\Sigma',*)$ deformation retracts onto a CW--complex $(Z_3,Z_2,Z_1,Z_0)$, where $Z_0=*$, $Z_1$ is made of loops on $\Sigma'$, $Z_2=Z_1^\partial\cup\Dd$, $Z_3=Z_2\cup\Bb$. 
 Subsequently, the homology of $X$ is given by the chain complex 
 \begin{equation} \label{complexforXclosed} \tag{$\Ccl'$}
  H_4^\varphi(X,X')\rightarrow H_3^\varphi(X',Y')\rightarrow H_2^\varphi (Y',\Sigma')\rightarrow H_1^\varphi(\Sigma',*)\to H_0^\varphi(*).
 \end{equation}
 Moreover, if $R$ is a field, then there are complex bases $c$ of \ref{complexforXclosed} such that $\tau^\varphi(X;h)=\tau(\Ccl';c,h)$ for any homology basis $h$ of $X$ and \ref{complexforXclosed}. 
\end{lemma}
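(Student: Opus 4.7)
The plan is to mirror the proofs of Lemma~\ref{lem:RCWStructure} and Corollary~\ref{cor:RcomplexX}, adapting them to the closed case in which there is no ambient boundary but the manifold has been punctured at $\star$. First, I would observe that the proof of Lemma~\ref{lem:RCWStructure} carries over almost verbatim. Since $\partial_-X_i=\emptyset$, the handle-decomposition convention following Definition~\ref{def:CompressionBody} implies that $X_i\setminus\eta(\Bb_i)$ is a $4$--ball; after puncturing at $\star\in\partial X_i$, this $4$--ball deformation retracts onto its boundary minus the puncture. Collecting these retractions for all $i$ gives a retraction of $X'$ onto $Y'\cup\Bb$. Similarly each $C_i\setminus\eta(\Dd_i)$ is a $3$--ball, so $Y'$ further retracts onto $\Sigma'\cup\Dd$; finally $\Sigma'$ retracts onto a bouquet of loops based at $*$, producing the filtered CW--structure $(Z_3,Z_2,Z_1,Z_0)$ as stated.

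Second, this filtered structure gives the cellular chain complex
$$0\to H_3^\varphi(X',Y')\to H_2^\varphi(Y',\Sigma')\to H_1^\varphi(\Sigma',*)\to H_0^\varphi(*),$$
which computes $H_*^\varphi(X')$ rather than $H_*^\varphi(X)$. To pass from the punctured to the closed manifold I would invoke the long exact sequence of the pair $(X,X')$. By excision, $H_*^\varphi(X,X')\cong H_*^\varphi(\eta(\star),\partial\eta(\star))$ is a free $R$--module of rank $1$ concentrated in degree $4$. Hence $H_k^\varphi(X)\cong H_k^\varphi(X')$ for $k\leq 2$, and the top of the long exact sequence collapses to
$$0\to H_4^\varphi(X)\to H_4^\varphi(X,X')\to H_3^\varphi(X')\to H_3^\varphi(X)\to 0.$$
Splicing the connecting homomorphism $H_4^\varphi(X,X')\to H_3^\varphi(X')$, followed by the inclusion $H_3^\varphi(X')\hookrightarrow H_3^\varphi(X',Y')$ (injective because $Z_3$ has no $4$--cells), onto the top of the cellular complex yields \ref{complexforXclosed}, and a short diagram chase confirms that its homology is $H_*^\varphi(X)$ in every degree.

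For the torsion assertion, I would copy the strategy of Corollary~\ref{cor:complexX}. The retraction of $X'$ onto $Z_3$ is a simple-homotopy equivalence, and attaching back the $4$--cell $\eta(\star)$ extends it to a simple-homotopy equivalence between $X$ and the CW--pair just described. Choosing $c$ to be the image, under the induced chain isomorphism, of any lift to the universal cover of the evident cellular bases of this CW--pair then gives $\tau^\varphi(X;h)=\tau(\Ccl';c,h)$. The principal bookkeeping point, which I expect to be the main subtlety, is checking that the top differential obtained by splicing is compatible with the $\Z[\pi_1(X)]$--module structure on the lifted cellular chains, so that the spliced map is indeed the cellular boundary of the attached $4$--cell and no spurious unit ambiguity beyond $\pm\varphi(\pi_1(X))$ is introduced.
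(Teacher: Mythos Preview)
Your proposal is correct and follows exactly the approach the paper intends: the authors explicitly skip the proof, writing that it ``mainly follows the lines of the computation of relative homology, since we need again to puncture $X$'' and that ``there is no additive difficulty with regards to the non-closed case.'' Your retraction argument is the closed-case analogue of Lemma~\ref{lem:RCWStructure}, and your splicing via the long exact sequence of $(X,X')$ is precisely the analogue of how Corollary~\ref{cor:RcomplexX} invokes the triple $(X,X',\partial X)$; your observation that reattaching the $4$--cell $\eta(\star)$ yields a CW model of $X$ whose cellular complex is \ref{complexforXclosed} is the right way to handle the torsion statement.
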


Now, $L_i^\varphi$ denotes the submodule of $H_1^\varphi(\Sigma',*)$ generated by the homology classes of the curves in $c_i$.

\begin{lemma}
 The modules $H_1^\varphi(\Sigma',*)$ and $L_i^\varphi$ are free $R$--modules of rank $2g$ and $g$ respectively. The modules $L_{i-1}^\varphi\cap L_i^\varphi$ are also free. Moreover, $L_i^\varphi$ is a lagrangian for the twisted intersection form on $H_1^\varphi(\Sigma',*)$.
\end{lemma}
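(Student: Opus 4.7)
The plan is to reuse verbatim the argument of Lemma \ref{lem:FreeModules}, with the surface $\Sigma'$ (genus $g$, one boundary component $\partial \eta(\star)$ containing $*$) playing the role that $\Sigma$ did when $\partial\Sigma\neq \emptyset$, and then add a short argument for the lagrangian property. First, $\Sigma'$ deformation retracts onto a wedge of $2g$ loops based at $*$, so that $C_1^\varphi(\Sigma',*)\cong R^{2g}$ is the only nontrivial relative twisted chain module and $H_1^\varphi(\Sigma',*)\cong R^{2g}$. In the closed case each compression body $C_i$ is a genuine $3$--dimensional handlebody of genus $g$, so $c_i$ consists of exactly $g$ disjoint simple closed curves in $\Sigma'$ whose classes can be completed to a cut system of $\Sigma$; hence the retraction can be arranged so that the components of $c_i$ are $g$ of the bouquet loops, exhibiting $L_i^\varphi$ as a free direct summand of rank $g$. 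Putting $c_{i-1}$ and $c_i$ in standard position via handleslides (Figure~\ref{fig:StandardDiagram}) then gives a free basis of $L_{i-1}^\varphi\cap L_i^\varphi$ by the parallel pairs, exactly as in Lemma~\ref{lem:FreeModules}.

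For the lagrangian statement, isotropy comes for free from disjointness. Let $\overline{\Sigma}$ denote the cover of $\Sigma$ associated to $\ker\varphi$ (restricted to $\pi_1(\Sigma)$). For any two curves $\gamma,\gamma'$ in $c_i$, possibly equal, and any deck transformation $h\neq e$, any lift of $\gamma$ is disjoint from $h\cdot(\text{lift of }\gamma')$ because these two curves project to disjoint curves in $\Sigma$; when $\gamma=\gamma'$ and $h=e$, the self--intersection of a two--sided simple closed curve in an orientable surface is zero, and when $\gamma\neq \gamma'$ they are already disjoint in $\Sigma$. Hence every equivariant intersection number between elements of $c_i$ vanishes, so $L_i^\varphi\subseteq (L_i^\varphi)^\perp$.

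To upgrade isotropy of half rank to the lagrangian property, I would work in a symplectic basis. The canonical isomorphism $H_1^\varphi(\Sigma',*)\cong H_1^\varphi(\Sigma)$ identifies the intersection form with the equivariant intersection form on the closed surface $\Sigma$. Complete the $g$ curves of $c_i$ to a cut system $\{a_1,\dots,a_g,b_1,\dots,b_g\}$ of $\Sigma$ where $a_j=c_i^j$ and each $b_j$ meets $a_j$ in a single point and is disjoint from all other $a_k$ and $b_k$. The corresponding classes form a free $R$--basis of $H_1^\varphi(\Sigma',*)$, and by the same disjointness / transversality computation the equivariant intersection matrix in this basis is the standard symplectic block $\bigl(\begin{smallmatrix} 0 & I_g \\ -I_g & 0\end{smallmatrix}\bigr)$. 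This matrix is invertible over $R$, so the orthogonal complement of $L_i^\varphi=\bigoplus_j Ra_j$ is precisely $\bigoplus_j Ra_j=L_i^\varphi$, proving the lagrangian claim.

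The main obstacle I anticipate is keeping the identifications between $H_1^\varphi(\Sigma',*)$, $H_1^\varphi(\Sigma')$ and $H_1^\varphi(\Sigma)$ and the corresponding intersection pairings consistent, so as to justify that the pairing appearing in the statement really is the standard symplectic one in the symplectic basis. Once that is unwound, the lagrangian property is an immediate computation with the symplectic matrix above; everything else is a direct transcription of Lemma~\ref{lem:FreeModules}.
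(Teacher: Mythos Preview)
The paper does not actually prove this lemma; Section~\ref{sec:HomologyClosed} states it and declares that ``since there is no additive difficulty with regards to the non-closed case, we skip the details.'' Your argument is precisely the intended filling-in: the freeness claims are a word-for-word transcription of Lemma~\ref{lem:FreeModules} with $\Sigma'$ in place of $\Sigma$ (now $b=1$, $p=g$), and your isotropy argument via disjointness of lifts is correct.

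One small point to tighten in the lagrangian step. Your claim that the equivariant intersection matrix in the basis $(a_1,\dots,a_g,b_1,\dots,b_g)$ is exactly the standard symplectic block is slightly too strong as stated: for $\langle a_j,b_j\rangle^\varphi$ you only get $\pm\varphi(h_0)$ for some deck transformation $h_0$ depending on the chosen lifts, not literally $1$. This does not matter for your conclusion, since each such entry is a unit in $R$ and the resulting matrix is block anti-triangular with invertible diagonal blocks; hence for $x=\sum\lambda_ja_j+\sum\mu_jb_j$ the conditions $\langle a_k,x\rangle^\varphi=0$ force $\mu_k=0$ and $(L_i^\varphi)^\perp=L_i^\varphi$ follows. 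Alternatively, you can normalise the connecting arcs to the basepoints so that $h_0=e$, recovering the standard block. Either way your argument goes through, and it is in the spirit of what the paper intends.
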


\begin{lemma}\label{lem:HandleBody}
 For all $i$, $H_2^\varphi(C_i',(\partial C_i)')\cong L_i^\varphi$ and $H_3^\varphi(X_i',(\partial X_i)')\cong L_{i-1}^\varphi\cap L_i^\varphi$. 
\end{lemma}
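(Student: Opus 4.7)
The plan is to replicate the arguments of Lemmas~\ref{lem:CompressionBody1} and~\ref{lem:CompressionBody2} in the punctured setting. Since the puncture $\star$ lies on $\Sigma \subset \partial C_i \cap \partial X_i$, removing a boundary half-ball does not alter the homotopy type, so $C_i'\simeq C_i$ and $X_i'\simeq X_i$. Consequently, $C_i'$ inherits the $1$--dimensional spine of a $3$--dimensional handlebody and $X_i'$ the $1$--dimensional spine of a $4$--dimensional $1$--handlebody, giving $H_k^\varphi(C_i') = H_k^\varphi(X_i') = 0$ for $k \geq 2$. Choosing the disks $\Dd_i$ to be disjoint from $\eta(\star)$, the space $C_i'$ also deformation retracts onto $\Sigma' \cup \Dd_i$, so $H_k^\varphi(C_i',\Sigma') = 0$ for $k\neq 2$. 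Finally, $H_2^\varphi(\Sigma') = 0$.

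For the first isomorphism, the long exact sequence of $(C_i', \Sigma')$ combined with these vanishings yields
\[H_2^\varphi(C_i', \Sigma') \;\cong\; \ker\bigl(H_1^\varphi(\Sigma') \to H_1^\varphi(C_i')\bigr).\]
Since $H_1^\varphi(*) = 0$ and $* \in \partial\Sigma'$, the inclusion $H_1^\varphi(\Sigma') \hookrightarrow H_1^\varphi(\Sigma',*)$ is injective, so $L_i^\varphi$ lifts unambiguously to a submodule of $H_1^\varphi(\Sigma')$. The proof of Lemma~\ref{lem:CompressionBody1} now applies verbatim: the components of $c_i$ bound disks in $C_i'$, giving $L_i^\varphi \subseteq \ker$; and cutting $C_i'$ along $\Dd_i$ yields a $3$--ball (the defining $3$--ball of the closed-case handlebody $C_i$ with a boundary half-ball removed), giving the reverse inclusion.

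For the second isomorphism, the long exact sequence of $(X_i', (\partial X_i)')$ together with the vanishing of $H_2^\varphi(X_i')$ and $H_3^\varphi(X_i')$ yields
\[H_3^\varphi(X_i', (\partial X_i)') \;\cong\; H_2^\varphi((\partial X_i)') \;=\; H_2^\varphi(C_{i-1}' \cup_{\Sigma'} C_i').\]
Applying Mayer--Vietoris to the decomposition $C_{i-1}' \cup_{\Sigma'} C_i'$ and using $H_2^\varphi(\Sigma') = H_2^\varphi(C_j') = 0$, we obtain
\[0 \to H_2^\varphi(C_{i-1}' \cup C_i') \to H_1^\varphi(\Sigma') \to H_1^\varphi(C_{i-1}') \oplus H_1^\varphi(C_i'),\]
and the first isomorphism identifies the kernel of the right-hand map with $L_{i-1}^\varphi \cap L_i^\varphi$.

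The main delicate point, a mild one, is the bookkeeping between $H_1^\varphi(\Sigma')$ and $H_1^\varphi(\Sigma', *)$; this is resolved by placing $*$ on the puncture boundary and invoking $H_1^\varphi(*) = 0$. Beyond that, the closed case is formally the punctured mirror of the non-closed arguments already recorded.
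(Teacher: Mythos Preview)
Your proof is correct and follows essentially the same route as the paper intends. The paper skips the details here, indicating only that the closed case ``mixes some features of the absolute and relative cases''; your argument carries this out by specializing the relative proofs (Lemmas~\ref{lem:JCompressionBody1}--\ref{lem:JCompressionBody2}) to $\partial_-C_i=\partial_-X_i=\emptyset$, which collapses the relevant triples to the pairs $(C_i',\Sigma')$ and $(X_i',(\partial X_i)')$---exactly what you do. Your use of Mayer--Vietoris in place of the long exact sequence of the pair $(C_{i-1}'\cup C_i',\Sigma')$ is a cosmetic variation.
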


\begin{theorem} \label{thm:XClosedHomology}
 If $X$ is closed, the twisted homology of $X$ is given by the chain complex
 \begin{equation} \label{complexXclosed} \tag{$\Ccl$}
  H_2^\varphi(\Sigma,\Sigma')\xrightarrow{\partial_3} \bigoplus_i(L_{i-1}^\varphi\cap L_{i}^\varphi)\xrightarrow{\partial_2} \bigoplus_i L_i^\varphi\xrightarrow{\partial_1} H_1^\varphi(\Sigma',*)\rightarrow H_0^\varphi(*)
 \end{equation}
 where $\partial_3([\Sigma])=[\partial\Sigma']$, $\partial_2((x_i)_{1\leq i\leq n})=((x_i-x_{i+1})_{1\leq i\leq n})$ and $\partial_1((x_i)_{1\leq i\leq n})=\sum_{i=1}^nx_i$.
 Moreover, if $R$ is a field, there is a complex basis $c$ of \ref{complexXclosed} such that $\tau^\varphi(X;h)=\tau(\Ccl;c,h)$ for any homology basis~$h$ of~$X$ and \ref{complexXclosed}. 
\end{theorem}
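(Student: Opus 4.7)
The plan is to follow the structure of the proof of Theorem~\ref{thm:RelXHomology}, starting from the complex $(\Ccl')$ provided by Lemma~\ref{lem:complexClosedX}. The first step is to split the middle terms as direct sums over the index~$i$. Since $Y'=\bigcup_i C_i'$ with $C_i'\cap C_j'\subset\Sigma'$ for $i\neq j$, the inclusions induce an isomorphism
\[H_2^\varphi(Y',\Sigma')\cong\bigoplus_i H_2^\varphi(C_i',\Sigma'),\]
for instance by a Mayer--Vietoris argument or by observing that the relative cellular chain complex splits along the pieces. In the closed case $\partial C_i=\Sigma$, so $(\partial C_i)'=\Sigma'$, and Lemma~\ref{lem:HandleBody} identifies each summand with $L_i^\varphi$. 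Similarly, since $X'=\bigcup_i X_i'$ with $X_i'\cap X_{i+1}'=C_i'\subset Y'$, one obtains
\[H_3^\varphi(X',Y')\cong\bigoplus_i H_3^\varphi(X_i',(\partial X_i)')\cong\bigoplus_i(L_{i-1}^\varphi\cap L_i^\varphi),\]
again by Lemma~\ref{lem:HandleBody}. With these identifications in hand, the differentials $\partial_1$ and $\partial_2$ of $(\Ccl)$ are described exactly as in Theorem~\ref{thm:XHomology}: $\partial_1$ is the sum induced by the inclusions $C_i'\hookrightarrow Y'$, and $\partial_2$ is the difference map reflecting that $C_i'$ appears with opposite orientations as part of $\partial X_i'$ and $\partial X_{i+1}'$.

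The next step is to describe the top of the complex. By excision, $H_4^\varphi(X,X')\cong H_4^\varphi(\eta(\star),\partial\eta(\star))$ is a free rank-one $R$--module, since $\varphi$ is trivial on a small loop around~$\star$; in parallel, $H_2^\varphi(\Sigma,\Sigma')\cong H_2^\varphi(\eta_\Sigma(\star),\partial\eta_\Sigma(\star))$ is also free of rank one. A canonical identification between the two comes from the normal disk bundle structure of $\eta(\star)$ over $\eta_\Sigma(\star)$. The connecting map $H_4^\varphi(X,X')\to H_3^\varphi(X',Y')$ sends the fundamental generator to the class of $\partial\eta(\star)$. Tracing through the identifications of the previous step, this class corresponds in each summand $L_{i-1}^\varphi\cap L_i^\varphi$ to the class of the curve $\partial\Sigma'=\partial\eta_\Sigma(\star)$: indeed $\partial\Sigma'$ bounds a small disk in $\Sigma\subset C_i$ for every~$i$, so it represents an element of $L_i^\varphi$ for all~$i$. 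This yields the claimed formula $\partial_3([\Sigma])=[\partial\Sigma']$.

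For the torsion statement, I would argue exactly as in the proof of Corollary~\ref{cor:complexX} and Theorem~\ref{thm:XHomology}: the isomorphisms above identify a cellular basis of $(\Ccl')$ with a basis of $(\Ccl)$ of the expected geometric form (loops on $\Sigma'$ for $H_1^\varphi(\Sigma',*)$, defining curves $c_i$ for $\bigoplus_i L_i^\varphi$, double curves for $\bigoplus_i(L_{i-1}^\varphi\cap L_i^\varphi)$, and the fundamental class of $\eta_\Sigma(\star)$ for $H_2^\varphi(\Sigma,\Sigma')$), and torsion is invariant under this chain equivalence. The main delicacy will be in the top differential: one must carefully identify the generators of the rank-one modules $H_4^\varphi(X,X')$ and $H_2^\varphi(\Sigma,\Sigma')$ so that the computation of $\partial_3$ is sign-coherent, which ultimately rests on the triviality of $\varphi$ near~$\star$ and the compatibility of excision with twisted coefficients.
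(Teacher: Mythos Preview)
Your proposal is correct and follows essentially the same approach as the paper, which explicitly says that the closed case ``mainly follows the lines of the computation of relative homology'' and skips the details. Your derivation of the direct-sum splittings via Lemma~\ref{lem:HandleBody} and your tracking of the top differential $\partial_3$ through the class of $\partial\eta(\star)$ are exactly the steps the paper has in mind, and your torsion argument via the cellular basis of $(\Ccl')$ matches the pattern of Corollary~\ref{cor:complexX} and Remark~\ref{rmk:CellularBasis}.
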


\begin{remark}

Once again, we can describe valid torsion bases $c$ for $\Ccl$:

\begin{itemize}

\item $c_0=[*]$,

\item $c_1=$ any set of loops based at $*$ which cut $\Sigma$ into a disk,

\item $c_2=$ any basis corresponding to a tuple of defining collections of curves for $C_i$,

\item $c_3=$ any basis corresponding to a tuple of ``double curves" for the pairs $(C_{i-1},C_i)$,

\item $c_4=$ the fundamental class of $H_2(\Sigma,\Sigma')$.

\end{itemize}

\end{remark}

Expressions for the intersection forms on $H_2^\varphi(X)$ and on $H_1^\varphi(X)\times H_3^\varphi(X)$ are again obtained in terms of the intersection form on $H_1^\varphi(\Sigma',*)$. Stricly speaking, this intersection form is defined on $H_1^\varphi(\Sigma',*_1)\times H_1^\varphi(\Sigma',*_2)$ for two distinct basepoints $*_1$ and $*_2$ on $\partial\Sigma'$. 

\begin{theorem} \label{thm:IntFormClosed}
 Suppose $h_1=[(x_i)_{1\leq n}], h_2=[(y_i)_{1\leq n}]\in H_2^\varphi(X)$, where $(x_i)_{1\leq n}, (y_i)_{1\leq n}\in \oplus_i L_i^\varphi$. Then 
 \[\langle h_1,h_2\rangle_X^\varphi=\sum_{1\leq i<j\leq n}\langle x_i,y_j\rangle_\Sigma^\varphi\] 
 where here $\langle\cdot,\cdot\rangle_{X}^\varphi$ and $\langle\cdot,\cdot\rangle_\Sigma^\varphi$ are the equivariant intersection forms on $H_2^\varphi(X)$ and $H_1(\Sigma',*)$.
 
 Suppose that $h_1\in H_1^\varphi(X)$ corresponds to the element $a\in H_1^\varphi(\Sigma',*)$ and $h_2\in H_3^\varphi(X)$ corresponds to the element $b\in \cap_i L_i^\varphi$. Then $\displaystyle\langle h_1,h_2\rangle_X^\varphi=\langle a,b\rangle_\Sigma^\varphi$.
\end{theorem}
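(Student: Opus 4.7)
The plan is to mimic the arguments of Theorems \ref{thm:IntersectionForm} and \ref{thIntFormOdd}, working with the punctured manifold $X'=X\setminus\eta(\star)$ and the CW structure of Lemma \ref{lem:complexClosedX} in place of those of Lemmas \ref{lem:CWStructure} and \ref{lem:RCWStructure}. Because $X$ is closed, both representatives of $H_2^\varphi(X)$ now come from $\oplus_i L_i^\varphi$; the role previously played by $\oplus_i \J_i^\varphi$ in the bordered case is taken here by a second copy of $\oplus_i L_i^\varphi$, and this substitution is legal precisely because $L_i^\varphi$ is its own orthogonal complement in $H_1^\varphi(\Sigma',*)$.

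For the first formula, I would lift everything to the cover $\overline X\to X$ associated to $\ker\varphi$. As in Theorem \ref{thm:IntersectionForm}, the surjectivity of $\pi_1(\Sigma)$, $\pi_1(C_i)$ and $\pi_1(X_i)$ onto $\pi_1(X)$ ensures that $\overline\Sigma$, $\overline{C_i}$ and $\overline{X_i}$ remain connected; the spine $\overline Y$ lies in a trivial disk bundle neighborhood of $\overline\Sigma$, in whose fibres the $\overline{C_i}$ meet along pairwise disjoint rays emanating from the center. Represent $h_1$ by a $2$-cycle $\xi$ of lifted compressing disks so that $\xi\cap \overline{C_i}$ has boundary realizing $x_i\in L_i^\varphi$, and similarly represent $h_2$ by $\eta=\sum_i\eta_i$ with $\partial\eta_i$ realizing $y_i$. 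Now perform the asymmetric push of Theorem \ref{thm:IntersectionForm}: inside a collar of each $\overline{C_i}$, isotope $\eta_i$ slightly into $\overline{\cup_{1\leq j\leq i}X_j}$. After this isotopy $\xi\cap\eta$ is concentrated in $\overline\Sigma$, and a direct accounting of which boundary piece of $\xi$ can meet which pushed $\eta_j$ yields exactly the pairs $i<j$, producing the claimed sum.

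The main technical point will be checking that the asymmetric push is compatible with the puncture at $\star$: the chains involved have part of their boundary behavior controlled by $\partial\eta(\star)\subset\partial\Sigma'$ (reflecting the two basepoints $*_1,*_2$ on $\partial\Sigma'$), and one must verify that after isotopy the geometric intersection in $\overline X$ coincides with the equivariant intersection pairing on $\Sigma'$ defined relative to these basepoints. This is also precisely where the lagrangian property of $L_i^\varphi$ enters: the vanishing of the diagonal contributions $\langle x_i,y_i\rangle_\Sigma^\varphi$ that would otherwise appear in the sum is equivalent to $L_i^\varphi$ being self-orthogonal. Once this compatibility is secured, the rest is a direct transcription of the proof of Theorem \ref{thm:IntersectionForm}.

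For the second formula I would follow Theorem \ref{thIntFormOdd} essentially verbatim: a $1$-cycle in $\overline X$ representing $h_1$ is geometrically isotopic into $\overline\Sigma'$ with class $a\in H_1^\varphi(\Sigma',*)$, while Lemma \ref{lem:HandleBody} together with the complex \ref{complexXclosed} shows that any $3$-cycle representing $h_2\in H_3^\varphi(X)$ can be taken as a union of lifted $3$-balls in the $\overline{X_i}$'s which meet $\overline\Sigma$ only along a combination of double curves realizing $b\in\cap_i L_i^\varphi$. These representatives already meet transversally in $\overline\Sigma$ without any further isotopy, so $\langle h_1,h_2\rangle_X^\varphi=\langle a,b\rangle_\Sigma^\varphi$ reads off directly from the surface intersection.
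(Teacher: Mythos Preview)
Your approach is correct and coincides with the paper's intended argument: the paper omits the proof of Theorem~\ref{thm:IntFormClosed} entirely, having noted at the start of Section~\ref{sec:HomologyClosed} that the closed case follows the non-closed case with no additional difficulty, so transcribing the proofs of Theorems~\ref{thm:IntersectionForm} and~\ref{thIntFormOdd} with the CW structure of Lemma~\ref{lem:complexClosedX} is exactly what is expected. One small clarification: the asymmetric push already removes the diagonal contributions geometrically (the pushed $\eta_i$ leaves $\overline{C_i}$ and hence misses $\xi_i$), so the lagrangian property of $L_i^\varphi$ is not what makes $\langle x_i,y_i\rangle_\Sigma^\varphi$ disappear from the formula---rather, it guarantees that the two formulas obtained by pushing in opposite directions agree.
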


\section{The boundary: monodromy and homology} \label{sec:Boundary}

In this section, we assume $\partial X\neq\emptyset$ and we compute the action of the monodromy of the open book induced on $\partial X$ on the homology of the page $\Sigma_\partial$; we then deduce the homology of $\partial X$. All homology groups are considered with coefficients in $\Z$. We denote $\Sigma_i$ the result of compressing $\Sigma$ along~$c_i$, which is a copy of $\Sigma_\partial$. Given a compact surface $S$ with no closed component, a {\em cut system for $S$} is a family of arcs on $S$ that cuts each component of $S$ into a disk. 

Our main tool is the algorithm of Castro, Gay and Pinz\'on-Caicedo which describes the monodromy of the open book from a trisection diagram \cite{CGPC1}. Although they work with trisections in the case of a connected page, their result extends directly to the setting of multisections with multiple boundary components.

\begin{proposition}[Castro--Gay--Pinz\'on-Caicedo] \label{prop:AlgoCGPC}
Let $e$ be any choice of arcs in $\Sigma$, disjoint from~$c_1$, that forms a cut system for $\Sigma_1$. The monodromy $\phi:\Sigma_1\rightarrow \Sigma_1$ which defines the open book decomposition of $\partial X$ is encoded by its action on $e$, which in turn is described by the following algorithm. 
For $i$ running from $1$ to $n$, slide the curves $c_{i+1}$ over one another and slide the arcs $e$ over the curves $c_i$, until $e$ is disjoint from the curves $c_{i+1}$.
The family $c_1'\cup e'$ which results from these $n$ steps will generally be distinct from the original family $c_1\cup e$.  Perform one final sequence of handleslides of the arcs and curves $c_1'\cup e'$ which sends $c_1'$ to $c_1$. The resulting cut system $e'$ for $\Sigma_1$ is $\phi(e)$.

It is necessary to explicitly index the arcs $e$ and keep track of this index throughout the algorithm, but the simple closed curves $c_i$ need not be indexed.
\end{proposition}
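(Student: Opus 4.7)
The plan is to reduce the statement to the original theorem of Castro--Gay--Pinz\'on-Caicedo \cite{CGPC1} by two independent generalizations: passing from $n=3$ to arbitrary $n$, and from a connected page to pages with several components. Neither generalization interacts with the other, so they can be handled in sequence.

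First I would set up the open book geometry induced on $\partial X$. The boundary $\partial\Sigma$ of the central surface serves as the binding; each lensed compression body $C_i\cap\partial X$ is a thickened page; and each hyper compression body $X_i\cap\partial X$ is a ``wedge'' interpolating between the two adjacent pages. Under these identifications, the surface $\Sigma_i$ obtained by compressing $\Sigma$ along $c_i$ is naturally a page of the open book, sitting at the angular position of $C_i$. The monodromy $\phi$ is then the diffeomorphism of $\Sigma_1$ obtained by transporting the page once around the binding, through the wedges $X_1,\dots,X_n$, and identifying the final page with $\Sigma_1$.

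Next I would analyze one step of the algorithm, which describes how a cut system $e\subset\Sigma_i$ is transported through the wedge $X_i\cap\partial X$ to a cut system on $\Sigma_{i+1}$. The arcs of $e$ are pushed through the cobordism $X_i$; each compressing disk of $C_i$ that the push meets is recorded as a handleslide of $e$ over the corresponding component of $c_i$, and arranging $c_{i+1}$ so that it becomes disjoint from the modified $e$ requires internal handleslides among the components of $c_{i+1}$. These are exactly the two families of moves appearing in the $i$-th step of the statement, so iterating for $i=1,\dots,n$ carries the cut system once around the binding. The generalization from $n=3$ to general $n$ is thus immediate: CGPC1's proof proceeds wedge-by-wedge, and the same induction step applies.

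Finally, after $n$ iterations we obtain a cut system $e'$ lying on a compressing collection $c_1'$ that is isotopic, but generally not equal, to $c_1$. Because any two defining collections of curves for a compression body differ by a sequence of handleslides, a final normalization sending $c_1'$ to $c_1$ produces the honest image $\phi(e)$. The main obstacle is verifying that the algorithm decouples across components of $\Sigma_\partial$ when the page is disconnected. The key point is that every handleslide in the algorithm is supported in a small neighborhood on $\Sigma$ built from arcs of $e$ and curves of $c_i$ and $c_{i+1}$; such a neighborhood meets only those boundary components of $\Sigma_\partial$ that are actually involved, so the algorithm runs component-wise and the argument of \cite{CGPC1} applies to each component separately, with no global obstruction appearing.
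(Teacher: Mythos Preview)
The paper does not give a proof of this proposition: it is stated as a result of Castro--Gay--Pinz\'on-Caicedo \cite{CGPC1}, and the only justification in the paper is the sentence immediately preceding it, namely that ``although they work with trisections in the case of a connected page, their result extends directly to the setting of multisections with multiple boundary components.'' Your proposal is essentially an expansion of that one sentence: you isolate the two generalizations (arbitrary $n$ and disconnected page) and argue that each is immediate from the wedge-by-wedge structure of the original proof. In that sense your approach is exactly what the paper intends, just made explicit.

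Two small comments on the sketch itself. First, in your second paragraph you write that the arcs are ``pushed through the cobordism $X_i$''; the relevant cobordism is the $3$--dimensional piece $\partial_-X_i=X_i\cap\partial X$, not the $4$--dimensional $X_i$ (you say this correctly once and then slip). Second, the ``decoupling across components of $\Sigma_\partial$'' framing in your last paragraph is slightly off: the entire algorithm is carried out on the connected surface $\Sigma$, so there is nothing to decouple. The only thing to check for a disconnected page is that the CGPC argument never invokes connectedness of $\Sigma_\partial$, which it does not.
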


We denote $L_i$ (resp. $\L_i$) the subgroup of $H_1(\Sigma)$ (resp. $H_1(\Sigma,\partial\Sigma)$) generated by the homology classes of the curves in $c_i$, and we let $\J_i$ (resp. $J_i$) denote its orthogonal in $H_1(\Sigma,\partial\Sigma)$ (resp. $H_1(\Sigma)$). 

\begin{lemma}
 The groups $L_i$, $J_i$, $\L_i$ and $\J_i$ are free abelian groups of rank $p$, $g+h+b-1$, $g-h$ and $2g+b-p-1$ respectively, where $g$ is the genus of $\Sigma$, $h$ the genus of $\Sigma_\partial$ and $b$ the number of boundary components of both. Moreover, $L_i$ and $\L_i$ are primitive subbroups of $J_i$ and $\J_i$  respectively, so that the quotients $J_i/L_i$ and $\J_i/\L_i$ both are free abelian groups of rank $g+h+b-p-1$.
\end{lemma}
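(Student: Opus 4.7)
The plan is first to pin down the combinatorics relating $\Sigma$, $\Sigma_\partial$, and $p$. Let $k$ denote the number of connected components of $\Sigma_\partial$. Since compressing $\Sigma$ along the $p$ curves of $c_i$ yields $\Sigma_\partial$, and each compression raises the Euler characteristic by $2$, the identity $\chi(\Sigma_\partial)=\chi(\Sigma)+2p$ reads $2k-2h-b=2-2g-b+2p$, giving $p=g-h+k-1$, and in particular $g-h=p-(k-1)$.

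Next I would compute the ranks. The module $L_i$ is free of rank $p$ by Lemma~\ref{lem:FreeModules} with trivial coefficients. The module $\L_i$ is the image of $L_i$ under the natural map $H_1(\Sigma)\to H_1(\Sigma,\partial\Sigma)$, whose kernel $K=\mathrm{im}(H_1(\partial\Sigma)\to H_1(\Sigma))$ has rank $b-1$. To compute $L_i\cap K$, I would factor the map $H_1(\partial\Sigma)\to H_1(\Sigma)\to H_1(C_i)$ through $H_1(\Sigma_\partial)\hookrightarrow H_1(C_i)$ (the latter is injective because $\Sigma_\partial$ is a retract of the graph $\Sigma_\partial\cup(\mathrm{arcs})$ onto which $C_i$ deformation-retracts). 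Hence a class in $K$ lies in $L_i$ iff its preimage in $H_1(\partial\Sigma)$ lies in $\ker(H_1(\partial\Sigma)\to H_1(\Sigma_\partial))$. This latter kernel has rank $k$ (one boundary-sum relation per component of $\Sigma_\partial$), and its image in $H_1(\Sigma)$ has rank $k-1$, since the total boundary sum $\sum_j[\partial_j\Sigma]$ is already null-homologous in $\Sigma$. Thus $\mathrm{rank}(\L_i)=p-(k-1)=g-h$. The ranks of $J_i=\L_i^\perp$ and $\J_i=L_i^\perp$ then follow from the non-degeneracy of the intersection pairing on $H_1(\Sigma)\times H_1(\Sigma,\partial\Sigma)$ (both of rank $2g+b-1$), yielding $\mathrm{rank}(J_i)=g+h+b-1$ and $\mathrm{rank}(\J_i)=2g+b-p-1$.

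For primitivity, $L_i=\ker(H_1(\Sigma)\to H_1(C_i))$ and $H_1(C_i)$ is free, so $H_1(\Sigma)/L_i$ embeds into a free abelian group; hence $L_i$ is primitive in $H_1(\Sigma)$, and in particular in $J_i$. The delicate point, which I expect to be the main obstacle, is the primitivity of $\L_i$: this subgroup is not naturally the kernel of a map to a free group, and the sum of two primitive subgroups need not itself be primitive, so an abstract argument seems out of reach. My strategy would be to invoke the standard form of Figure~\ref{fig:StandardDiagram}: up to handleslides among the components of $c_i$ (which preserve both $L_i$ and $\L_i$), the curves of $c_i$ can be arranged on $\Sigma$ so that $g-h$ of them are non-separating meridians of handles of $\Sigma$, forming part of a standard basis of $H_1(\Sigma)$, while the remaining $k-1$ are separating curves, each bounding a subsurface carrying a subset of the boundary components of $\Sigma$ and hence representing a class in $K$. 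Thus $L_i+K$ is generated by part of a standard basis of $H_1(\Sigma)$, and is a direct summand of $H_1(\Sigma)$. The short exact sequence $0\to H_1(\Sigma)/K\to H_1(\Sigma,\partial\Sigma)\to\mathbb{Z}^{b-1}\to 0$ coming from the pair $(\Sigma,\partial\Sigma)$ splits, giving $H_1(\Sigma,\partial\Sigma)/\L_i\cong H_1(\Sigma)/(L_i+K)\oplus\mathbb{Z}^{b-1}$, which is free; so $\L_i$ is primitive in $H_1(\Sigma,\partial\Sigma)$ and a fortiori in $\J_i$. The common rank $g+h+b-p-1$ of $J_i/L_i$ and $\J_i/\L_i$ then follows immediately from the computed ranks.
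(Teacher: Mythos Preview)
Your proof is correct, and considerably more detailed than the paper's. The paper's entire argument is the single sentence ``Everything can be read from the surface $\Sigma$ with the family $c_i$ in standard position, see Figure~\ref{fig:CompressionBody},'' i.e.\ one simply inspects the explicit curves drawn on the standard model. You instead derive the ranks homologically: the Euler characteristic identity fixes $p=g-h+k-1$; the rank of $\L_i$ comes from identifying $L_i\cap K$ with the image of $\ker(H_1(\partial\Sigma)\to H_1(\Sigma_\partial))$ via the retraction $C_i\to\Sigma_\partial$; and the ranks of $J_i$, $\J_i$ follow from the perfect pairing $H_1(\Sigma)\times H_1(\Sigma,\partial\Sigma)\to\Z$. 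Your primitivity argument for $L_i$ via $L_i=\ker(H_1(\Sigma)\to H_1(C_i))$ with $H_1(C_i)$ free is cleaner than reading it off a picture. For the primitivity of $\L_i$ you correctly recognise that no purely formal argument is available and invoke the standard form, which is exactly what the paper does (implicitly) for every assertion. So both proofs ultimately rest on the same standard model; the difference is that you isolate which statements genuinely require it and supply independent homological proofs for the rest, whereas the paper handles everything uniformly by inspection.
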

\begin{proof}
 Everything can be read from the surface $\Sigma$ with the family $c_i$ in standard position, see Figure~\ref{fig:CompressionBody}.
\end{proof}

\begin{figure}[htb]
\begin{center}
\begin{tikzpicture} [scale=0.3]
\newcommand{\trou}{
(2,0) ..controls +(0.5,-0.25) and +(-0.5,-0.25) .. (4,0)
(2.3,-0.1) ..controls +(0.6,0.2) and +(-0.6,0.2) .. (3.7,-0.1)}
\draw (0,0) ..controls +(0,1) and +(-2,1) .. (4,2);
\draw (4,2) ..controls +(1,-0.5) and +(-1,0) .. (6,1.25);
\draw[dashed] (6,1.25);
\draw (0,0) ..controls +(0,-1) and +(-2,-1) .. (4,-2);
\draw (4,-2) ..controls +(1,0.5) and +(-1,0) .. (6,-1.25);
\foreach \x/\y in {6/0,18/0,38/3} {
\begin{scope} [xshift=\x cm,yshift=\y cm]
\draw (0,1.25) ..controls +(1,0) and +(-2,1) .. (4,2);
\draw (4,2) ..controls +(2,-1) and +(-2,-1) .. (8,2);
\draw (8,2) ..controls +(2,1) and +(-1.2,0) .. (12,1.25);
\draw (0,-1.25) ..controls +(1,0) and +(-2,-1) .. (4,-2);
\draw (4,-2) ..controls +(2,1) and +(-2,1) .. (8,-2);
\draw (8,-2) ..controls +(2,-1) and +(-1.2,0) .. (12,-1.25);
\end{scope}}
\foreach \x in {0,6,12,18,24} {
\draw[xshift=\x cm] \trou;}
\foreach \x in {-0.3,5.7,11.7,17.7,23.7} {
\draw[color=red,xshift=\x cm] (3,-0.2) ..controls +(0.2,-0.5) and +(0.2,0.5) .. (3,-2.3);
\draw[dashed,color=red,xshift=\x cm] (3,-0.2) ..controls +(-0.2,-0.5) and +(-0.2,0.5) .. (3,-2.3);}
\foreach \x in {0,6,12,18,24} {
\draw[color=aqua,xshift=\x cm] (3,-0.2) ..controls +(0.2,-0.5) and +(0.2,0.5) .. (3,-2.3);
\draw[dashed,color=aqua,xshift=\x cm] (3,-0.2) ..controls +(-0.2,-0.5) and +(-0.2,0.5) .. (3,-2.3);}
\foreach \x in {0.3,6.3,12.3,18.3,24.3} {
\draw[color=bviolet,xshift=\x cm] (3,-0.2) ..controls +(0.2,-0.5) and +(0.2,0.5) .. (3,-2.3);
\draw[dashed,color=bviolet,xshift=\x cm] (3,-0.2) ..controls +(-0.2,-0.5) and +(-0.2,0.5) .. (3,-2.3);}
\foreach \x/\y/\c in {37.9/-3/aqua,38.2/-3/bviolet,37.9/3/aqua,38.2/3/bviolet,43.9/3/aqua,44.2/3/bviolet,37.9/9/aqua,38.2/9/bviolet} {
\draw[color=\c,xshift=\x cm,yshift=\y cm] (3,-0.2) ..controls +(0.2,-0.5) and +(0.2,0.5) .. (3,-2.3);
\draw[dashed,color=\c,xshift=\x cm,yshift=\y cm] (3,-0.2) ..controls +(-0.2,-0.5) and +(-0.2,0.5) .. (3,-2.3);}
\foreach \x/\y in {38/-3,38/3,44/3,38/9} {
\draw[color=aqua,xshift=\x cm,yshift=\y cm] (3,0)ellipse(1.5 and 0.7);
\draw[color=bviolet,xshift=\x cm,yshift=\y cm] (3,0)ellipse(1.8 and 1);}
\foreach \y in {-9,9} {
\draw[color=bviolet,xshift=44cm,yshift=\y cm] (0,0.8) arc (90:270:0.8);}
\foreach \x/\y in {38/3,44/3,38/-3,38/9} {
\draw[xshift=\x cm,yshift=\y cm] \trou;}
\foreach \x/\y in {44/-3,50/3}{
\begin{scope} [xshift=\x cm,yshift=\y cm]
\draw (0,1.25) ..controls +(0.4,-1) and +(0.4,1) .. (0,-1.25);
\draw[dashed] (0,1.25) ..controls +(-0.4,-1) and +(-0.4,1) .. (0,-1.25);
\end{scope}}
\begin{scope} [xshift=26cm,yshift=-3cm]
\draw (14,2) ..controls +(2,1) and +(-1.2,0) .. (18,1.25);
\draw (14,-2) ..controls +(2,-1) and +(-1.2,0) .. (18,-1.25);
\draw (12,1.25) ..controls +(0.5,0) and +(-1,-0.5) .. (14,2);
\draw (12,-1.25) ..controls +(0.5,0) and +(-1,0.5) .. (14,-2);
\end{scope}
\begin{scope} [xshift=26cm,yshift=9cm]
\draw (14,2) ..controls +(2,1) and +(-1,0) .. (18,2);
\draw (14,-2) ..controls +(2,-1) and +(-1,0) .. (18,-2);
\draw (12,1.25) ..controls +(0.5,0) and +(-1,-0.5) .. (14,2);
\draw (12,-1.25) ..controls +(0.5,0) and +(-1,0.5) .. (14,-2);
\end{scope}
\begin{scope} [xshift=26cm,yshift=-9cm]
\draw (12,1.25) ..controls +(2,0) and +(-2,0) .. (18,2);
\draw (12,-1.25) ..controls +(2,0) and +(-2,0) .. (18,-2);
\end{scope}
\foreach \y in {9,-9} {
\begin{scope} [xshift=44cm,yshift=\y cm]
\foreach \s in {1,-1} {
\draw (0,2*\s) ..controls +(0.2,-0.5*\s) and +(0.2,0.5*\s) .. (0,0.5*\s);
\draw[dashed] (0,2*\s) ..controls +(-0.2,-0.5*\s) and +(-0.2,0.5*\s) .. (0,0.5*\s);}
\draw[color=aqua] (-0.3,2) ..controls +(0.2,-0.5) and +(0.2,0.5) .. (-0.3,0.4);
\draw[color=aqua,dashed] (-0.3,2) ..controls +(-0.2,-0.5) and +(-0.2,0.5) .. (-0.3,0.4);
\draw (0,0.5) arc (90:270:0.5);
\draw (0,0.5) arc (90:270:0.5);
\end{scope}}
\foreach \y in {-6,0,6} {
\draw (38,1.75+\y) arc (90:270:1.75);}
\foreach \s in {1,-1} {
\draw (30,1.25*\s) .. controls +(3,0) and +(-5,0) .. (38,10.25*\s);}
\foreach \y in {-9,-3,3} {
\foreach \x/\c in {37.9/red,38.2/aqua} {
\begin{scope} [xshift=\x cm,yshift=\y cm,\c]
\draw (0,1.25) ..controls +(0.4,-1) and +(0.4,1) .. (0,-1.25);
\draw[dashed] (0,1.25) ..controls +(-0.4,-1) and +(-0.4,1) .. (0,-1.25);
\end{scope}}}
\end{tikzpicture}
\end{center} \caption{Curves on $\Sigma$ for the compression body $C_i$\\[2pt]{\footnotesize The curves of $c_i$ are in red.\\ The homology classes of the blue and violet curves form bases of $J_i$ and $\J_i$ respectively.}} \label{fig:CompressionBody}
\end{figure}

\begin{lemma}
 There are natural identifications $\displaystyle H_1(\Sigma_i,\partial\Sigma)\cong\frac{\J_i}{\L_i}$ and $\displaystyle H_1(\Sigma_i)\cong\frac{J_i}{L_i}$.
\end{lemma}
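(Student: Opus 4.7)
The plan is to use the compression body $C_i$ as a geometric intermediary, exploiting the fact that it contains both $\Sigma=\partial_+C_i$ and $\Sigma_i=\partial_-C_i$ as boundary pieces. Recall that $C_i$ is obtained from $\Sigma_\partial\times I$ by attaching $p$ 1-handles to $\Sigma_\partial\times\{1\}$, and the co-cores of these 1-handles form a defining collection $\Dd_i$ of proper disks in $C_i$ whose boundaries realize $c_i$ on $\Sigma$. The key geometric observation is that $C_i\setminus\eta(\Dd_i)\cong\Sigma_\partial\times I$.

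From this observation I would extract two ingredients. First, the LES of the pair $(C_i,\Sigma)$ combined with $H_1(C_i,\Sigma)=0$ (since $C_i$ deformation retracts onto $\Sigma\cup\Dd_i$, as in the proof of Lemma~\ref{lem:CompressionBody1}) gives that $H_1(\Sigma)\twoheadrightarrow H_1(C_i)$ is surjective with kernel exactly $L_i$; an analogous argument using the LES of the triple $(C_i,\Sigma,\partial\Sigma)$ shows that $H_1(\Sigma,\partial\Sigma)\twoheadrightarrow H_1(C_i,\partial\Sigma)$ is surjective with kernel exactly $\L_i$. Second, the inclusions $\Sigma_i\hookrightarrow C_i$ and $(\Sigma_i,\partial\Sigma)\hookrightarrow(C_i,\partial\Sigma)$ are injective on $H_1$, and their images are characterized by trivial algebraic intersection with each co-core disk in $\Dd_i$: any class in $H_1(C_i)$ orthogonal to $\Dd_i$ is representable by a cycle that can be pushed into $C_i\setminus\eta(\Dd_i)\cong\Sigma_\partial\times I$ and hence into $\Sigma_i$, while cycles in $\Sigma_i$ are manifestly disjoint from $\Dd_i$.

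Combining these ingredients, for a 1-cycle $\alpha$ on $\Sigma$ (closed or with boundary on $\partial\Sigma$), after perturbing $\alpha$ slightly off $\Sigma$ into $C_i$ the intersection $\langle\alpha,D_j\rangle_{C_i}$ coincides with $\langle\alpha,c_i^j\rangle_\Sigma$ since $\partial D_j=c_i^j$. Therefore the preimage of $H_1(\Sigma_i)\subset H_1(C_i)$ under $H_1(\Sigma)\twoheadrightarrow H_1(C_i)$ is exactly $J_i$, and the preimage of $H_1(\Sigma_i,\partial\Sigma)\subset H_1(C_i,\partial\Sigma)$ under $H_1(\Sigma,\partial\Sigma)\twoheadrightarrow H_1(C_i,\partial\Sigma)$ is exactly $\J_i$. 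Quotienting by the kernels $L_i$ and $\L_i$ then yields the natural identifications $J_i/L_i\cong H_1(\Sigma_i)$ and $\J_i/\L_i\cong H_1(\Sigma_i,\partial\Sigma)$.

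The main obstacle is the second ingredient in paragraph two: recognizing inside $H_1(C_i)$ the image of $H_1(\Sigma_i)$ as the intersection-orthogonal complement of $\Dd_i$. Once this is in hand, everything else is a routine consequence of the long exact sequences and the matching of intersection numbers on $\Sigma$ with those in $C_i$.
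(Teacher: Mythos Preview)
Your proof is correct, and like the paper's proof it uses the compression body $C_i$ as the geometric intermediary, but the route through $C_i$ is different. The paper works with $H_2(C_i,\partial C_i)$ (respectively $H_2(C_i,\Sigma\sqcup\Sigma_i)$) as the pivot module: the long exact sequence of the triple $(C_i,\partial C_i,\Sigma_i)$ identifies this group with $\J_i$ (respectively $J_i$), and then the long exact sequence of the triple $(C_i,\partial C_i,\Sigma)$ identifies $H_1(\Sigma_i,\partial\Sigma)$ with the quotient of this group by the image of $H_2(C_i,\Sigma)$, which is $\L_i$. You instead pivot on $H_1(C_i)$ (respectively $H_1(C_i,\partial\Sigma)$): you realize $H_1(\Sigma)/L_i\cong H_1(C_i)$ via the LES of the pair, embed $H_1(\Sigma_i)$ in $H_1(C_i)$, and characterize its image via algebraic intersection with the disks $\Dd_i$, which you then match with intersection numbers on $\Sigma$. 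The paper's argument stays purely in the language of long exact sequences of triples and excision, while yours trades one of those sequences for a transparent geometric push-off argument; both are short and each makes a different aspect of the identification explicit.
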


\begin{proof}
 We first wiew $C_i$ as a thickened $\Sigma_i$ with $1$--handles attached on the positive boundary, whose co-cores are the curves in $c_i$. This shows that $H_2(C_i,\Sigma_i)=0$ and $H_1(C_i,\Sigma_i)$ is generated by the classes of the cores of the $1$--handles. Hence the long exact sequence of the triple $(C_i,\partial C_i,\Sigma_i)$ gives:
 $$0\to H_2(C_i,\partial C_i) \to H_1(\Sigma,\partial\Sigma) \to H_1(C_i,\Sigma_i) \to0.$$
 Now the image of an element of $H_1(\Sigma,\partial \Sigma)$ is determined by its algebraic intersection with the curves in $c_i$, thus $H_2(C_i,\partial C_i)\cong\J_i$.  
 
 Likewise, viewing the compression body $C_i$ as a thickened $\Sigma$ with $2$--handles glued along the curves in $c_i$ on the negative boundary shows that $H_1(C_i,\Sigma)=0$ and $H_2(C_i,\Sigma)$ is generated by the classes of the cores of the $2$--handles. Now the long exact sequence of the triple $(C_i,\partial C_i,\Sigma)$ gives:
 $$H_2(C_i,\Sigma)\to H_2(C_i,\partial C_i) \to H_1(\Sigma_i,\partial\Sigma)\to0.$$
 Since the $2$--handles are glued along the curves in $c_i$, $H_2(C_i,\Sigma)$ is sent onto $\L_i$ in \mbox{$H_2(C_i,\partial C_i)\cong \J_i$.}
 
 Replacing $\partial C_i$ by $\Sigma\sqcup\Sigma_i$, the first step gives $0\to H_2(C_i,\Sigma\sqcup\Sigma_i) \to H_1(\Sigma) \to H_1(C_i,\Sigma_i) \to0$ and $H_2(C_i,\Sigma\sqcup\Sigma_i)\cong J_i$, and the second step gives $H_2(C_i,\Sigma)\to H_2(C_i,\Sigma\sqcup\Sigma_i) \to H_1(\Sigma_i)\to0$ and $H_2(C_i,\Sigma)\cong L_i$ in $H_2(C_i,\Sigma\sqcup\Sigma_i)\cong J_i$. 
\end{proof}

Let $e$ be a family of arcs in $\Sigma$, disjoint from $c_1$, that forms a cut system for~$\Sigma_1$; note that it defines a basis of $H_1(\Sigma_1,\partial\Sigma)$. Let $a_i$ be a family of simple closed curves on~$\Sigma$ that defines a basis of $\frac{\L_i}{\L_i\cap \L_{i+1}}$ or $\frac{L_i}{L_i\cap L_{i-1}}$ (in the sequel, we may consider their homology classes in $H_1(\Sigma)$ or $H_1(\Sigma,\partial\Sigma)$).
For $\mu=(\mu_i)_{1\leq i\leq s}$ and $\mu'=(\mu'_i)_{1\leq i\leq t}$ families of $H_1(\Sigma,\partial\Sigma)$ and $H_1(\Sigma)$, define the matrix $\mu\cdot\mu'=\left(\langle\mu_i,\mu'_j\rangle_\Sigma\right)_{1\leq i\leq s, 1\leq j\leq t}$. 
\begin{proposition} \label{prop:monodromy}
 Let $\phi:\Sigma_1 \rightarrow \Sigma_1$ be the monodromy which defines the open book on $\partial X$. Define matrices $R_i$ and families $e_i$ in $H_1(\Sigma,\partial\Sigma)$ recursively as follows:
 \begin{itemize}
  \item $R_0=0$ and $e_1=e$,
  \item $R_i=-(e_i\cdot a_{i+1})(a_i\cdot a_{i+1})^{-1}$ and $e_{i+1}=e_i+R_ia_i$.
 \end{itemize}
 Fix a basis of the free $\Z$--module $\J_1$ which admits $e$ as a subfamily and write the families $e$ and $e_{n+1}$ in this basis. Then the action of the monodromy of the open book of $\partial X$ on $H_1(\Sigma_1,\partial\Sigma)\cong\frac{\J_1}{\L_1}$ is given in the basis $e$ by the matrix of  $R=e^te_{n+1},$ where $e^t$ is the transpose of $e$. 
\end{proposition}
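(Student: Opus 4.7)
The plan is to translate the algorithm of Proposition~\ref{prop:AlgoCGPC} into homology. At step~$i$, sliding the curves of $c_{i+1}$ over one another preserves the subgroup $L_{i+1}\subset H_1(\Sigma)$, while sliding an arc of $e$ over a curve $\gamma\in c_i$ modifies the arc's class in $H_1(\Sigma,\partial\Sigma)$ by $\pm[\gamma]\in\L_i$. Writing $e_i$ for the tuple of arc classes at the start of step~$i$ (so $e_1=e$), we have $e_{i+1}=e_i+\delta_i$ with $\delta_i\in\L_i$. Since step~$i$ ends with the arcs geometrically disjoint from $c_{i+1}$, one obtains inductively
\[
 e_{i+1}\cdot L_{i+1}=0,\qquad\text{i.e.,}\qquad e_{i+1}\in\J_{i+1},
\]
with base case $e_1\in\J_1$ because $e$ is disjoint from $c_1$ by assumption.

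To derive the recursion, write $\delta_i=R_ia_i$ in the basis $a_i$. The condition $e_{i+1}\cdot L_{i+1}=0$ becomes $\delta_i\cdot L_{i+1}=-e_i\cdot L_{i+1}$. Since $\L_i$ (which contains $\delta_i$) pairs trivially with $L_i$ (the curves of $c_i$ are disjoint from one another), and $e_i\cdot L_i=0$ by the inductive hypothesis, both sides depend only on the image of $L_{i+1}$ in $L_{i+1}/(L_{i+1}\cap L_i)$, of which $a_{i+1}$ is a basis. Hence the equation reduces to $R_i(a_i\cdot a_{i+1})=-e_i\cdot a_{i+1}$. The matrix $a_i\cdot a_{i+1}$ is unimodular: after bringing $(\Sigma;c_i,c_{i+1})$ into the standard form of Figure~\ref{fig:StandardDiagram}, curves not lying in $L_i\cap L_{i+1}$ are dual with intersection $\pm 1$, and any further change of basis on $a_i$ or $a_{i+1}$ multiplies this matrix only by unimodular factors. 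Consequently $R_i=-(e_i\cdot a_{i+1})(a_i\cdot a_{i+1})^{-1}$, so $e_{i+1}=e_i+R_ia_i$, as claimed.

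The final sequence of handleslides in Proposition~\ref{prop:AlgoCGPC} sends $c_1'$ (which spans $L_1$, since it arose from $c_1$ by self-slides) back to $c_1$; these slides modify each arc of the current $e'$ only by elements of $\L_1$. Therefore $[\phi(e)]\equiv[e_{n+1}]\pmod{\L_1}$, and the action of $\phi_*$ on $\J_1/\L_1\cong H_1(\Sigma_1,\partial\Sigma)$ in the basis $e$ is determined by the $e$--coordinates of $e_{n+1}$. With respect to the chosen basis of $\J_1$ that extends $e$ by a basis of $\L_1$, the coordinate matrix of $e$ is the identity $I_r$ stacked above zero rows; its transpose $e^t=(I_r\mid 0)$ then extracts the top $r\times r$ block of the coordinate matrix of $e_{n+1}$, which is exactly the matrix of $\phi_*$ in the basis $e$. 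This gives $R=e^te_{n+1}$.

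The main obstacle is verifying that $a_i\cdot a_{i+1}$ is unimodular at each step, so that the recursion is defined over $\Z$ and realizable by genuine handleslides. This reduces to the standard fact that every Heegaard diagram $(\Sigma;c_i,c_{i+1})$ of $\partial_+X_i$ can, up to handleslides, be put in the normal form depicted in Figure~\ref{fig:StandardDiagram}.
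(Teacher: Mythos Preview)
Your proof is correct and follows essentially the same approach as the paper: both translate the Castro--Gay--Pinz\'on-Caicedo algorithm into $H_1(\Sigma,\partial\Sigma)$ and solve for the slide coefficients by imposing $e_{i+1}\cdot a_{i+1}=0$. Your version is more detailed, making explicit the unimodularity of $a_i\cdot a_{i+1}$ (which the paper leaves implicit), the treatment of the final handleslides returning $c_1'$ to $c_1$, and the fact that the complementary basis vectors of $\J_1$ should be taken in $\L_1$ for the formula $R=e^te_{n+1}$ to extract the correct block; these are useful clarifications rather than departures from the paper's argument.
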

\begin{proof}
 Following the algorithm of Proposition~\ref{prop:AlgoCGPC}, we define families of arcs and curves $e_i$ on $\Sigma$, disjoint from $c_i$, that define bases of $H_1(\Sigma_i,\partial\Sigma)$, by $e_1=e$ and $e_{i+1}=e_i+r_ia_i$, where the $r_i$ are matrices to compute. Since $e_i$ is disjoint from $c_i$, we have
 $0=e_{i+1}\cdot a_{i+1}=e_i\cdot a_{i+1}+r_i(a_i\cdot a_{i+1}),$
 so that $r_i=R_i$. Now $e_{n+1}$ expresses $\phi(e)$ in the fixed basis of $\J_1$. Multiply by $e^t$ to get it in the basis $e$ of $H_1(\Sigma_1,\partial\Sigma)$.
\end{proof}

The following lemma gives the homology of a 3--manifold from an open book decomposition. 

\begin{lemma}
 Let $M$ be a 3--manifold with an open book $(S,\phi)$. The homology of $M$ is the homology of the complex:
 $$0 \to \Z^s \xrightarrow{\scriptstyle{0}} H_1(S,\partial S) \xrightarrow{\xi} H_1(S)\xrightarrow{\scriptstyle{0}} \Z^s \to 0,$$
 where $\xi([\mu])=[\phi(\mu)-\mu]$ and $s$ is the number of components of $S$.
\end{lemma}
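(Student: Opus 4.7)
The plan is to build a CW structure on $M$ adapted to the open book and extract the complex from the resulting short exact sequence of chain complexes.

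First, I fix a CW structure on $S$ with $\partial S$ as a subcomplex and every $0$-cell lying on $\partial S$; this is possible because no component of $S$ is closed. By cellular approximation I may assume $\phi$ is cellular with $\phi_*=\mathrm{id}$ on $C_*(\partial S)$. I then realize $M$ as $(S\times[0,1])/{\sim}$, with $(x,1)\sim(\phi(x),0)$ and $(y,t)\sim(y,t')$ for $y\in\partial S$ and $t,t'\in[0,1]$. This equips $M$ with a CW structure with two kinds of cells: for every cell $\sigma$ of $S$, a \emph{base} cell (the image of $\sigma\times\{0\}$); and for every cell $\sigma$ not contained in $\partial S$, a \emph{product} cell $\sigma\times[0,1]$ of one higher dimension. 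Cells of $\partial S$ produce no product cell because the collapse $(y,t)\sim(y,t')$ makes them degenerate.

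Next, I observe that the base cells form a subcomplex $C_*(S)\subset C_*(M)$ and that the quotient is canonically identified with the shifted relative chain complex $C_{*-1}(S,\partial S)$, since the product cells are indexed by interior cells of $S$, which form a basis of $C_*(S,\partial S)$. This produces a short exact sequence of chain complexes
\[0\to C_*(S)\to C_*(M)\to C_{*-1}(S,\partial S)\to 0,\]
whose long exact sequence in homology reads
\[\cdots\to H_n(S)\to H_n(M)\to H_{n-1}(S,\partial S)\xrightarrow{\delta} H_{n-1}(S)\to H_{n-1}(M)\to\cdots.\]
A direct chain-level computation identifies $\delta$ with $\xi$: lifting a relative cycle $\mu\in Z_{n-1}(S,\partial S)$ to the product cell $\mu\times I\in C_n(M)$, one gets $\partial_M(\mu\times I)=\pm(\phi_*\mu-\mu)$ plus product-cell terms coming from $\partial^{\mathrm{rel}}\mu$, which vanish since $\mu$ is a relative cycle; hence $\delta[\mu]=[\phi_*\mu-\mu]=\xi[\mu]$ up to sign.

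Finally, I plug in the standard homology of a surface whose components all have nonempty boundary: $H_0(S)=\Z^s$, $H_2(S)=0$, $H_0(S,\partial S)=0$, and $H_2(S,\partial S)=\Z^s$. Reading off the long exact sequence then yields $H_3(M)=\Z^s$, $H_2(M)=\ker\xi$, $H_1(M)=H_1(S)/\mathrm{im}\,\xi$, and $H_0(M)=\Z^s$, which is precisely the homology of the claimed complex.

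The main obstacle should be setting up the CW structure on $M$ rigorously: one must arrange $\phi$ to be cellular (possibly after subdividing $S$), keep track of which product cells survive the $\partial S$-collapse, and get the signs right in the formula for $\partial_M$ on product cells. Past that bookkeeping, every step is formal.
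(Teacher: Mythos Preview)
Your argument is correct and arrives at exactly the same long exact sequence as the paper, with the same identification of the connecting homomorphism. The difference is purely in how you reach the isomorphism $H_*(M,S)\cong H_{*-1}(S,\partial S)$: the paper obtains it in two lines by applying the exact sequence of the triple $\big(S\times[0,1],\,\partial(S\times[0,1]),\,S\times\{0\}\big)$ together with the excision equivalences $\big(S\times[0,1],\partial(S\times[0,1])\big)\sim(M,S)$ and $\big(\partial(S\times[0,1]),S\times\{0\}\big)\sim(S,\partial S)$, whereas you build an explicit CW structure on $M$ and read off the short exact sequence of cellular chain complexes. Your route is a bit heavier on bookkeeping (cellular approximation of $\phi$, tracking which product cells survive the collapse over $\partial S$), while the paper's avoids all of that; on the other hand, your version makes the chain-level origin of $\xi$ completely transparent and is essentially a Wang exact sequence adapted to the open book. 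One small point worth making explicit for the reader: the conclusions $H_0(M)\cong\Z^s$ and $H_3(M)\cong\Z^s$ use that $\phi$ restricts to the identity on $\partial S$ and that every component of $S$ has nonempty boundary, so $\phi$ preserves each component of $S$.
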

\begin{proof}
 Consider the triple $\big(S\times[0,1],\partial(S\times[0,1]),S\times\{0\}\big)$. Since $S\times[0,1]$ deformation retracts on $S\times\{0\}$, the homology of the corresponding pair is trivial. Moreover, there are equivalences $\big(S\times[0,1],\partial(S\times[0,1])\big)\sim(M,S)$ and $\big(\partial(S\times[0,1]),S\times\{0\}\big)\sim(S,\partial S)$. Finally $H_*(M,S)\cong H_{*-1}(S,\partial S)$. Hence the long exact sequence of the pair $(M,S)$ gives
 $$0 \to H_2(M) \to H_1(S,\partial S) \xrightarrow{\xi} H_1(S) \to H_1(M) \to 0.$$
 Finally, given an arc $a$ properly embedded in $(S,\partial S)$, $a\times[0,1]$ is a relative 2--cycle for the pair \mbox{$\big(S\times[0,1],\partial(S\times[0,1])\big)\sim(M,S)$} whose boundary is $-a\cup\phi(a)$.
\end{proof}

To compute the homology of $\partial X$, we need to understand the homology classes $\phi(\mu)-\mu$ in $H_1(\Sigma_1)$. We keep the notations defined before and in Proposition~\ref{prop:monodromy}.

\begin{proposition}
 Define families $\varepsilon_i$ in $H_1(\Sigma)$ as follows: $\varepsilon_1=0$ and $\varepsilon_{i+1}=\varepsilon_i+R_ia_i$. Fix a basis $b_L$ of $\L_1\cong L_1$ and complete it into a basis $(b_L,b)$ of $J_1$. Write $e$ in the basis $(b_L,e)$ of $\J_1$ and $\varepsilon_{n+1}$ in the basis $(b_L,b)$ of $J_1$. 
 The homology of $\partial X$ is the homology of the complex
 $$0 \to \Z^s \xrightarrow{\scriptstyle{0}} \frac{\J_1}{\L_1} \xrightarrow{\xi} \frac{J_1}{L_1} \xrightarrow{\scriptstyle{0}} \Z^s \to 0,$$
 where $s$ is the number of components of $\Sigma$ and $\xi$ is given in the bases $e$ and $b$ by the matrix 
 $S=e^t\varepsilon_{n+1}$.
\end{proposition}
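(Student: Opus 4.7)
The plan is to apply the preceding lemma on open-book homology to $(M,S,\phi)=(\partial X,\Sigma_1,\phi)$, where $\phi$ is the monodromy of the open book induced on $\partial X$, and then to make the connecting map $\xi$ explicit using Proposition~\ref{prop:monodromy}. Combining that lemma with the earlier identifications $H_1(\Sigma_1,\partial\Sigma)\cong\J_1/\L_1$ and $H_1(\Sigma_1)\cong J_1/L_1$ immediately yields the complex of the statement, so the only task is to compute the matrix of $\xi([\mu])=[\phi(\mu)-\mu]$ in the bases $e$ of $\J_1/\L_1$ and $b$ of $J_1/L_1$.

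Next I would use that, by Proposition~\ref{prop:monodromy}, $\phi(e)$ is represented on $\Sigma$ by the family $e_{n+1}$. Telescoping the recursion $e_{i+1}=e_i+R_ia_i$ gives $\phi(e)-e=e_{n+1}-e_1=\sum_{i=1}^{n}R_ia_i=\varepsilon_{n+1}$, so $\xi([e])=[\varepsilon_{n+1}]$ in $J_1/L_1$. Since $\varepsilon_{n+1}$ is by definition a $\Z$-linear combination of classes of closed curves $a_i$, it lies in $H_1(\Sigma)$; and since $e_{n+1}=e+\varepsilon_{n+1}$ and $e$ are both disjoint from $c_1$ on $\Sigma$, the class $\varepsilon_{n+1}$ has trivial algebraic intersection with every curve of $c_1$, hence lies in $J_1$. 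Its coordinates in the basis $(b_L,b)$ of $J_1$ are therefore well defined, and the $b$-block of these coordinates represents the class $\xi([e])$ in $J_1/L_1$.

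Finally, I would unwind the matrix formula. In the basis $(b_L,e)$ of $\J_1$, the family $e$ has coordinate matrix $\bigl(\begin{smallmatrix}0\\I\end{smallmatrix}\bigr)$, so $e^t=\bigl(\begin{smallmatrix}0&I\end{smallmatrix}\bigr)$. Left-multiplying the coordinate matrix of $\varepsilon_{n+1}$ in the basis $(b_L,b)$ of $J_1$ by $e^t$ selects its last block, namely the $b$-coordinates, which are by construction the columns of the matrix of $\xi$ in the prescribed bases. This yields $S=e^t\varepsilon_{n+1}$, as claimed.

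The one genuinely delicate point is the middle step: verifying that $\varepsilon_{n+1}$ lifts canonically to an element of $J_1\subset H_1(\Sigma)$, so that projecting mod $L_1$ coincides, on this chain, with the $\L_1$-quotient dictated by $\xi$. Once that is in place, the argument is a mechanical combination of the open-book homology lemma, Proposition~\ref{prop:monodromy}, and an elementary change of basis between $(b_L,e)$ on the $\J_1$ side and $(b_L,b)$ on the $J_1$ side.
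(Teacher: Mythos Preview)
Your proposal is correct and follows essentially the same approach as the paper's proof, which is extremely terse: it simply observes that the $\varepsilon_i$ represent the classes of $e_i-e$ in $H_1(\Sigma)$, obtained by running the same algorithm as in Proposition~\ref{prop:monodromy} but reading each added curve in $H_1(\Sigma)$ rather than in $H_1(\Sigma,\partial\Sigma)$. You have filled in the details the paper omits --- the telescoping sum, the verification that $\varepsilon_{n+1}\in J_1$, and the block-matrix interpretation of $e^t\varepsilon_{n+1}$ --- all of which are correct.
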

\begin{proof}
 The $\varepsilon_i$ represent the homology classes in $H_1(\Sigma)$ of the $e_i-e$. Throughout the algorithm of Proposition~\ref{prop:AlgoCGPC}, they are added curves as in Proposition~\ref{prop:monodromy}, but we now view the result in $H_1(\Sigma)$ at each step. 
\end{proof}

\section{Sample calculations} \label{sec:Exs}

\subsection*{Example 1}

\begin{figure}[htb]
\begin{center}
\begin{tikzpicture}[scale=0.7]
 \foreach \s in {-1,1} {\draw (-1,4) .. controls +(0,0.5*\s) and +(0,0.5*\s) .. (1,4);}
 \foreach \s/\d in {-1/,1/dashed} {\draw[\d] (-1,-4) .. controls +(0,0.5*\s) and +(0,0.5*\s) .. (1,-4);}
 \foreach \s in {-1,1} {\draw (\s,-4) .. controls +(0,1) and +(0,-2) .. (3*\s,0) .. controls +(0,2) and +(0,-1) .. (\s,4);}
 \newcommand{\trou}{(-1,0) ..controls +(0.5,-0.4) and +(-0.5,-0.4) .. (1,0) (-0.8,-0.1) ..controls +(0.6,0.3) and +(-0.6,0.3) .. (0.8,-0.1)}
 \draw[xshift=-1cm,rotate=-90] \trou;
 \draw[xshift=1cm,rotate=-90] \trou;
 \foreach \s/\n/\c in {1/red,-1/blue} {
 \draw[\c] (\s,0) ellipse (0.8 and 1.4);}
 \draw[red,->] (1.56,1) -- (1.5,1.1) node[right] {$\scriptstyle{\alpha_2}$};
 \draw[blue,->] (-0.3,1.4) node {$\scriptstyle{\beta_2}$} (-0.44,1) -- (-0.5,1.1);
 \foreach \s/\d in {-1/,1/dashed} {
 \draw[red,\d] (-1.3,0) .. controls +(-0.5,0.4*\s) and +(0.5,0.4*\s) .. (-3,0);}
 \draw[red,->] (-2.5,-0.25) node[below] {$\scriptstyle{\alpha_1}$} -- (-2.6,-0.22);
 \foreach \s/\d in {-1/,1/dashed} {
 \draw[blue,\d] (1.15,0) .. controls +(0.5,0.4*\s) and +(-0.5,0.4*\s) .. (3,0);}
 \draw[blue,->] (2.5,-0.25) node[below] {$\scriptstyle{\beta_1}$} -- (2.6,-0.22);
 \foreach \s/\d in {-1/,1/dashed} {
 \draw[green,\d] (-0.85,0) .. controls +(0.5,0.3*\s) and +(-0.5,0.3*\s) .. (0.7,0);}
 \draw[green,->] (-0.5,-0.15) node[below] {$\scriptstyle{\gamma_1}$} -- (-0.4,-0.2);
 \draw[green] (2.3,2) .. controls +(-0.8,0.4) and +(1,0) .. (0,2.5) .. controls +(-1.5,0) and +(0,1.5) .. (-2.2,0) .. controls +(0,-1.5) and +(-1.5,0) .. (0,-2.5) .. controls +(1.5,0) and +(0,-1.5) .. (2.2,-0.5) .. controls +(0,0.5) and +(0.4,-0.1) .. (1.1,0.2);
 \draw[dashed,green] (1.1,0.2) .. controls +(0.6,0.3) and +(-0.4,0.1) .. (2.97,0.4);
 \draw[green] (2.97,0.4) .. controls +(-0.5,0.2) and +(0.5,0.1) .. (0.96,0.7);
 \draw[dashed,green] (0.96,0.7) .. controls +(0.4,0.6) and +(-0.6,-0.3) .. (2.3,2);
 \draw[green,->] (-1,2.3) -- (-1.1,2.25) node[above] {$\scriptstyle{\gamma_2}$};
 \draw[purple,->] (0,-3) node[right] {$\scriptstyle{e}$} -- (0,3.6) (0,-4.4) -- (0,-3);
\end{tikzpicture}
\end{center}
\caption{A trisection diagram of a disk bundle over $S^2$ with Euler number $-2$}
\label{figex1}
\end{figure}
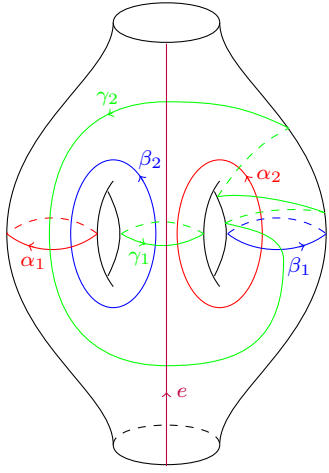

The trisection diagram $(\Sigma;\alpha,\beta,\gamma)$ in Figure~\ref{figex1} is a diagram for a disk bundle $X$ over $S^2$ with Euler number $-2$ obtained by Castro--Gay--Pinz\'on-Caicedo in \cite[Section~5.1]{CGPC1}.
In this example, all homology groups have coefficients in $\Z$. We first compute the (relative) homology and intersection form of $X$ from this diagram. 

In $H_1(\Sigma)=\langle\alpha_1,\beta_1,\alpha_2,\beta_2,\gamma_1\rangle$, we have $L_\alpha=\langle\alpha_1,\alpha_2\rangle$, $L_\beta=\langle\beta_1,\beta_2\rangle$, $L_\gamma=\langle\gamma_1,\alpha_2-2\beta_1+\beta_2\rangle$. All pairwise intersections of these subgroups are trivial. The homology of $X$ is the homology of the complex
$$0\to L_\alpha\oplus L_\beta\oplus L_\gamma \to H_1(\Sigma)\xrightarrow{\scriptstyle{0}} \Z,$$
giving $H_1(X)=0$, $H_2(X)\cong\Z$ and $H_3(X)=0$.

In $H_1(\Sigma,\partial\Sigma)=\langle\alpha_1,\beta_1,\alpha_2,\beta_2,e\rangle$, we have $\J_\alpha=\langle\alpha_1,\alpha_2,e\rangle$, $\J_\beta=\langle\beta_1,\beta_2,e\rangle$, $\J_\gamma=\langle\alpha_1-\beta_1,\alpha_2-2\beta_1+\beta_2,e-\beta_2\rangle$. Pairwise intersections are $\J_\alpha\cap\J_\beta=\langle e\rangle$, $\J_\beta\cap\J_\gamma=\langle e-\beta_2\rangle$, $\J_\gamma\cap\J_\alpha=\langle \alpha_1-\alpha_2-e\rangle$. The relative homology of $X$ is the homology of the complex
$$0\to\Z\xrightarrow{\scriptstyle{0}}\oplus_{\nu\neq\nu'}\J_\nu\cap\J_{\nu'} \to \oplus_{\nu}\J_\nu \to H_1(\Sigma,\partial\Sigma)\xrightarrow{\scriptstyle{0}} \Z\to0,$$
where $\nu,\nu'\in\{\alpha,\beta,\gamma\}$, giving $H_1(X,\partial X)=0$, $H_2(X,\partial X)\cong\Z$ and $H_3(X,\partial X)=0$.

A generator of $H_2(X)$ is given by $(\alpha_2,\beta_2-\beta_1,-\gamma_2)\in L_\alpha\oplus L_\beta\oplus L_\gamma$, while a generator of $H_2(X,\partial X)$ is $(\alpha_1,-\beta_1,\beta_1-\alpha_1)\in\J_\alpha\oplus\J_\beta\oplus\J_\gamma$. On these generators, the value of the intersection form is $1$.

We now consider the monodromy of the open book on $\partial X$. We set $a_1=(\alpha_1,\alpha_2)$, $a_2=(\beta_1,\beta_2)$ and $a_3=(\gamma_1,\gamma_2)$. Starting with $R_0=0$ and $e_1=e$, we compute $R_1=\begin{pmatrix} 0&0 \end{pmatrix}$, so that $e_2=e$, then $R_2=\begin{pmatrix} 0&-1 \end{pmatrix}$ and $e_3=e-\beta_2$, finally $R_3=\begin{pmatrix} -2&1 \end{pmatrix}$ and $e_4=e-2\alpha_1+\alpha_2$. This gives $R=\begin{pmatrix} 1 \end{pmatrix}$ and shows that the action of the monodromy on $H_1(\Sigma_1,\partial\Sigma_1)$ is trivial. Now starting with $\varepsilon_1=0$, we get $\varepsilon_2=0$, $\varepsilon_3=-\beta_2$ and $\varepsilon_4=\alpha_2-2\alpha_1-2\zeta$. Hence $S=\begin{pmatrix} -2 \end{pmatrix}$. Finally, the homology of $\partial X$ is the homology of the complex
$$0 \to \Z \xrightarrow{\scriptstyle{0}} \langle e\rangle \xrightarrow{-2}\langle \zeta\rangle \xrightarrow{\scriptstyle{0}} \Z \to 0,$$
giving $H_1(\partial X)=\Z/2\Z$ and $H_2(\partial X)=0$.

\subsection*{Example 2}

\begin{figure}[htb]
\begin{center}
\begin{tikzpicture}[scale=0.7]
 \foreach \s in {-1,1} {\draw (7,2*\s) .. controls +(-6,0) and +(0,2*\s) .. (-3,0);}
 \draw (7,0) ellipse (0.5 and 2);
 \newcommand{\trou}{(-1,0) ..controls +(0.5,-0.4) and +(-0.5,-0.4) .. (1,0) (-0.8,-0.1) ..controls +(0.6,0.3) and +(-0.6,0.3) .. (0.8,-0.1)}
 \draw \trou;
 \draw[xshift=4cm] \trou;
 \foreach \s/\d in {-1/,1/dashed} {\draw[red,\d] (0,-0.3) .. controls +(0.3*\s,-0.5) and +(0.3*\s,0.5) .. (0,-1.8);}
 \draw[red,->] (0,-1.8) node[below] {$\scriptstyle{\alpha}$} (-0.2,-1.3) -- (-0.17,-1.4);
 \foreach \s/\d in {-1/,1/dashed} {\draw[\d] (4,-0.3) .. controls +(0.3*\s,-0.5) and +(0.3*\s,0.5) .. (4,-2);}
 \draw[->] (4,-2) node[below] {$\scriptstyle{x}$} (3.8,-1.5) -- (3.83,-1.6);
 \draw[blue] (0,0) ellipse (1.3 and 0.7);
 \draw[blue,->]  (0.1,0.7) -- (0,0.7);
 \draw[blue] (0.1,1) node {$\scriptstyle{\beta}$};
 \draw[->]  (4.1,0.7) -- (4,0.7) node[below] {$\scriptstyle{y}$};
 \draw (4,0) ellipse (1.3 and 0.7);
 \draw[green] (2,1.9) .. controls +(-1.5,-1) and +(0,1.5) .. (-2,0) .. controls +(0,-1) and +(-1,0) .. (0,-1).. controls +(0.5,0) and +(0,-0.5) .. (0.7,-0.2);
 \draw[green,dashed] (0.7,-0.2) .. controls +(2,-1) and +(0,-1.5) .. (6,0) .. controls +(0,1.5) and +(2,-1) .. (2,1.9);
 \draw[green,->] (-2,0.1) -- (-2,0) node[left] {$\scriptstyle{\gamma}$};
 \foreach \s in {-1,1} {\draw[purple] (6.6,1.2*\s) .. controls +(-2,0) and +(0,1.5*\s) .. (2,0);}
 \draw[purple,->] (6.4,1.2) -- (6.3,1.2) node[above] {$\scriptstyle{e'}$};
 \draw[purple] (6.5,0.2) .. controls +(-1,0) and +(0.2,0.5) .. (4.8,-0.1) (6.5,-0.4) -- (7.5,-0.4);
 \draw[purple,dashed] (4.8,-0.1) .. controls +(0.2,-0.5) and +(-1,0) .. (6.5,-0.4);
 \draw[purple,->] (6.4,0.2) -- (6.3,0.2) node[above] {$\scriptstyle{e}$};
\end{tikzpicture}
\end{center}
\caption{A trisection diagram}
\label{figex2}
\end{figure}
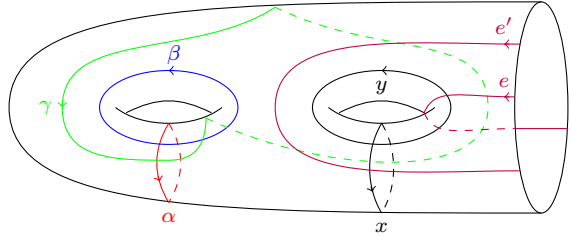

Let $X$ be the $4$--manifold defined by the trisection diagram $(\Sigma;\alpha,\beta,\gamma)$ in Figure~\ref{figex2}.

In $H_1(\Sigma)=\langle\alpha,\beta,x,y\rangle$, we have $L_\alpha=\langle\alpha\rangle$, $L_\beta=\langle\beta\rangle$, $L_\gamma=\langle -\alpha+\beta+y\rangle$. Pairwise intersections are trivial and we get $H_1(X;\Z)=\langle x\rangle\cong\Z$, $H_2(X;\Z)=0$ and $H_3(X;\Z)=0$.

In $H_1(\Sigma,\partial\Sigma)=\langle\alpha,\beta,e,e'\rangle$, we have $\J_\alpha=\langle\alpha,e,e'\rangle$, $\J_\beta=\langle\beta,e,e'\rangle$, $\J_\gamma=\langle\beta-\alpha,\alpha+e,e'\rangle$. It gives $H_1(X,\partial X)=0$, $H_2(X,\partial X)=0$ and $H_3(X,\partial X)=\J_\alpha\cap\J_\beta\cap\J_\gamma\cong\Z$.

We define $\varphi: \Z[\pi_1(X,*)]\to\Zt$ by $\varphi(x)=t$. Let us compute the associated twisted homology and torsion. Fix a lift $\tilde *$ of the basepoint $*$. For $\zeta\in\pi_1(\Sigma,*)$, we denote $\tilde\zeta$ the lift of~$\zeta$ starting at $\tilde *$. Since $\gamma=\alpha^{-1}xyx^{-1}\alpha\beta\alpha^{-1}$ in $\pi_1(\Sigma,*)$, we have $\tilde\gamma=-\tilde\alpha+\tilde\beta+t\tilde y$ in $H_1^\varphi(\Sigma,*)$. Hence, in $H_1^\varphi(\Sigma,*)=\langle\tilde\alpha,\tilde\beta,\tilde x,\tilde y\rangle$, we have $L_\alpha^\varphi=\langle\tilde\alpha\rangle$, $L_\beta^\varphi=\langle\tilde\beta\rangle$, $L_\gamma^\varphi=\langle -\tilde\alpha+\tilde\beta+t\tilde y\rangle$. From this complex:
$$0\to L_\alpha^\varphi\oplus L_\beta^\varphi\oplus L_\gamma ^\varphi\to H_1^\varphi(\Sigma,*)\to H_0^\varphi(*)\to0$$
we get $H_0^\varphi(X;\Zt)\cong\Zt/(t-1)\cong\Z$ and $H_i^\varphi(X;\Zt)=0$ for $i>0$. It implies that the homology of $X$ with coefficients in $\Q(t)$ is trivial, so that the torsion won't depend on the choice of a homology basis. 
Set $c_2=(\tilde\alpha,\tilde\beta,\tilde\gamma)$, $c_1=(\tilde\alpha,\tilde\beta,\tilde x,\tilde y)$ and $c_0=(\tilde *)$ as complex bases for the above complex and $b_1=(\tilde\alpha,\tilde\beta,\tilde\gamma)$ and $b_0=((t-1)\tilde *)$ as bases of the images of the boundary map. Then the torsion is given by
$$\tau^\varphi(X)=\left[\frac{b_1}{c_2}\right]^{-1}\left[\frac{b_1b_0}{c_1}\right]\left[\frac{b_0}{c_0}\right]^{-1}=-t(t-1)^{-1}\in\Q(t)/\Zt.$$

Finally, we consider the monodromy of the open book on~$\partial X$. We set $a_1=\alpha$, $a_2=\beta$ and $a_3=\gamma=e'-\alpha+\beta$. Starting with $R_0=0$ and $e_1=\begin{pmatrix} e \\ e' \end{pmatrix}$, we get $R_1=\begin{pmatrix} 0 \\ 0 \end{pmatrix}$ and $e_2=e_1$, then $R_2=\begin{pmatrix} 1 \\ 0 \end{pmatrix}$ and $e_3=\begin{pmatrix} e+\beta \\ e' \end{pmatrix}$, finally $R_3=\begin{pmatrix} -1 \\ 0 \end{pmatrix}$ and $e_4=\begin{pmatrix} e-e'+\alpha \\ e' \end{pmatrix}$. Utilizing the basis $(e,e',\alpha)$ of $\J_1$, we obtain 
$$e^te_4=\begin{pmatrix} 1 & 0 & 0 \\ 0 & 1 & 0 \end{pmatrix}\begin{pmatrix} 1 & 0 \\ -1 & 1 \\ 1 & 0 \end{pmatrix}=\begin{pmatrix} 1 & 0 \\ -1 & 1 \end{pmatrix}$$
as the matrix giving the action of the monodromy in the basis $(e,e')$ of $H_1(\Sigma_1,\partial\Sigma)$.

To get the homology of $\partial X$, we start with $\varepsilon_1=(0,0)$ and the computation gives $\varepsilon_2=(0,0)$, $\varepsilon_3=(\beta,0)$, $\varepsilon_4=(\alpha-y,0)$. It follows that the homology of $\partial X$ is the homology of the complex 
$$0\to\Z\xrightarrow{\scriptstyle 0}\langle e,e'\rangle\xrightarrow{\xi}\langle x,y\rangle\xrightarrow{\scriptstyle 0}\Z\to0,$$
where $\xi(e)=-y$ and $\xi(e')=0$. Thus $H_1(\partial X)\cong H_2(\partial X)\cong\Z$.

\def\cprime{$'$}
\providecommand{\bysame}{\leavevmode\hbox to3em{\hrulefill}\thinspace}
\providecommand{\MR}{\relax\ifhmode\unskip\space\fi MR }
\providecommand{\MRhref}[2]{%
  \href{http://www.ams.org/mathscinet-getitem?mr=#1}{#2}
}
\providecommand{\href}[2]{#2}


\begin{thebibliography}{CGPC18b}

\bibitem[Bla57]{Bl57}
Richard~C. Blanchfield, \emph{Intersection theory of manifolds with operators
  with applications to knot theory}, Annals of Mathematics (2) \textbf{65}
  (1957), 340--356.

\bibitem[CGPC18a]{CGPC1}
Nickolas~A. Castro, David~T. Gay, and Juanita Pinz{\'o}n-Caicedo,
  \emph{Diagrams for relative trisections}, Pacific Journal of Mathematics
  \textbf{294} (2018), no.~2, 275--305.

\bibitem[CGPC18b]{CGPC2}
\bysame, \emph{Trisections of 4--manifolds with boundary}, Proceedings of the
  National Academy of Sciences \textbf{115} (2018), no.~43, 10861--10868.

\bibitem[FKSZ18]{FKSZ}
Peter Feller, Michael Klug, Trenton Schirmer, and Drew Zemke, \emph{Calculating
  the homology and intersection form of a 4-manifold from a trisection
  diagram}, Proceedings of the National Academy of Sciences \textbf{115}
  (2018), no.~43, 10869--10874.

\bibitem[FM19]{FM}
Vincent Florens and Delphine Moussard, \emph{Torsions and intersection forms of
  4--manifolds from trisection diagrams}, Canadian Journal of Mathematics
  (2019), 1--23.

\bibitem[GK16]{GayKirby}
David~T. Gay and Robion Kirby, \emph{Trisecting 4--manifolds}, Geometry \&
  Topology \textbf{20} (2016), no.~6, 3097--3132.

\bibitem[IN20]{IN}
Gabriel Islambouli and Patrick Naylor, \emph{Multisections of 4--manifolds},
  arXiv:2010.03057, 2020.

\bibitem[Mil66]{Mi66}
John Milnor, \emph{Whitehead torsion}, Bulletin of the American Mathematical
  Society \textbf{72} (1966), 358--426.

\bibitem[Rei39]{Re39}
Kurt Reidemeister, \emph{Durchschnitt und {S}chnitt von {H}omotopieketten},
  Monatshefte f{\"u}r Mathematik und Physik \textbf{48} (1939), 226--239.

\bibitem[Tan21]{Tanimoto}
Hokuto Tanimoto, \emph{Homology of relative trisection and its application},
  arXiv:2101.11493, 2021.

\bibitem[Tur01]{Tu01}
Vladimir Turaev, \emph{Introduction to combinatorial torsions}, Lectures in
  Mathematics ETH Z\"urich, Birkh\"auser Verlag, Basel, 2001, Notes taken by
  Felix Schlenk.

\end{thebibliography}
\end{document}